\newtheorem{theorem}{Theorem}[section]
\newtheorem{condition}{Condition}[section]
\newtheorem{lemma}{Lemma}[section]
\newtheorem{assumption}{Assumption}
\newtheorem{remark}{Remark}[section]
\newtheorem{corollary}{Corollary}[section]
\newenvironment{proof}[1][Proof]{\noindent \textbf{#1.} }{\  \rule{0.5em}{0.5em}}
\begin{document}

\title{Optimal Uniform Convergence Rates for \\Sieve Nonparametric Instrumental Variables Regression\thanks{%
Support from the Cowles Foundation is gratefully acknowledged. We thank conference participants of SETA2013 in Seoul and AMES2013 in Singapore for useful comments. Any errors are the
responsibility of the authors.}}
\author{Xiaohong Chen\thanks{%
Cowles Foundation for Research in Economics, Yale
University: \texttt{xiaohong.chen@yale.edu}} \ and Timothy M. Christensen%
\thanks{%
Department of Economics, Yale University: \texttt{timothy.christensen@yale.edu}}}
\date{First version January 2012; Revised August 2013}
\maketitle

\begin{abstract}
\noindent We study the problem of nonparametric regression when the regressor is endogenous, which is an important nonparametric instrumental variables (NPIV) regression in econometrics and a difficult ill-posed inverse problem with unknown operator in statistics. We first establish a general upper bound on the sup-norm (uniform) convergence rate of a sieve estimator, allowing for endogenous regressors and weakly dependent data. This result leads to the optimal sup-norm convergence rates for spline and wavelet least squares regression estimators under weakly dependent data and heavy-tailed error terms. This upper bound also yields the sup-norm convergence rates for sieve NPIV estimators under i.i.d. data: the rates coincide with the known optimal $L^2$-norm rates for severely ill-posed problems, and are power of $\log(n)$ slower than the optimal $L^2$-norm rates for mildly ill-posed problems. We then establish the minimax risk lower bound in sup-norm loss, which coincides with our upper bounds on sup-norm rates for the spline and wavelet sieve NPIV estimators. This sup-norm rate optimality provides another justification for the wide application of sieve NPIV estimators. Useful results on weakly-dependent random matrices are also provided.

\bigskip \noindent \textbf{JEL Classification:} C13, C14, C32

\medskip \noindent \textbf{Key words and phrases:} Nonparametric instrumental variables; Statistical ill-posed inverse problems; Optimal uniform convergence rates; Weak dependence; Random matrices; Splines; Wavelets
\end{abstract}
\thispagestyle{empty}
\setcounter{page}{0}

\newpage

\section{Introduction}

In economics and other social sciences one frequently encounters the relation
\begin{equation}\label{basic}
 Y_{1i} = h_0(Y_{2i}) + \epsilon_i
\end{equation}
where $Y_{1i}$ is a response variable, $Y_{2i}$ is a predictor variable, $h_0$ is an unknown structural function of interest, and $\epsilon_i$ is an error term. However, a latent external mechanism may ``determine'' or ``cause'' $Y_{1i}$ and $Y_{2i}$ simultaneously, in which case the conditional mean restriction $E[\epsilon_i |Y_{2i}] = 0$ fails and $Y_{2i}$ is said to be \emph{endogenous}.\footnote{In a canonical example of this relation, $Y_{1i}$ may be the hourly wage of person $i$ and $Y_{2i}$ may include the education level of person $i$. The latent ability of person $i$ affects both $Y_{1i}$ and $Y_{2i}$. See \cite{BlundellPowell} for other examples and discussions of endogeneity in semi/nonparametric regression models.} When the regressor $Y_{2i}$ is endogenous one cannot use standard nonparametric regression techniques to consistently estimate $h_0$. In this instance one typically assumes that there exists a vector of \emph{instrumental variables} $X_i$ such that $E[\epsilon_i |X_i] = 0$ and for which there is a nondegenerate relationship between $X_i$ and $Y_{2i}$. Such a setting permits estimation of $h_0$ using nonparametric instrumental variables (NPIV) techniques based on a sample $\{(X_i,Y_{1i},Y_{2i})\}_{i=1}^n$. In this paper we assume that the data is strictly stationary in that $(X_i,Y_{1i},Y_{2i})$ has the same (unknown) distribution $F_{X,Y_1,Y_2}$ as that of $(X,Y_{1},Y_{2})$ for all $i$.\footnote{The subscript $i$ denotes either the individual $i$ in a cross-sectional sample or the time period $i$ in a time-series sample. Since the sample is strictly stationary we sometimes drop the subscript $i$ without confusion.}

NPIV estimation has been the subject of much research in recent years, both because of its practical importance to applied economics and its prominent role in the literature on linear ill-posed inverse problems with unknown operators. In many economic applications the joint distribution $F_{X,Y_2}$ of $X_i$ and $Y_{2i}$ is unknown but is assumed to have a continuous density. Therefore the conditional expectation operator $Th(\cdot)=E[h(Y_{2i})|X_i = \cdot ]$ is typically unknown but compact. Model (\ref{basic}) with $E[\epsilon_i |X_i] = 0$ can be equivalently written as
\begin{equation} \label{knownT} \begin{array}{rcl}
 Y_{1i} & = & Th_0(X_i) + u_i \\
 E[u_{i}|X_i] &= & 0 \end{array}
\end{equation}
where $u_i = h_0(Y_{2i}) - Th_0(X_i) + \epsilon_i$. Model (\ref{knownT}) is called the reduced-form NPIV model if $T$ is assumed to be unknown and the nonparametric indirect regression (NPIR) model if $T$ is assumed to be known. Let $\widehat{E}[Y_{1}|X= \cdot]$ be a consistent estimator of $E[Y_{1}|X= \cdot]$. Regardless of whether the compact operator $T$ is unknown or known, nonparametric recovery of $h_0$ by inversion of the conditional expectation operator $T$ on the left-hand side of the Fredholm equation of the first kind
\begin{equation} \label{fredholm}
 Th( \cdot) = \widehat{E}[Y_{1}|X= \cdot]
\end{equation}
leads to an ill-posed inverse problem (see, e.g., \cite{Kress}). Consequently, some form of regularization is required for consistent nonparametric estimation of $h_0$. In the literature there are several popular methods of NPIV estimation, including but not limited to (1) finite-dimensional sieve minimum distance estimators \citep*{NeweyPowell,AiChen2003,Blundell2007}; (2) kernel-based Tikhonov regularization estimators \citep*{HallHorowitz,Darollesetal2011,GagliardiniScaillet} and their Bayesian version \citep{FlorensSimoni}; (3) orthogonal series Tikhonov regularization estimators \citep{HallHorowitz}; (4) orthogonal series Galerkin-type estimators \citep{Horowitz2011}; (5) general penalized sieve minimum distance estimators \citep{ChenPouzo2012} and their Bayesian version \citep{LiaoJiang}. See \cite{Horowitz2011} for a recent review and additional references.

To the best of our knowledge, all the existing works on convergence rates for various NPIV estimators have only studied $L^2$-norm convergence rates. In particular, \cite{HallHorowitz} are the first to establish the minimax risk lower bound in $L^2$-norm loss for a class of mildly ill-posed NPIV models, and show that their estimators attain the lower bound. \cite{ChenReiss} derive the minimax risk lower bound in $L^2$-norm loss for a large class of NPIV models that could be mildly or severely ill-posed, and show that the sieve minimum distance estimator of \cite*{Blundell2007} achieves the lower bound. Subsequently, some other NPIV estimators listed above have also been shown to achieve the optimal $L^2$-norm convergence rates. As yet there are no published results on sup-norm (uniform) convergence rates for any NPIV estimators, nor results on what are the minimax risk lower bounds in sup-norm loss for any class of NPIV models.

Sup-norm convergence rates for any estimators of $h_0$ are important for constructing uniform confidence bands for the unknown $h_0$ in  NPIV models and for conducting inference on nonlinear functionals of $h_0$, but are currently missing. In this paper we study the uniform convergence properties of the sieve minimum distance estimator of $h_0$ for the NPIV model, which is a nonparametric series two-stage least squares regression estimator \citep*{NeweyPowell,AiChen2003,Blundell2007}. We focus on this estimator because it is easy to compute and has been used in empirical work in demand analysis \citep*{Blundell2007,ChenPouzo2009}, asset pricing \citep{Chen2009}, and other applied fields in economics. Also, this class of estimators is known to achieve the optimal $L^2$-norm convergence rates for both mildly and severely ill-posed NPIV models.

We first establish a general upper bound (Theorem \ref{sup norm rate gen new}) on the uniform convergence rate of a sieve estimator, allowing for endogenous regressors and weakly dependent data.  To provide sharp bounds on the sieve approximation error or ``bias term'' we extend the proof strategy of \cite{Huang2003} for sieve nonparametric least squares (LS) regression to the sieve NPIV estimator. Together, these tools yield sup-norm convergence rates for the spline and wavelet sieve NPIV estimators under i.i.d. data. Under conditions similar to those for the $L^2$-norm convergence rates for the sieve NPIV estimators, our sup-norm convergence rates coincide with the known optimal $L^2$-norm rates for severely ill-posed problems, and are power of $\log( n)$ slower than the optimal $L^2$-norm rates for mildly ill-posed problems.  We then establish the minimax risk lower bound in sup-norm loss for $h_0$ in a NPIR model (i.e., (\ref{knownT}) with a known compact $T$) uniformly over H\"older balls, which in turn provides a lower bound in sup-norm loss for $h_0$ in a NPIV model uniformly over H\"older balls. The lower bound is shown to coincide with our sup-norm convergence rates for the spline and wavelet sieve NPIV estimators.

To establish the general upper bound, we first derive a new exponential inequality for sums of weakly dependent random matrices in Section \ref{ei sec}. This allows us to weaken conditions under which the optimal uniform convergence rates can be obtained.
As an indication of the sharpness of our general upper bound result, we show that it leads to the optimal uniform convergence rates for spline and wavelet LS regression estimators with weakly dependent data and heavy-tailed error terms. Precisely, for beta-mixing dependent data and finite $(2+\delta )$-th moment error term (for $\delta \in (0,2)$), we show that the spline and wavelet nonparametric LS regression estimators attain the minimax risk lower bound in sup-norm loss of \cite{Stone1982}. This result should be very useful to the literature on nonparametric estimation with financial time series.

The NPIV model falls within the class of statistical linear ill-posed inverse problems with \emph{unknown} operators and additive noise. There is a vast literature on statistical linear ill-posed inverse problems with known operators and additive noise. Some recent references include but are not limited to
\citet*{Cavalieretal2002}, \citet*{Cohenetal2004} and \cite{Cavalier2008}, of which density deconvolution is an important and extensively-studied problem (see, e.g., \cite{CarrollHall,Zhang,Fan1991,HallMeister,LouniciNickl}). There are also papers on statistical linear ill-posed inverse problems with pseudo-unknown operators (i.e., known eigenfunctions but unknown singular values) (see, e.g., \cite{CavalierHengartner}, \cite{LoubesMarteau}). Related papers that allow for an unknown linear operator but assume the existence of an estimator of the operator (with rate) include \cite{EfromovichKoltchinskii}, \cite{HoffmannReiss} and others.
To the best of our knowledge, most of the published works in the statistical literature on linear ill-posed inverse problems also focus on the rate optimality in $L^2$-norm loss, except that of \cite{LouniciNickl} which recently establishes the optimal sup-norm convergence rate for a wavelet density deconvolution estimator. Therefore, our minimax risk lower bounds in sup-norm loss for the NPIR and NPIV models also contribute to the large literature on statistical ill-posed inverse problems.

The rest of the paper is organized as follows. Section \ref{npiv sec} outlines the model and presents a general upper bound on the uniform convergence rates for a sieve estimator. Section \ref{o-npiv sec} establishes the optimal uniform convergence rates for the sieve NPIV estimators, allowing for both mildly and severely ill-posed inverse problems. Section \ref{reg sec} derives the optimal uniform convergence rates for the sieve nonparametric (least squares) regression, allowing for dependent data. Section \ref{ei sec} provides useful exponential inequalities for sums of random matrices, and the reinterpretation of equivalence of the theoretical and empirical $L^2$ norms as a criterion regarding convergence of a random matrix. The appendix contains a brief review of the spline and wavelet sieve spaces, proofs of all the results in the main text, and supplementary results.

\paragraph{Notation:} $%
\|\cdot\|$ denotes the Euclidean norm when applied to vectors and the matrix
spectral norm (largest singular value) when applied to matrices. For a random variable $Z$ let $%
L^q(Z) $ denote the spaces of (equivalence classes of) measurable functions
of $z$ with finite $q$-th moment if $1 \leq q < \infty$ and let $\|\cdot\|_{L^q (Z)}$ denote the $L^q (Z)$ norm. Let $L^\infty(Z)$ denote the space of measurable functions of $z$ with finite sup norm $\|\cdot\|_\infty$. If $A$ is a square matrix, $\lambda_{\min}( A)$
and $\lambda_{\max}(A)$  denote its smallest and largest eigenvalues,
respectively, and $A^-$ denotes its Moore-Penrose generalized inverse.  If $\{a_n:n \geq 1\}$ and $\{b_n : n \geq 1\}$ are two sequences of non-negative numbers, $a_n \lesssim b_n$ means there exists a finite positive $C$ such that $a_n \leq C b_n$ for all $n$ sufficiently large, and $a_n \asymp b_n$ means $a_n \lesssim b_n$ and $b_n \lesssim a_n$. $\#(\mathcal S)$ denotes the cardinality of a set $\mathcal S$ of finitely many elements. Let $\mbox{BSpl}(K,[0,1]^d, \gamma )$ and $\mbox{Wav}(K,[0,1]^d, \gamma )$ denote tensor-product B-spline (with smoothness $\gamma$) and wavelet (with regularity $\gamma$) sieve spaces of dimension $K$ on $[0,1]^d$ (see Appendix \ref{sieve def} for details on construction of these spaces).

\section{Uniform convergence rates for sieve NPIV estimators}\label{npiv sec}

We begin by considering the NPIV model
\begin{equation}  \label{npiv}
\begin{array}{rcl}
Y_{1i} & = & h_0(Y_{2i}) + \epsilon_i \\
E[\epsilon_i |X_i] & = & 0%
\end{array}%
\end{equation}
where $Y_1 \in  \mathbb R$ is a response variable, $Y_2$ is an endogenous regressor with support $\mathcal Y_2 \subset \mathbb R^d$ and $X$ is a vector of conditioning variables (also called instruments) with support $\mathcal{X }\subset
\mathbb{R}^{d_x}$. The object of interest is the unknown
structural function $h_0 : \mathcal{Y}_2 \to \mathbb{R}$ which belongs to some infinite-dimensional parameter space $\mathcal H \subset L^2(Y_2)$. It is assumed hereafter that $h_0$ is identified uniquely by the conditional moment restriction (\ref{npiv}). See \cite%
{NeweyPowell}, \citet*{Blundell2007}, \citet*{Darollesetal2011}, \cite{Andrews2011}, \cite{D'Haultfoeuille}, \citet*{Chen2012b} and references therein for sufficient conditions for identification.

\subsection{Sieve NPIV estimators}

The sieve NPIV estimator due to \cite{NeweyPowell}, \cite{AiChen2003}, and \cite*{Blundell2007} is a nonparametric series two-stage least squares estimator. Let the sieve spaces $\{\Psi _{J}:J\geq 1\} \subseteq L^2(Y_2)$ and $\{B_K : K \geq 1\} \subset L^2(X)$ be sequences of subspaces of dimension $J$ and $K$ spanned by sieve basis functions such that $\Psi_J$ and $B_K$ become dense in $\mathcal H \subset L^{2}(Y_{2})$ and $L^2(X)$ as $J,K\rightarrow \infty $. For given $J$ and $K$, let $\{\psi_{J1},\ldots,\psi_{JJ}\}$ and $\{b_{K1},\ldots,b_{KK}\}$ be sets of sieve basis functions whose closed linear span generates $\Psi_J$ and $B_K$ respectively. We consider sieve spaces generated by spline, wavelet or other Riesz basis functions that have nice approximation properties (see Section \ref{o-npiv sec} for details).

In the first stage, the conditional moment function $%
m(x,h):\mathcal{X}\times \mathcal{H}\rightarrow \mathbb{R}$ given by
\begin{equation}
m(x,h)=E[Y_{1}-h(Y_{2})|X=x]
\end{equation}%
is estimated using the series (least squares) regression estimator
\begin{equation} \label{mhat equation}
\widehat{m}(x,h)=\sum_{i=1}^{n}b^{K}(x)^{\prime }(B^{\prime
}B)^{-}b^{K}(X_{i})(Y_{1i}-h(Y_{2i}))
\end{equation}%
where
\begin{equation} \begin{array}{rcl}
b^K(x) & = & (b_{K1}(x),\ldots,b_{KK}(x))' \\
B & = & (b^K(X_1),\ldots,b^K(X_n))' \,.
\end{array} \end{equation}
The sieve NPIV estimator $\widehat h$ is then defined as the solution to the second-stage minimization problem
\begin{equation}
\widehat{h}=\arg \min_{h\in \Psi_{J}}\frac{1}{n}\sum_{i=1}^{n}%
\widehat{m}(X_{i},h)^{2}
\end{equation}%
which may be solved in closed form to give
\begin{equation}
 \widehat h(y_2) = \psi^J(y_2)'[\Psi'B(B'B)^-B'\Psi]^- \Psi'B (B'B)^- B'Y
\end{equation}
where
\begin{equation}
\begin{array}{rcl}
 \psi^J(y_2) & = & (\psi_{J1}(y_2),\ldots,\psi_{JJ}(y_2))' \\
  \Psi & = & (\psi^J(Y_{21}),\ldots,\psi^J(Y_{2n}))' \\
 Y & = & (Y_{11},\ldots,Y_{1n})'\,.
\end{array}
\end{equation}%
Under mild regularity conditions (see \cite{NeweyPowell}, \citet*{Blundell2007} and \cite{ChenPouzo2012}), $\widehat h$ is a consistent estimator of $h_0$ (in both $\|\cdot\|_{L^2(Y_2)}$ and $\|\cdot\|_\infty$ norms) as $n,J,K \to \infty$, provided $J \leq K$ and $J$ increases appropriately slowly so as to  regularize the ill-posed inverse problem.\footnote{Here we have used $K$ to denote the ``smoothing parameter'' (i.e. the dimension of the sieve space used to estimate the conditional moments in (\ref{mhat equation})) and $J$ to denote the ``regularization parameter'' (i.e. the dimension of the sieve space used to approximate the unknown $h_0$). Note that \cite{ChenReiss} use $J$ and $m$, \citet*{Blundell2007} and \cite{ChenPouzo2012} use $J$ and $k$ to denote the smoothing and regularization parameters, respectively.} We note that the modified sieve estimator (or orthogonal series Galerkin-type estimator) of \cite{Horowitz2011} corresponds to the sieve NPIV estimator with $J=K$ and $\psi^J (\cdot) = b^K (\cdot)$ being orthonormal basis in $L^2 (Lebesgue)$.

\subsection{A general upper bound on uniform convergence rates for sieve estimators}

We first present a general calculation for sup-norm convergence which will be used to obtain uniform convergence rates for both the sieve NPIV and the sieve LS estimators below.

As the sieve estimators are invariant to an invertible transformation of the sieve basis functions, we re-normalize the sieve spaces $B_K$ and $\Psi_J$ so that $\{\widetilde b_{K1},\ldots,\widetilde b_{KK}\}$ and $\{\widetilde \psi_{J1},\ldots,\widetilde \psi_{JJ}\}$ form orthonormal bases for $B_K$ and $\Psi_J$. This is achieved by setting $\widetilde b^K(x) = E[b^K(X)b^K(X)']^{-1/2}b^K(x)$ where $^{-1/2}$ denotes the inverse of the positive-definite matrix square root (which exists under Assumption \ref{b sieve}(ii) below), with $\widetilde \psi^J$ similarly defined. Let
\begin{equation} \begin{array}{rcl}
 \widetilde B & = & (\widetilde b^K(X_1),\ldots,\widetilde b^K(X_n))' \\
 \widetilde \Psi & = & (\widetilde \psi^J(Y_{21}),\ldots,\widetilde \psi^J(Y_{2n}))'
\end{array}
\end{equation}
and define the $J \times K$ matrices
\begin{equation} \begin{array}{rcl} \label{S def}
 S & = & E[\widetilde \psi^J(Y_2)\widetilde b^K(X)'] \\
 \widehat S & = & \widetilde \Psi'\widetilde B/n\,. \end{array}
\end{equation}
Let $\sigma_{JK}^2 = \lambda_{\min}(SS')$. For each $h \in \Psi_J$ define
\begin{equation} \label{delta tilde m}
 \Pi_K Th(\cdot) = \widetilde b^K(x)'E[\widetilde b^K(X) (T h)(X)] = \widetilde b^K(\cdot)'E[\widetilde b^K(X) h(Y_2)]
\end{equation}
which is the $L^2(X)$ orthogonal projection of $T h(\cdot)$ onto $B_K$. The variational characterization of singular values gives
\begin{equation}
 \sigma_{JK} = \inf_{h \in \Psi_J : \|h\|_{L^2(Y_2)} = 1} \|\Pi_K Th\|_{L^2(X)} \leq 1\,.
\end{equation}
Finally, define $P_n$ as the second-stage empirical projection operator onto the sieve space $\Psi_J$ after projecting onto the instrument space $B_K$, viz.
\begin{equation}
 P_n h_0(y_2) = \widetilde \psi^J(y_2) [\widehat S(\widetilde B'\widetilde B/n)^{-} \widehat S']^{-} \widehat S (\widetilde B'\widetilde B/n)^{-} \widetilde B' H_0/n
\label{Pn def}
\end{equation}
where $H_0 = (h_0(Y_{21}),\ldots,h_0(Y_{2n}))'$.

We first decompose the sup-norm error as
\begin{equation}
 \|h_0 - \widehat h\|_{\infty} \leq \|h_0 - P_n h_0  \|_{\infty} + \|P_n h_0  - \widehat h \|_{\infty}
\end{equation}
and calculate the uniform convergence rate for the ``variance term'' $\|\widehat h - P_n h_0\|_{\infty}$ in this section. Control of the ``bias term'' $\|h_0 - P_n h_0  \|_{\infty}$ is left to the subsequent sections, which will be dealt with under additional regularity conditions for the NPIV model and the LS regression model separately.

Let $Z_i = (X_i,Y_{1i},Y_{2i})$ and $\mathcal F_{i-1} = \sigma(X_i,X_{i-1},\epsilon_{i-1},X_{i-2},\epsilon_{i-2},\ldots)$.

\begin{assumption}\label{data}
(i) $\{Z_i\}_{i=-\infty}^\infty$ is strictly stationary, (ii) $X$ has support $\mathcal X = [0,1]^{d}$ and $Y_2$ has support $\mathcal Y_2 = [0,1]^d$, (iii) the distributions of $X$ and $Y_2$ have density (with respect to Lebesgue measure) which is uniformly bounded away from zero and infinity over $\mathcal X$ and $\mathcal Y_2$ respectively.
\end{assumption}

The results stated in this section do not actually require that $\dim(X) = \dim(Y_2)$. However, most published papers on NPIV models assume $\dim(X) = \dim(Y_2) = d$ and so we follow this convention in Assumption 1(ii).

\begin{assumption}\label{resid}
(i) $(\epsilon_i,\mathcal F_{i-1})_{i=-\infty}^\infty$ is a strictly stationary martingale difference sequence, (ii) the conditional second moment $E[\epsilon_i^2 |\mathcal F_{i-1}]$ is uniformly bounded away from zero and infinity, (iii) $E[|\epsilon_i|^{2+\delta}] < \infty$ for some $\delta > 0$.
\end{assumption}

\begin{assumption}\label{f sieve}
(i) Sieve basis $\psi^J(\cdot) $ is H\"older continuous with smoothness $\gamma > p$ and $\sup_{y_2 \in \mathcal Y_2} \|\psi^J(y_2)\| \lesssim \sqrt J$, (ii)
$ \lambda_{\min}(E[\psi^J(Y_2)\psi^J(Y_2)^{\prime }]) \geq \underline
\lambda> 0
$ for all $J \geq 1$.
\end{assumption}

In what follows, $p>0$ indicates the smoothness of the function $h_0 (\cdot)$ (see Assumption \ref{parameter regression} in Section \ref{o-npiv sec}).

\begin{assumption}\label{b sieve}
(i) Sieve basis $b^K(\cdot) $ is H\"older continuous with smoothness ${\gamma}_x \geq \gamma > p$ and $\sup_{x \in \mathcal X} \| b^K(x)\| \lesssim  \sqrt K$, (ii)
$
 \lambda_{\min}(E[b^K(X)b^K(X)^{\prime }]) \geq \underline
\lambda> 0
$
for all $K \geq 1$.
\end{assumption}

The preceding assumptions on the data generating process trivially nest i.i.d. sequences but also allow for quite general weakly-dependent data. In an i.i.d. setting, Assumption \ref{resid}(ii) reduces to requiring that $E[\epsilon_i^2 |X_i = x]$ be bounded uniformly from zero and infinity which is standard (see, e.g., \cite{Newey1997,HallHorowitz}). The value of $\delta$ in Assumption \ref{resid}(iii) depends on the context. For example, $\delta \geq d/p$ will be shown to be sufficient to attain the optimal sup-norm convergence rates for series LS regression in Section \ref{reg sec}, whereas lower values of $\delta$ suffice to attain the optimal sup-norm convergence rates for the sieve NPIV estimator in Section \ref{o-npiv sec}. Rectangular support and bounded densities of the endogenous regressor and instrument are assumed in \cite{HallHorowitz}. Assumptions \ref{f sieve}(i) and \ref{b sieve}(i) are satisfied by many widely used sieve bases such as spline, wavelet and cosine sieves, but they rule out polynomial and power series sieves (see, e.g., \cite{Newey1997,Huang1998}). The instruments sieve basis $b^K(\cdot) $ is used to approximate the conditional expectation operator $Th=E[h(Y_2)|X=\cdot)$, which is a smoothing operator. Thus Assumption \ref{b sieve}(i) assumes that the sieve basis $b^K(\cdot) $ (for $Th$) is smoother than that of the sieve basis $\psi^J(\cdot) $ (for $h$).

In the next theorem, our upper bound on the ``variance term'' $\|\widehat h - P_n h_0\|_{\infty}$ holds under general weak dependence as captured by Condition (ii) on the convergence of the random matrices $\widetilde B'\widetilde B/n - I_K$ and $\widehat S - S$.

\begin{theorem}\label{sup norm rate gen new}
Let Assumptions \ref{data}, \ref{resid}, \ref{f sieve} and \ref{b sieve} hold. If $\sigma_{JK} > 0$ then:
\begin{equation*}
 \|h_0 - \widehat h\|_{\infty} \leq \|h_0 - P_n h_0\|_{\infty} + O_p \left( \sigma_{JK}^{-1}\sqrt{K (\log n)/n}\right)
\end{equation*}
provided $n,J,K \to \infty$ and
\begin{enumerate}[(i)]
\item $J \leq K$, $K \lesssim (n/\log n)^{\delta/(2+\delta)}$, and $\sigma_{JK}^{-1} \sqrt{K (\log n)/n} \lesssim 1$
\item $\sigma_{JK}^{-1} \left( \|(\widetilde B'\widetilde B/n) - I_K\| +  \|\widehat S - S\|\right)= O_p(\sqrt {(\log n)/K})= o_p(1)$.
\end{enumerate}
\end{theorem}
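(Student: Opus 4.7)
The plan is to bound the ``variance'' term $\|\widehat h - P_n h_0\|_\infty$ in the triangle decomposition displayed just before the theorem statement. First I would use $Y_{1i}-h_0(Y_{2i})=\epsilon_i$ and the closed-form expressions for $\widehat h$ and $P_n h_0$ (both invariant under the orthonormalization of the bases) to obtain the identity
\begin{equation*}
(\widehat h - P_n h_0)(y_2) \;=\; \widetilde\psi^J(y_2)'\,A\,\widetilde B'\boldsymbol\epsilon/n,\qquad A := \bigl[\widehat S(\widetilde B'\widetilde B/n)^{-}\widehat S'\bigr]^{-}\widehat S(\widetilde B'\widetilde B/n)^{-},
\end{equation*}
and then set $c(y_2):=A'\widetilde\psi^J(y_2)\in\mathbb R^K$, so the variance term equals $\sup_{y_2}|c(y_2)'\widetilde B'\boldsymbol\epsilon/n|$. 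The key observation is that although $\widetilde\psi^J$ is $J$-dimensional, the noise contact $\widetilde B'\boldsymbol\epsilon/n$ is $K$-dimensional, so the effective variance in a Bernstein bound will scale with $K$ (via $J\le K$).

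Next I would work on the high-probability event $\mathcal E_n$ from Condition (ii) on which $\widetilde B'\widetilde B/n$ and $\widehat S\widehat S'$ are well-conditioned. On $\mathcal E_n$ we have $AA'=[\widehat S(\widetilde B'\widetilde B/n)^{-}\widehat S']^{-1}$ with $\|AA'\|\lesssim\sigma_{JK}^{-2}$; combined with $\|\widetilde\psi^J(y_2)\|\lesssim\sqrt J$ (from Assumption \ref{f sieve}(i)) and $J\le K$ (from Condition (i)), this yields the uniform bound $\sup_{y_2}\|c(y_2)\|\lesssim\sqrt K/\sigma_{JK}$. To handle the fact that $c(y_2)$ is itself random, I would linearize: write $c(y_2)=c^*(y_2)+r(y_2)$ where $c^*(y_2):=S'(SS')^{-1}\widetilde\psi^J(y_2)$ is deterministic and, by a perturbation argument applied to the closeness statements in Condition (ii), $\sup_{y_2}\|r(y_2)\|=o_p(\sqrt K/\sigma_{JK})$ at a rate that, when combined with $\|\widetilde B'\boldsymbol\epsilon/n\|=O_p(\sqrt{K/n})$ (which follows from the MDS structure in Assumption \ref{resid}), makes the remainder term $r(y_2)'\widetilde B'\boldsymbol\epsilon/n$ negligible relative to the target.

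For the main term $c^*(y_2)'\widetilde B'\boldsymbol\epsilon/n=\sum_i c^*(y_2)'\widetilde b^K(X_i)\epsilon_i/n$ with deterministic $c^*(y_2)$, I would apply a Bernstein-type inequality for MDS with respect to $\mathcal F_{i-1}$ after truncating $\epsilon_i$ at a level $\tau_n\asymp(n/\log n)^{1/(2+\delta)}$. The discarded tail has summed magnitude $o_p(\sigma_{JK}^{-1}\sqrt{K\log n/n})$ by Assumption \ref{resid}(iii) together with the growth restriction $K\lesssim(n/\log n)^{\delta/(2+\delta)}$; the truncated Bernstein then delivers, for each fixed $y_2$ and any $C>0$, a bound of order $\|c^*(y_2)\|\sqrt{\log n/n}\lesssim\sigma_{JK}^{-1}\sqrt{K\log n/n}$ on the event of probability at least $1-n^{-C}$. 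The growth condition on $K$ is precisely what is needed for the bounded-variable term $M\log n$ in Bernstein to be dominated by the variance term $\sqrt{V\log n}$.

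Finally, to pass from pointwise to the supremum over $y_2\in[0,1]^d$ I would use a covering argument. Because $\widetilde\psi^J$ is H\"older of order $\gamma>p$ with $\|\widetilde\psi^J\|_\infty\lesssim\sqrt J$ by Assumption \ref{f sieve}(i), the H\"older constant of $c^*$ grows at most polynomially in $J$, so a grid of mesh $n^{-a}$ (with $a$ large) suffices to discretize $[0,1]^d$ into $\mathrm{poly}(n)$ cells on which $c^*$ varies by at most $n^{-1}$. A union bound over this polynomial net absorbs only an extra $\log n$ factor into the exponent, which is already present in the pointwise bound. The main obstacle is keeping the pointwise Bernstein bound at the $\sqrt K$ scale rather than $\sqrt{JK}$: this is what forces one to separate the deterministic part $c^*(y_2)$, on which Bernstein can be applied directly with variance proxy $\|c^*(y_2)\|^2/n\lesssim J/(n\sigma_{JK}^2)\le K/(n\sigma_{JK}^2)$, from the data-dependent remainder $r(y_2)$, which is absorbed by Condition (ii). A secondary delicate point is calibrating $\tau_n$ and invoking Condition (i) to simultaneously kill the truncation tail and tame the sub-exponential term in Bernstein.
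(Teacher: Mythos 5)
Your proposal follows essentially the same route as the paper's own proof: you begin from the same closed-form identity for $\widehat h - P_n h_0$ in terms of $[\widehat S(\widetilde B'\widetilde B/n)^{-}\widehat S']^{-}\widehat S(\widetilde B'\widetilde B/n)^{-}\widetilde B'\boldsymbol\epsilon/n$, split the random projection matrix into its deterministic counterpart $[SS']^{-1}S$ plus a perturbation absorbed by Condition~(ii) (the paper does exactly this via its matrix-perturbation Lemma), truncate $\epsilon_i$ at $\tau_n \asymp (n/\log n)^{1/(2+\delta)}$ (this matches the paper's $M_n^{1+\delta}\asymp\sqrt{nK/\log n}$ at the boundary $K\asymp(n/\log n)^{\delta/(2+\delta)}$), apply a martingale Bernstein inequality to the truncated part (the paper uses van de Geer's Lemma~2.1), handle the truncation tail via Markov and Assumption~\ref{resid}(iii), and pass to the supremum by covering $[0,1]^d$ with a polynomial-in-$n$ net and using H\"older continuity of $\widetilde\psi^J$ from Assumption~\ref{f sieve}(i). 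The one slightly misleading sentence is your early remark that the ``effective variance will scale with $K$'' because $\widetilde B'\boldsymbol\epsilon/n$ is $K$-dimensional: the variance proxy actually scales as $\|c^*(y_2)\|^2/n\lesssim J/(n\sigma_{JK}^2)$, i.e. at the $J$ scale, and one simply majorizes by $K/(n\sigma_{JK}^2)$ using $J\le K$, exactly as you correctly write later in the proposal. This does not affect the validity of the argument.
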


The restrictions on $J$, $K$ and $n$ in Conditions (i) and (ii) merit a brief explanation. The restriction $J \leq K$ merely ensures that the sieve NPIV estimator is well defined. The restriction $K \lesssim (n/\log n)^{\delta/(2+\delta)}$ is used to perform a truncation argument using the existence of $(2+\delta)$-th moment of the error terms (see Assumption \ref{resid}). Condition (ii) ensures that $J$ increases sufficiently slowly that with probability approaching one the minimum eigenvalue of the ``denominator'' matrix $\Psi'B(B'B)^-B'\Psi/n$ is positive and bounded below by a multiple of $\sigma_{JK}^2$, thereby regularizing the ill-posed inverse problem. It also ensures the error in estimating the matrices $(\widetilde B'\widetilde B/n)$ and $\widehat S$ vanishes sufficiently quickly that it doesn't affect the convergence rate of the estimator.

\begin{remark}
Section \ref{ei sec} provides very mild low-level sufficient conditions for Condition (ii) to hold under weakly dependent data. In particular, when specializing Corollary \ref{troppcor} to i.i.d. data $\{(X_i,Y_{2i})\}_{i=1}^n$ (also see Lemma \ref{Bconvi.i.d.}), under Assumptions \ref{f sieve} and \ref{b sieve} and $J \leq K$, we have:
\begin{equation*}
 \| (\widetilde B'\widetilde B/n) - I_K\| =  O_p(\sqrt{K (\log K)/n}),~~
 \| \widehat S - S\|  =  O_p(\sqrt{K (\log K)/n}).
\end{equation*}
\end{remark}

\section{Optimal uniform convergence rates for sieve NPIV estimators}\label{o-npiv sec}

\subsection{Upper bounds on uniform convergence rates for sieve NPIV estimators}

We now exploit the specific linear structure of the sieve NPIV estimator to derive uniform convergence rates for the mildly and severely ill-posed cases. Some additional assumptions are required so as to control the ``bias term'' $\|h_0 - P_n h_0\|_{\infty}$ and to relate the estimator to the measure of ill-posedness.

\textbf{$p$-smooth H\"older class of functions.} We first impose a standard smoothness condition on the unknown structural function $h_0$ to facilitate comparison with \cite{Stone1982}'s minimax risk lower bound in sup-norm loss for a nonparametric regression function.
Recall that $\mathcal Y_2 = [0,1]^d$. Deferring definitions to \cite{Triebel2006,Triebel2008}, we let $B^{p}_{q,q}([0,1]^d)$ denote the Besov space of smoothness $p$ on the domain $[0,1]^d$ and $\|\cdot\|_{B^{p}_{q,q}}$ denote the usual Besov norm on this space. 
Special cases include the Sobolev class of smoothness $p$, namely $B^{p}_{2,2}([0,1]^d)$, and the H\"older-Zygmund class of smoothness $p$, namely $B^{p}_{\infty,\infty}([0,1]^d)$. Let $B(p,L)$ denote a H\"older ball of smoothness $p$ and radius $0 <L<\infty$, i.e. $B(p,L) = \{ h \in B^p_{\infty,\infty}([0,1]^d) : \|h\|_{B^p_{\infty,\infty}} \leq L\}$.

\begin{assumption}\label{parameter regression}
$h_0 \in \mathcal H = B^{p}_{\infty,\infty}([0,1]^d)$ for some $p \geq d/2$.
\end{assumption}

Assumptions \ref{f sieve} and \ref{parameter regression} imply that there is $\pi_J h_0 \in \Psi_J$ such that $\|h_0 - \pi_J h_0\|_{\infty} = O(J^{-p/d})$.

\textbf{Sieve measure of ill-posedness.} Let $T : L^q(Y_2) \to L^q(X)$ denote the conditional expectation operator for $1\leq q \leq \infty$:
\begin{equation}
 T h(x) = E[h(Y_{2i})|X_i = x]\,.
\end{equation}
When $Y_2$ is endogenous, $T$ is compact under mild conditions on the conditional density of $Y_2$ given $X$. For $q'\geq q\geq 1$, we define a measure of ill-posedness (over a sieve space $\Psi_J$) as
\begin{equation}
\tau_{q,q',J} = \sup_{h \in \Psi_J : \|Th\|_{L^q(X)} \neq 0} \frac{\|h\|_{L^{q'}(Y_2)}}{%
\|Th\|_{L^q(X)}}\,.
\end{equation}
The $\tau_{2,2,J}$ measure of ill-posedness is clearly related to our earlier definition of $\sigma_{JK}$. By definition
\begin{equation*}
 \sigma_{JK} = \inf_{h \in \Psi_J : \|h\|_{L^2(Y_2)} = 1} \|\Pi_K Th\|_{L^2(X)} \leq \inf_{h \in \Psi_J : \|h\|_{L^2(Y_2)} = 1} \| Th\|_{L^2(X)}
 =\left( \tau_{2,2,J} \right)^{-1}
\end{equation*}
when $J \leq K$. The sieve measures of ill-posedness, $\tau_{2,2,J}$ and $\sigma_{JK}^{-1}$, are clearly non-decreasing in $J$. In \citet*{Blundell2007}, \cite{Horowitz2011} and \cite{ChenPouzo2012}, the NPIV model is said to be

\emph{$\bullet$ mildly ill-posed} if $\tau_{2,2,J} = O(J^{\varsigma/d})$ for some $\varsigma >
0 $;

\emph{$\bullet$ severely ill-posed} if $\tau_{2,2,J} = O(\exp(\frac{1}{2}
J^{\varsigma /d}))$ for some $\varsigma > 0$.

These measures of ill-posedness are not exactly the same as (but are related to) the measure of ill-posedness used in \cite{HallHorowitz} and \cite{Cavalier2008}. In the latter papers, it is assumed that the compact operator $T : L^2(Y_2) \to L^2(X)$ admits a singular value decomposition $\{\mu_{k};\phi_{1k},\phi_{0k}\}_{k=1}^{\infty }$, where $\{\mu_{k}\}_{k=1}^{\infty }$
are the singular numbers arranged in non-increasing order ($\mu_{k}\geq \mu_{k+1}\searrow 0$), $\{\phi_{1k}(y_{2})\}_{k=1}^{\infty }$ and $\{\phi_{0k}(x)\}_{k=1}^{\infty }$ are eigenfunction (orthonormal) bases for $%
L^{2}(Y_{2})$ and $L^{2}(X)$ respectively, and ill-posedness is measured in terms of the rate of decay of the singular values towards zero. Denote $T^{\ast }$ as the adjoint operator of $T$: $\{T^{\ast }g\}(Y_{2})\equiv E[g(X)|Y_{2}]$, which maps $L^{2}(X)
$ into $L^{2}(Y_{2})$. Then a compact $T$ implies that $T^{\ast }$, $T^{\ast}T$ and $TT^{\ast}$ are also
compact, and that $T\phi_{1k}=\mu_{k}\phi_{0k}$ and $T^{\ast }\phi_{0k}=\mu_{k}\phi_{1k}$ for all $k$. We note that $\|Th \|_{L^2(X)}=\|(T^*T)^{1/2}h \|_{L^2(Y_2)}$ for all $h \in Dom(T)$.
The following lemma provides some relations between these different measures of ill-posedness.

\begin{lemma}\label{ill-suff}
Let the conditional expectation operator $T : L^2(Y_2) \to L^2(X)$ be compact and injective. Then: (1) $\sigma_{JK}^{-1}\geq \tau_{2,2,J}\geq 1/\mu_{J}$; (2) If the sieve space
$\Psi_{J}$ spans the closed linear subspace (in $L^{2}(Y_{2})$)
generated by $\left\{ \phi_{1k}:k=1,...,J\right\} $, then: $%
\tau_{2,2,J}\leq 1/\mu_{J}$; (3) If, in addition, $J \leq K$ and the sieve space $B_{K}$ contains the closed
linear subspace (in $L^{2}(X)$) generated by $\left\{ \phi_{0k}:k=1,...,J\right\}$, then: $\sigma_{JK}^{-1}\leq
1/\mu_{J}$ and hence $\sigma_{JK}^{-1} = \tau_{2,2,J} = 1/\mu_J$.
\end{lemma}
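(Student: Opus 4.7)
The three statements form a sandwich between two projection-based ill-posedness measures and the $J$-th singular value $\mu_J$, and the argument rests on just two tools: contractivity of the $L^2(X)$-orthogonal projection $\Pi_K$, and the Courant--Fischer max--min characterization of the singular values of the compact operator $T$.

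\emph{Part (1).} I would split into the two inequalities. For $\sigma_{JK}^{-1}\geq \tau_{2,2,J}$, observe that because $\Pi_K$ is an $L^2(X)$-orthogonal projection, $\|\Pi_K Th\|_{L^2(X)}\leq \|Th\|_{L^2(X)}$ for every $h\in\Psi_J$; taking the infimum over $\|h\|_{L^2(Y_2)}=1$ and using injectivity of $T$ to rewrite $\tau_{2,2,J}^{-1}=\inf_{h\in\Psi_J,\|h\|_{L^2(Y_2)}=1}\|Th\|_{L^2(X)}$ gives $\sigma_{JK}\leq \tau_{2,2,J}^{-1}$. For $\tau_{2,2,J}\geq 1/\mu_J$, apply the max--min formula to the self-adjoint compact operator $T^{\ast}T$, whose eigenvalues are $\{\mu_k^2\}$ in decreasing order with eigenvectors $\phi_{1k}$: $\mu_J^2=\sup_{\dim V=J}\inf_{h\in V,\|h\|=1}\|Th\|_{L^2(X)}^2$. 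Testing with $V=\Psi_J$ (which has dimension $J$) yields $\inf_{h\in\Psi_J,\|h\|=1}\|Th\|_{L^2(X)}\leq\mu_J$, i.e.\ $\tau_{2,2,J}\geq 1/\mu_J$.

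\emph{Parts (2) and (3).} For (2), work directly with the SVD: any $h\in\Psi_J$ can be written $h=\sum_{k=1}^J a_k\phi_{1k}$, so $\|h\|_{L^2(Y_2)}^2=\sum_k a_k^2$ and, using $T\phi_{1k}=\mu_k\phi_{0k}$, $\|Th\|_{L^2(X)}^2=\sum_k a_k^2\mu_k^2\geq\mu_J^2\|h\|^2$ because $\mu_k\geq\mu_J$ for $k\leq J$; taking the supremum of $\|h\|/\|Th\|$ gives $\tau_{2,2,J}\leq 1/\mu_J$. For (3), the additional hypothesis $B_K\supseteq\mathrm{span}\{\phi_{0k}:k\leq J\}$ forces $\Pi_K Th=Th$ for every $h\in\Psi_J$, since $Th\in\mathrm{span}\{\phi_{0k}:k\leq J\}\subseteq B_K$ by the computation just made. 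Hence $\sigma_{JK}=\inf_{h\in\Psi_J,\|h\|=1}\|Th\|_{L^2(X)}=\tau_{2,2,J}^{-1}$, and combining with the upper bound from (2) and the lower bounds from (1) collapses the whole chain to $\sigma_{JK}^{-1}=\tau_{2,2,J}=1/\mu_J$.

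\emph{Main obstacle.} There is no deep difficulty; the only point that requires care is the right form of Courant--Fischer. Since $T$ is not self-adjoint I would pass to $T^{\ast}T$ on $L^2(Y_2)$ rather than try to quote a min--max statement directly for singular values, so that testing the max--min formula with the subspace $\Psi_J$ immediately gives the required upper bound on $\inf_{h\in\Psi_J,\|h\|=1}\|Th\|$. Everything else is one-line book-keeping using either contractivity of $\Pi_K$ or the explicit SVD expansion.
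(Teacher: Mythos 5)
Your proof is correct. Two remarks on how it relates to the paper's. First, the paper only proves part~(3), citing Lemma~1 of Blundell, Chen and Kristensen (2007) for parts~(1) and~(2); your Courant--Fischer argument (applied to $T^*T$) for the second inequality of~(1), and the direct SVD expansion for~(2), are the standard arguments and match that reference. Second, for part~(3) your route is cleaner than the one in the Appendix. The paper expands $\Pi_K Th = \sum_{j=1}^J \langle Th,\phi_{0j}\rangle\,\phi_{0j} + R(\cdot,h)$ with $R(\cdot,h)$ orthogonal to $\mathcal P_J = \mathrm{clsp}\{\phi_{0j}: j\le J\}$, then discards the nonnegative $\|R\|^2$ term to get $\sigma_{JK}^2 \ge \mu_J^2$, and finally closes the chain using part~(1). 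You instead observe that under the part~(2) and~(3) hypotheses every $h\in\Psi_J$ has $Th \in \mathcal P_J \subseteq B_K$, so $\Pi_K Th = Th$ exactly (equivalently, the paper's remainder $R(\cdot,h)\equiv 0$), giving $\sigma_{JK} = \tau_{2,2,J}^{-1}$ in one step before invoking~(1)--(2). The conclusions agree; your version just makes explicit the structural fact that the projection is the identity on $T(\Psi_J)$, which the paper's inequality argument does not surface.
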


Lemma \ref{ill-suff} parts (1) and (2) is Lemma 1 of \citet*{Blundell2007}, while Lemma \ref{ill-suff} part (3) is proved in the Appendix. We next present a sufficient condition to bound the sieve measures of ill-posedness $\sigma_{JK}^{-1}$ and $ \tau_{2,2,J}$.

\begin{assumption}
\label{r-link-T} (\textbf{sieve reverse link condition}) There is a continuous increasing function $\varphi :%
\mathbb R_{+}\rightarrow \mathbb R_{+}$ such that: (a) $\|Th \|_{L^2(X)}^{2}\gtrsim
\sum_{j=1}^{J }\varphi (j^{-2/d})|E[h(Y_2)\widetilde \psi_{Jj} (Y_2)]|^{2} $ for all $h\in \Psi_{J}$; or (b) $\|\Pi_K Th \|_{L^2(X)}^{2}\gtrsim
\sum_{j=1}^{J}\varphi (j^{-2/d})|E[h(Y_2)\widetilde \psi_{Jj} (Y_2)]|^{2}$ for all $h\in \Psi_{J}$
\end{assumption}

It is clear that Assumption \ref{r-link-T}(b) implies Assumption \ref{r-link-T}(a). Assumption \ref{r-link-T}(a) is the so-called ``sieve reverse link condition'' used in \cite{ChenPouzo2012}, which is weaker than the ``reverse link condition'' imposed in \cite{ChenReiss} and others in the ill-posed inverse literature: $\|Th \|_{L^2(X)}^{2}\gtrsim
\sum_{j=1}^{\infty }\varphi (j^{-2/d})|E[h(Y_2)\widetilde \psi_{Jj} (Y_2)]|^{2} $ for all $h\in B(p,L)$. We immediately have the following bounds:

\begin{remark}\label{r-link-ill-suff}
(1) Assumption \ref{r-link-T}(a) implies that $\tau _{2,2,J}\lesssim \left(\varphi (J^{-2/d})\right)^{-1/2} $. (2) Assumption \ref{r-link-T}(b) implies that $\tau _{2,2,J} \leq \sigma_{JK}^{-1} \lesssim \left(\varphi (J^{-2/d})\right)^{-1/2}$.
\end{remark}
Given Remark \ref{r-link-ill-suff}, in this paper we could call a NPIV model

\emph{$\bullet$ mildly ill-posed} if $\sigma_{JK}^{-1} = O(J^{\varsigma/d})$ or $\varphi (t)=t^{\varsigma}$ for some $\varsigma >
0 $;

\emph{$\bullet$ severely ill-posed} if $\sigma_{JK}^{-1} = O(\exp(\frac{1}{2}
J^{\varsigma /d}))$ or $\varphi (t)=\exp(-t^{-\varsigma /2})$ for some $\varsigma > 0$.

Define
\begin{equation}
 \sigma_{\infty, JK} = \inf_{h \in \Psi_J : \|h\|_{\infty} = 1} \|\Pi_K Th \|_{\infty} \leq \left( \tau_{\infty,\infty,J} \right)^{-1}.
\end{equation}

\begin{assumption}\label{modulus}
(i) The conditional expectation operator $T : L^q(Y_2) \to L^q(X)$ is compact and injective for $q=2$ and $q=\infty$, (ii) $\sigma_{\infty,JK}^{-1}\|\Pi_K T (h_0 - \pi_J h_0)\|_{\infty} \lesssim \|h_0 - \pi_J h_0\|_{\infty}$.
\end{assumption}

Assumption \ref{modulus}(ii) is a sup-norm analogue of the so-called ``stability condition'' imposed in the ill-posed inverse regression literature, such as Assumption 6 of \citet*{Blundell2007} and Assumption 5.2(ii) of \cite{ChenPouzo2012}.

To control the ``bias term'' $\|P_n h_0 - h_0\|_{\infty}$, we will use spline or wavelet sieves in Assumptions \ref{f sieve} and \ref{b sieve} so that we can make use of sharp bounds on the approximation error due to \cite{Huang2003}.\footnote{The key property of spline and wavelet sieve spaces that permits this sharp bound is their local support (see the appendix to \cite{Huang2003}). Other sieve bases such as orthogonal polynomial bases do not have this property and are therefore unable to attain the optimal sup-norm convergence rates for NPIV or nonparametric series LS regression.}
Control of the ``bias term'' $\|P_n h_0 - h_0\|_{\infty}$ is more involved in the sieve NPIV context than the sieve nonparametric LS regression context. In particular, control of this term makes use of an additional argument using exponential inequalities. To simplify presentation, the next theorem just presents the uniform convergence rate for sieve NPIV estimators under i.i.d. data.

\begin{theorem}\label{sup norm rate npiv new}
Let Assumptions \ref{data}, \ref{resid}, \ref{f sieve} (with $\Psi_J = \mbox{BSpl}(J,[0,1]^d,\gamma ) ~or~ \mbox{Wav}(J,[0,1]^d, \gamma)$), \ref{b sieve} (with $B_K = \mbox{BSpl}(K,[0,1]^{d},{\gamma}_x )~or~ \mbox{Wav}(K,[0,1]^{d}, {\gamma}_x)$), \ref{parameter regression} and \ref{modulus} hold. If $\{(X_i,Y_{2i})\}_{i=1}^n$ is i.i.d. then:
\begin{equation*}
 \|h_0 - \widehat h \|_{\infty} = O_p ( J^{-p/d} + \sigma_{JK}^{-1} \sqrt{K (\log n)/n}  )
\end{equation*}
provided $J \leq K$, $K \lesssim (n/\log n)^{\delta/(2+\delta)}$, and $\sigma_{JK}^{-1} K\sqrt{(\log n)/ n} \lesssim 1$ as $n,J,K \to \infty$.

(1) Mildly ill-posed case ($\sigma_{JK}^{-1} = O(J^{\varsigma/d})$ or $\varphi (t)=t^{\varsigma}$). If Assumption \ref{resid} holds with $\delta \geq d/(\varsigma + p)$, and $ J \asymp K \asymp (n/\log n)^{d/(2(p+\varsigma)+d)}$ with $K/J \to c_0 \geq 1$, then:
\begin{equation*}
 \|h_0 - \widehat h \|_{\infty} = O_p ( (n/\log n)^{-p/(2(p+\varsigma)+d)}).
\end{equation*}

(2) Severely ill-posed case ($\sigma_{JK}^{-1} = O(\exp(\frac{1}{2}
J^{\varsigma /d}))$ or $\varphi (t)=\exp(-t^{-\varsigma /2})$). If Assumption \ref{resid} holds with $\delta >0$, and $J = c_0'(\log n)^{d/\varsigma}$ for any $c_0' \in (0,1)$ with $K = c_0 J$ for some finite $c_0 \geq 1$, then:
\begin{equation*}
 \|h_0 - \widehat h \|_{\infty} = O_p ( (\log n)^{-p/\varsigma}).
\end{equation*}
\end{theorem}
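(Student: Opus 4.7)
\textbf{Proof plan for Theorem \ref{sup norm rate npiv new}.}

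The strategy is to combine the generic variance bound from Theorem \ref{sup norm rate gen new} with a sharp sup-norm bound on the bias term $\|h_0 - P_n h_0\|_\infty$ that exploits the local support of spline and wavelet sieves. First I would verify that Condition (ii) of Theorem \ref{sup norm rate gen new} holds. Under i.i.d.\ data, the remark following that theorem (together with Corollary \ref{troppcor} and Lemma \ref{Bconvi.i.d.}) yields $\|\widetilde B'\widetilde B/n - I_K\| + \|\widehat S - S\| = O_p(\sqrt{K(\log K)/n})$. The hypothesis $\sigma_{JK}^{-1} K\sqrt{(\log n)/n}\lesssim 1$ together with $\log K\lesssim \log n$ and $J\le K$ then gives $\sigma_{JK}^{-1}\sqrt{K(\log K)/n} = O(\sqrt{(\log n)/K})$, so Theorem \ref{sup norm rate gen new} applies and delivers $\|P_n h_0 - \widehat h\|_\infty = O_p(\sigma_{JK}^{-1}\sqrt{K(\log n)/n})$. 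It remains to show $\|h_0 - P_n h_0\|_\infty = O_p(J^{-p/d})$.

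For the bias, let $\pi_J h_0\in\Psi_J$ be the sup-norm optimal spline/wavelet approximant, so that Assumption \ref{parameter regression} and standard approximation theory for $\mbox{BSpl}$ and $\mbox{Wav}$ sieves yield $\|h_0 - \pi_J h_0\|_\infty\lesssim J^{-p/d}$. Because $P_n$ is idempotent on $\Psi_J$, $P_n(\pi_J h_0)=\pi_J h_0$, hence
\begin{equation*}
\|h_0 - P_n h_0\|_\infty \;\le\; \|h_0 - \pi_J h_0\|_\infty + \|P_n(h_0 - \pi_J h_0)\|_\infty.
\end{equation*}
The main work is the second term. I would introduce the ``population'' 2SLS projection $Q_J v(\cdot) = \widetilde\psi^J(\cdot)'[SS']^{-1}S\,E[\widetilde b^K(X)v(Y_2)]$ and split $P_n(h_0-\pi_J h_0) = Q_J(h_0-\pi_J h_0) + (P_n-Q_J)(h_0-\pi_J h_0)$. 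For the deterministic piece, using the local support of spline/wavelet bases (the Huang \citeyear{Huang2003} equivalence between $L^\infty$ norms of functions in $\Psi_J$ and $\ell_\infty$ norms of orthonormalized coefficients), together with the definition of $\sigma_{\infty,JK}$, one gets
\begin{equation*}
\|Q_J(h_0-\pi_J h_0)\|_\infty \;\lesssim\; \sigma_{\infty,JK}^{-1}\|\Pi_K T(h_0-\pi_J h_0)\|_\infty \;\lesssim\; \|h_0-\pi_J h_0\|_\infty \;\lesssim\; J^{-p/d},
\end{equation*}
where the second inequality is Assumption \ref{modulus}(ii).

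The stochastic remainder $(P_n - Q_J)(h_0-\pi_J h_0)$ is handled by expanding the matrix difference and controlling each term with the random matrix bounds $\|\widetilde B'\widetilde B/n - I_K\|$, $\|\widehat S - S\|$ already established, plus one additional exponential inequality (Bernstein with truncation under the $(2+\delta)$-th moment in Assumption \ref{resid}) applied to the centered sum $\widetilde B'V/n - E[\widetilde b^K(X)v(Y_2)]$ where $V=(v(Y_{2i}))$ and $v=h_0-\pi_J h_0$. Under the stated conditions on $J,K,n$ each of these terms is $o_p(J^{-p/d})$, giving $\|h_0 - P_n h_0\|_\infty = O_p(J^{-p/d})$. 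Combining with the variance bound yields the stated rate $J^{-p/d} + \sigma_{JK}^{-1}\sqrt{K(\log n)/n}$.

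Finally I would specialize. In the mildly ill-posed case, substituting $\sigma_{JK}^{-1}=J^{\varsigma/d}$ and $J\asymp K\asymp(n/\log n)^{d/(2(p+\varsigma)+d)}$ equates the two terms at $(n/\log n)^{-p/(2(p+\varsigma)+d)}$; a direct arithmetic check shows $\delta\ge d/(p+\varsigma)$ is exactly the condition ensuring $K\lesssim(n/\log n)^{\delta/(2+\delta)}$ as required for Theorem \ref{sup norm rate gen new}. In the severely ill-posed case, taking $J=c_0'(\log n)^{d/\varsigma}$ with $c_0'\in(0,1)$ makes $\sigma_{JK}^{-1}\sqrt{K(\log n)/n} = O(n^{c_0'/2-1/2}(\log n)^{1/2+d/(2\varsigma)}) = o((\log n)^{-p/\varsigma})$, so the bias term $J^{-p/d}\asymp(\log n)^{-p/\varsigma}$ dominates and no moment restriction beyond $\delta>0$ is needed. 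The main obstacle throughout is the bias analysis in the third paragraph: controlling the $L^\infty\to L^\infty$ operator norm of the random 2SLS projection $P_n$ requires both the local-support property of spline/wavelet sieves and a sup-norm analogue of the stability condition, and this is the step where the restriction to spline/wavelet bases is essential.
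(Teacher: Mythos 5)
Your overall strategy matches the paper's: variance via Theorem \ref{sup norm rate gen new} (with Conditions (i)--(ii) verified via Corollary \ref{troppcor}/Lemma \ref{Bconvi.i.d.}), bias via sharp spline/wavelet approximation and the sieve sup-norm measure of ill-posedness. The decomposition is slightly different---you split $P_n(h_0-\pi_J h_0)$ into a deterministic piece $Q_J(h_0-\pi_J h_0)$ plus a stochastic remainder, whereas the paper applies the definition of $\sigma_{\infty,JK}$ directly to $P_nh_0 - \pi_J h_0 \in \Psi_J$ and then decomposes $\Pi_K T(P_n h_0-\pi_J h_0)$---but both routes lead to the same two structural difficulties, and both require the same nontrivial ingredient.

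That ingredient is where the gap lies. Your step $\|Q_J(h_0-\pi_J h_0)\|_\infty \lesssim \sigma_{\infty,JK}^{-1}\|\Pi_K T(h_0-\pi_J h_0)\|_\infty$ does not follow from ``Huang's $L^\infty$--$\ell_\infty$ equivalence'' for spline/wavelet coefficients. Unwinding the definition of $\sigma_{\infty,JK}$, what you actually get is $\|Q_J(h_0-\pi_J h_0)\|_\infty \le \sigma_{\infty,JK}^{-1}\|\Pi_K T\,Q_J(h_0-\pi_J h_0)\|_\infty$, and a short computation shows $\Pi_K T\,Q_J(h_0-\pi_J h_0) = Q_K\bigl(\Pi_K T(h_0-\pi_J h_0)\bigr)$, where $Q_K u(x)=\widetilde b^K(x)'D\,E[\widetilde b^K(X)u(X)]$ with $D=S'[SS']^{-1}S$. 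Passing from $\|Q_K u\|_\infty$ to $\|u\|_\infty$ requires the \emph{sup-norm stability of the operator $Q_K$} (the paper's Lemma \ref{tilde P bounded}), which is a nontrivial adaptation of Theorem A.1 of \cite{Huang2003} exploiting both the local support of the basis \emph{and} the fact that $D$ is a $K\times K$ orthogonal projection matrix so that $Q_K$ is an $L^2$-contraction. The coefficient-norm equivalence by itself would give a Cauchy--Schwarz bound with a spurious $\sqrt K$ factor, which would destroy the rate. This lemma is the piece that makes the restriction to spline/wavelet sieves essential, and your proposal treats it as if it were the simpler $L^\infty$--$\ell_\infty$ equivalence.

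Two smaller points. First, your claim that the stochastic remainder is $o_p(J^{-p/d})$, hence $\|h_0-P_nh_0\|_\infty = O_p(J^{-p/d})$ alone, is stronger than what the conditions of the theorem deliver: the paper obtains $\|P_n h_0-\pi_J h_0\|_\infty=O_p\bigl(J^{-p/d}+\sigma_{JK}^{-1}\sqrt{K(\log n)/n}\bigr)$, with the second term arising from $\sigma_{\infty,JK}^{-1}\lesssim\sqrt J\,\sigma_{JK}^{-1}$, $p\ge d/2$, and the restriction $\sigma_{JK}^{-1}K\sqrt{(\log n)/n}\lesssim 1$; this is still fine because it is absorbed into the variance order, but you should not expect the bias term to be $O_p(J^{-p/d})$ on its own. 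Second, the exponential inequality controlling the centered empirical projection of $h_0-\pi_Jh_0$ does not need the truncation/$(2+\delta)$-moment device: $h_0-\pi_J h_0$ is bounded, so ordinary Bernstein applies directly (the paper's Lemma \ref{BH lemma}(ii)); the moment condition on $\epsilon_i$ is used only in the variance term of Theorem \ref{sup norm rate gen new}. Your specialization arithmetic for the mildly and severely ill-posed cases is correct.
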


\begin{remark}\label{L2 norm rates}
Under conditions similar to those for Theorem \ref{sup norm rate npiv new}, \citet*{Blundell2007}, \cite{ChenReiss} and \cite{ChenPouzo2012} previously obtained the following $L^2(Y_2) $-norm convergence rate for the sieve NPIV estimator:
\begin{equation*}
 \|h_0 - \widehat h \|_{L^2(Y_2)} = O_p ( J^{-p/d} + \tau_{2,2,J} \sqrt{K /n}  ).
\end{equation*}

(1) Mildly ill-posed case ($\tau_{2,2,J} = O(J^{\varsigma/d})$ or $\varphi (t)=t^{\varsigma}$),
\begin{equation*} \label{l2 rate mild}
 \|h_0 - \widehat h \|_{L^2(Y_2)} = O_p(n^{-p/(2(p+\varsigma)+d)})\,.
\end{equation*}

(2) Severely ill-posed case ($\tau_{2,2,J} = O(\exp(\frac{1}{2}
J^{\varsigma /d}))$ or $\varphi (t)=\exp(-t^{-\varsigma /2})$),
\begin{equation*} \label{l2 rate severe}
 \|h_0 - \widehat h\|_{L^2(Y_2)} = O_p((\log n)^{-p/\varsigma})\,.
\end{equation*}
\end{remark}
\cite{ChenReiss} show that these $L^2(Y_2) $-norm rates are optimal in the sense that they coincide with the minimax risk lower bound in $L^2(Y_2) $ loss. It is interesting to see that our sup-norm convergence rate is the same as the known optimal $L^2(Y_2) $-norm rate for the severely ill-posed case, and is only power of $\log(n)$ slower than the known optimal $L^2(Y_2) $-norm rate for the mildly ill-posed case. In the next subsection we will show that our sup-norm convergence rates are in fact optimal as well.

\subsection{Lower bounds on uniform convergence rates for NPIR and NPIV models}

For severely ill-posed NPIV models, \cite{ChenReiss} already showed that $(\log n)^{-p/\varsigma}$ is the minimax lower bound in $L^2(Y_2)$-norm loss uniformly over a class of functions that include the H\"older ball $B(p,L)$ as a subset.
Therefore, we have for a severely ill-posed NPIV model with $\delta_n = (\log n)^{-p/\varsigma}$,
\begin{equation*}
 \inf_{\widetilde h_n} \sup_{h \in B(p,L)} \mathbb P_h \left( \|h - \widetilde h_n \|_\infty \geq c\delta_n  \right) \geq
   \inf_{\widetilde h_n} \sup_{h \in B(p,L)} \mathbb P_h \left( \|h - \widetilde h_n \|_{L^2(Y_2)} \geq c\delta_n \right)
 \geq c'
\end{equation*}
where $\inf_{\widetilde h_n}$ denotes the infimum over all estimators based on a random sample of size $n$ drawn from the NPIV model, and the finite positive constants $c, c'$ do not depend on sample size $n$. This and Remark \ref{L2 norm rates}(2) together imply that the sieve NPIV estimator attains the optimal uniform convergence rate in the severely ill-posed case.

We next show that the sup-norm rate for the sieve NPIV estimator obtained in the mildly ill-posed case is also optimal. We begin by placing a primitive smoothness condition on the conditional expectation operator $T : L^2(Y_2) \to L^2(X)$.

\begin{assumption} \label{smoothness}
There is a $\varsigma > 0$ such that $\|Th \|_{L^2(X)} \lesssim \| h\|_{B^{-\varsigma}_{2,2}}$ for all $h \in B(p,L)$.
\end{assumption}

Assumption \ref{smoothness} is a special case of the so-called ``link condition'' in \cite{ChenReiss} for the mildly ill-posed case. It can be equivalently stated as: $\|Th \|_{L^2(X)}^{2} \lesssim
\sum_{j=1}^{\infty }\varphi (j^{-2/d})|E[h(Y_2)\widetilde \psi_{Jj} (Y_2)]|^{2} $ for all $h\in B(p,L)$, with $\varphi (t)=t^{\varsigma}$ for the mildly ill-posed case. Under this assumption, $n^{-p/(2(p+\varsigma)+d)}$ is the minimax risk lower bound uniformly over the H\"older ball $B(p,L)$ in $L^2 (Y_2)$-norm loss for the mildly ill-posed NPIR and NPIV models (see \cite{ChenReiss}). We next establish the corresponding minimax risk lower bound in sup-norm loss.

\begin{theorem} \label{npiv lower bound}
Let Assumption \ref{smoothness} hold for the NPIV model with a random sample $\{(Y_{1i},Y_{2i},X_i)\}_{i=1}^n$. Then:
\begin{equation*}
 \liminf_{n \to \infty} \inf_{\widetilde h_n} \sup_{h \in B(p,L)} \mathbb P_h \left( \|h - \widetilde h_n \|_\infty \geq c(n/\log n)^{-p/(2(p+\varsigma)+d)} \right) \geq c'>0,
\end{equation*}
where $\inf_{\widetilde h_n}$ denotes the infimum over all estimators based on the sample of size $n$, and the finite positive constants $c, c'$ do not depend on $n$.
\end{theorem}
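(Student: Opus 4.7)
The plan is to prove the lower bound by the standard multiple-hypothesis testing reduction via Fano's inequality, adapted to the mildly ill-posed NPIV problem in sup-norm loss. We construct many candidate $h$'s that are pairwise well separated in sup-norm but produce nearly indistinguishable NPIV data. The extra $\log n$ penalty relative to the $L^2$-minimax rate of \cite{ChenReiss} arises from the packing number $M$, whose logarithm replaces the constant budget of Le Cam's two-hypothesis bound; this is exactly the Stone (1982)--Lounici--Nickl template, transported to the ill-posed setting.

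\textbf{Construction of hypotheses.} Fix a compactly supported smooth bump $K$ with $\|K\|_\infty = 1$ and $\|K\|_{B^p_{\infty,\infty}} \leq 1$. Let $r \in (0,1)$ be a bandwidth to be chosen, set $M \asymp r^{-d}$, and place grid points $y_1,\ldots,y_M \in [0,1]^d$ with spacing $r$ so that the bumps $\phi_k(y) := c\, r^p K((y-y_k)/r)$ have pairwise disjoint supports. For $c > 0$ small enough, each $h_k := \phi_k$ lies in $B(p,L)$, and $\|h_k - h_{k'}\|_\infty = c\, r^p =: 2\delta_n$ for $k\neq k'$. Together with $h_0 \equiv 0$, these form the hypothesis set.

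\textbf{Construction of the NPIV law and the KL bound.} Fix a joint distribution of $(X,Y_2)$ on $[0,1]^{2d}$ with densities bounded above and below that induces an operator $T$ sharply satisfying Assumption~\ref{smoothness} (a convolution-type operator on the $d$-torus whose Fourier multipliers decay polynomially of order $\varsigma$ is the canonical choice, verified by diagonalization in a trigonometric or wavelet basis). Under hypothesis $h_k$, set $Y_1 = Th_k(X) + \xi$ with $\xi \sim N(0,1)$ independent of $(X,Y_2)$. Then $\epsilon := Y_1 - h_k(Y_2) = Th_k(X) - h_k(Y_2) + \xi$ satisfies $E[\epsilon|X] = 0$, so this is a valid NPIV data generating process; moreover $Y_2$ is conditionally independent of $Y_1$ given $X$, and the marginal law of $(X,Y_2)$ is free of $h$. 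Hence
\[
 \mathrm{KL}\bigl(\mathbb{P}_{h_k}^{\otimes n} \,\big\|\, \mathbb{P}_{h_{k'}}^{\otimes n}\bigr) = \tfrac{n}{2}\, \|T(h_k - h_{k'})\|_{L^2(X)}^2.
\]
By Assumption~\ref{smoothness} together with the standard wavelet characterization of $B^{-\varsigma}_{2,2}$, each single bump satisfies $\|T\phi_k\|_{L^2(X)}^2 \lesssim \|\phi_k\|_{B^{-\varsigma}_{2,2}}^2 \asymp r^{2p+d+2\varsigma}$, and since disjoint bumps are near-orthogonal in negative-smoothness Besov norm we obtain $\mathrm{KL} \lesssim n\, r^{2p+d+2\varsigma}$ uniformly over all pairs $k,k'$.

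\textbf{Rate optimization and conclusion.} Fano's inequality then yields a positive constant lower bound on the minimax testing error provided $\max_{k,k'} \mathrm{KL} \leq \alpha \log M$ for some $\alpha < 1$, i.e., provided $n\, r^{2p+d+2\varsigma} \lesssim \log(1/r) \asymp \log n$. Balancing gives $r \asymp (\log n / n)^{1/(2(p+\varsigma)+d)}$ and sup-norm separation $\delta_n \asymp r^p \asymp (n/\log n)^{-p/(2(p+\varsigma)+d)}$, which is exactly the claimed rate. The standard reduction from testing to estimation completes the proof: any estimator with $\|h_k - \widetilde h_n\|_\infty < \delta_n$ uniformly in probability would yield a consistent test by nearest-hypothesis classification, contradicting the Fano bound. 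The main technical obstacle is to exhibit an explicit NPIV distribution whose conditional expectation operator sharply satisfies Assumption~\ref{smoothness} so that the Besov-norm estimate on $T\phi_k$ is tight; everything else is routine bump-and-Besov scaling.
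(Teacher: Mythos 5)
Your proposal is essentially the same argument as the paper's: a multiple-hypothesis reduction (the paper invokes Theorem~2.5 of Tsybakov (2009), which is the Fano-type bound), using disjoint rescaled bumps of height $\asymp r^p$ on a grid of mesh $r$ to get sup-norm separation $\asymp r^p$, KL divergence controlled via Assumption~\ref{smoothness} and Besov scaling to give $\lesssim n\,r^{2(p+\varsigma)+d}$, and balancing against $\log M \asymp \log n$ to obtain $r \asymp ((\log n)/n)^{1/(2(p+\varsigma)+d)}$; the paper realizes the bumps as interior wavelets $\psi_{jm}$ at a single scale (which also makes the disjoint-support orthogonality exact rather than ``near''), and routes through the Gaussian NPIR submodel rather than writing out the embedding as you do, but these are presentational rather than substantive differences. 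The one point worth correcting is your closing concern about exhibiting a $T$ that ``sharply'' satisfies Assumption~\ref{smoothness}: tightness is not needed for a lower bound, because the argument only uses the one-sided inequality $\|T\phi_k\|^2_{L^2(X)} \lesssim \|\phi_k\|^2_{B^{-\varsigma}_{2,2}}$ to make the KL small enough, and an operator that smooths more strongly only makes the testing problem harder and the lower bound larger, so the proof goes through for any fixed $T$ satisfying the assumption.
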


As in \cite{ChenReiss}, Theorem \ref{npiv lower bound} is proved by (i) noting that the risk (in sup-norm loss) for the NPIV model is at least as large as the risk (in sup-norm loss) for the NPIR model, and (ii) calculating a lower bound (in sup-norm loss) for the NPIR model. We consider a Gaussian reduced-form NPIR model with known operator $T$, given by
\begin{equation}  \label{npir}
\begin{array}{rcl}
Y_{1i} & = & Th_0(X_{i}) + u_i, \quad i=1,...,n,\\
u_i|X_i & \sim & N(0,\sigma^2 (X_i) )\quad \mbox{with} \quad \inf_x \sigma^2 (x)\geq \sigma_0^2 >0\,.
\end{array}%
\end{equation}
Theorem \ref{npiv lower bound} therefore follows from a sup-norm analogue of Lemma 1 of \cite{ChenReiss} and the following theorem, which establishes a lower bound on minimax risk over H\"older classes under sup-norm loss for the NPIR model.

\begin{theorem}\label{npir lower bound}
Let Assumption \ref{smoothness} hold for the NPIR model (\ref{npir}) with a random sample $\{(Y_{1i},X_i)\}_{i=1}^n$. Then:
\begin{equation*}
 \liminf_{n \to \infty} \inf_{\widetilde h_n} \sup_{h \in B(p,L)} \mathbb P_h \left( \|h - \widetilde h_n \|_\infty \geq c (n/\log n)^{-p/(2(p+\varsigma)+d)} \right) \geq c'>0,
\end{equation*}
where $\inf_{\widetilde h_n}$ denotes the infimum over all estimators based on the sample of size $n$, and the finite positive constants $c, c'$ depend only on $p,L,d,\varsigma$ and $\sigma_0$.
\end{theorem}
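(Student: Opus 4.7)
The plan is to establish the lower bound for the NPIR model by the standard ``many hypotheses'' (Fano/Tsybakov) method, using a family of localized bumps indexed by a regular grid in $[0,1]^d$. Fix a smooth mother bump $\psi : \mathbb{R}^d \to \mathbb{R}$ supported in $(0,1)^d$ with $\psi(0)>0$, partition $[0,1]^d$ into a regular grid of $M^d$ cubes with centers $y_1,\ldots,y_{M^d}$, and define
\begin{equation*}
 h_k(y) = c_0 M^{-p}\,\psi\!\bigl(M(y - y_k)\bigr), \qquad k=1,\ldots,M^d,
\end{equation*}
together with $h_0 \equiv 0$. A direct computation shows $\|h_k\|_{B^{p}_{\infty,\infty}} \leq c_0 \|\psi\|_{B^p_{\infty,\infty}}$, so choosing $c_0$ small gives $h_k \in B(p,L)$. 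The bumps have pairwise disjoint support, so $\|h_k - h_{k'}\|_\infty \asymp M^{-p}$ for all $k\neq k'$; this will play the role of the separation $2 s_n$ in Tsybakov's lemma.

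Next I would bound the $n$-sample Kullback--Leibler divergence between $P_{h_k}^{\otimes n}$ and $P_{h_0}^{\otimes n}$ under the Gaussian NPIR model (\ref{npir}). Because $u_i | X_i \sim N(0,\sigma^2(X_i))$ with $\sigma^2 \geq \sigma_0^2$,
\begin{equation*}
 \mathrm{KL}\bigl(P_{h_k}^{\otimes n} \,\|\, P_{h_0}^{\otimes n}\bigr) = \frac{n}{2}\,E_X\!\left[\frac{(Th_k(X))^2}{\sigma^2(X)}\right] \leq \frac{n}{2\sigma_0^2}\,\|Th_k\|_{L^2(X)}^2.
\end{equation*}
Assumption \ref{smoothness} reduces matters to computing $\|h_k\|_{B^{-\varsigma}_{2,2}}$. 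Since $h_k$ is a single rescaled bump, a direct Fourier/Sobolev calculation gives $\|h_k\|_{B^{-\varsigma}_{2,2}}^2 \lesssim M^{-2(p+\varsigma)-d}$, so
\begin{equation*}
 \mathrm{KL}\bigl(P_{h_k}^{\otimes n} \,\|\, P_{h_0}^{\otimes n}\bigr) \lesssim \frac{n}{M^{2(p+\varsigma)+d}}.
\end{equation*}
Apply Tsybakov's Theorem 2.5 (or Korostelev--Tsybakov) with $M^d+1$ hypotheses, separation $2 s_n \asymp M^{-p}$, and the above KL bound: the minimax probability of error is bounded away from zero provided $n M^{-2(p+\varsigma)-d} \leq \alpha \log(M^d) = \alpha d \log M$ for some small $\alpha < 1/8$. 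Solving gives $M \asymp (n/\log n)^{1/(2(p+\varsigma)+d)}$, hence $s_n \asymp (n/\log n)^{-p/(2(p+\varsigma)+d)}$, matching the stated rate.

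The main obstacle is the sharp evaluation of $\|h_k\|_{B^{-\varsigma}_{2,2}}$: a naive bound $\|h_k\|_{B^{-\varsigma}_{2,2}} \lesssim \|h_k\|_{L^2} \asymp M^{-p-d/2}$ only gives KL $\lesssim n M^{-2p-d}$, which would yield the Stone rate (no loss from ill-posedness) and is far too loose. One must genuinely exploit the negative-smoothness factor $M^{-\varsigma}$. The cleanest route is on $\mathbb{R}^d$ via the equivalence $B^{-\varsigma}_{2,2} \cong H^{-\varsigma}$: since $\widehat{h_k}(\xi) = c_0 M^{-p-d}\,\widehat{\psi}(\xi/M)e^{-i y_k\cdot\xi}$, the change of variables $\eta = \xi/M$ yields $\|h_k\|_{H^{-\varsigma}}^2 \asymp M^{-2p-d}\int(1+M^2|\eta|^2)^{-\varsigma}|\widehat{\psi}(\eta)|^2 d\eta \asymp M^{-2(p+\varsigma)-d}$, using that $\widehat{\psi}$ is Schwartz and essentially supported on $|\eta|\asymp 1$. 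Equivalently, an orthonormal wavelet characterization of $B^{-\varsigma}_{2,2}$ shows the coefficients of $h_k$ are effectively concentrated at scale $j \asymp \log_2 M$ with squared $\ell^2$ mass $\asymp M^{-2p-d}$, and the Besov weight $2^{-2j\varsigma}$ at that scale contributes the required $M^{-2\varsigma}$ factor. Once this bound is secured, the remaining pieces (verification that $h_k \in B(p,L)$, disjointness of supports, the Gaussian KL formula, and invocation of Tsybakov's lemma) are routine.
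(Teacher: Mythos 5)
Your overall strategy — a family of localized, disjointly-supported perturbations, a Kullback--Leibler bound via the smoothing Assumption~\ref{smoothness}, and an application of Tsybakov's many-hypotheses lemma (Theorem~2.5) with $M^d$ hypotheses separated by $\asymp M^{-p}$ in sup norm — is exactly the structure of the paper's proof, and the scaling $M \asymp (n/\log n)^{1/(2(p+\varsigma)+d)}$ is correct. However, there is a genuine gap in the key Besov-norm computation, and the way the paper avoids it is precisely by choosing the perturbations to be wavelets rather than a generic bump.

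The issue is your claim that $\|h_k\|_{H^{-\varsigma}}^2 \asymp M^{-2(p+\varsigma)-d}$ ``using that $\widehat\psi$ is Schwartz and essentially supported on $|\eta|\asymp 1$.'' For a generic smooth bump with $\psi(0)>0$ (and in particular $\int\psi\neq 0$), the Fourier transform $\widehat\psi$ does \emph{not} vanish near the origin; it is maximal there. Carrying out your own change of variables,
\begin{equation*}
 \|h_k\|_{H^{-\varsigma}}^2 \asymp M^{-2p-d}\int_{\mathbb R^d}(1+M^2|\eta|^2)^{-\varsigma}\,|\widehat\psi(\eta)|^2\,d\eta,
\end{equation*}
and splitting at $|\eta|=1/M$, the ball $|\eta|<1/M$ contributes $\asymp M^{-d}\,|\widehat\psi(0)|^2$, not $M^{-2\varsigma}$. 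Thus whenever $\varsigma > d/2$ one gets $\|h_k\|_{H^{-\varsigma}}^2 \asymp M^{-2p-2d}$, strictly larger than the claimed $M^{-2(p+\varsigma)-d}$, so the KL divergence is underestimated and the resulting lower-bound rate is strictly worse than $(n/\log n)^{-p/(2(p+\varsigma)+d)}$. Your wavelet-characterization remark has the same problem: for a bump without vanishing moments, the scaling-function coefficient of $h_k$ is of size $\asymp M^{-p-d}$ and is unweighted in the $b^{-\varsigma}_{2,2}$ norm; for $\varsigma>d/2$ this coarse-scale term dominates the scale-$j$ contribution $M^{-(p+\varsigma+d/2)}$, so the coefficients are \emph{not} ``effectively concentrated at scale $j\asymp\log_2 M$.''

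The fix is to require $\psi$ to have sufficiently many vanishing moments — concretely, $N > \varsigma - d/2$ suffices, since then $\widehat\psi(\eta)\lesssim |\eta|^N$ near the origin and your Fourier calculation does give $\asymp M^{-2\varsigma}$. The paper's proof builds this in automatically by taking the perturbations $h_m = h_0 + c_0\,2^{-j(p+d/2)}\psi_{jm}$ to be rescaled interior CDJV wavelets of regularity $\gamma > \max\{p,\varsigma\}$: the wavelet $\psi_{jm}$ is itself a basis element of the Besov scale, so $\|\psi_{jm}\|_{B^{-\varsigma}_{2,2}}\asymp 2^{-j\varsigma}$ holds trivially from the sequence-norm characterization, with no need for the Fourier estimate at all. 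With that substitution (``wavelet'' in place of ``smooth bump with $\psi(0)>0$''), your argument lines up with the paper's.
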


\section{Optimal uniform convergence rates for sieve LS estimators} \label{reg sec}

The standard nonparametric regression model can be recovered as a special case of (\ref{npiv}) in which there is no endogeneity, i.e. $Y_2 = X$ and
\begin{equation}  \label{npreg}
\begin{array}{rcl}
Y_{1i} & = & h_0(X_i) + \epsilon_i \\
E[\epsilon_i |X_i] & = & 0%
\end{array}%
\end{equation}
in which case $h_0(x) = E[Y_{1i}|X_i = x]$.

\cite{Stone1982} (also see \cite{Tsybakov2009}) establishes that $(n/\log n)^{-p/(2p +d)}$ is the minimax risk lower bound in sup-norm loss for the nonparametric LS regression model (\ref{npreg}) with $h_0 \in B(p,L)$. In this section we apply the general upper bound (Theorem \ref{sup norm rate gen new}) to show that spline and wavelet sieve LS estimators attain this minimax lower bound for weakly dependent data allowing for heavy-tailed error terms $\epsilon_i$.

Our proof proceeds by noticing that the sieve LS regression estimator
\begin{equation}
 \widehat h(x) = b^K(x)(B'B)^-B'Y
\end{equation}
obtains as a special case of the NPIV estimator by setting $Y_2 = X$, $\psi^J = b^K$, $J = K$ and $\gamma = \gamma_x$. In this setting, the quantity $P_n h_0(x)$ just reduces to the orthogonal projection of $h_0$ onto the sieve space $B_K$ under the inner product induced by the empirical distribution, viz.
\begin{equation}
 P_n h_0(x)=\widetilde b^K(x)(\widetilde B'\widetilde B/n)^{-} \widetilde B' H_0/n\,.
\end{equation}
Moreover, in this case the $J \times K$ matrix $S$ defined in (\ref{S def}) reduces to the $K \times K$ identity matrix $I_K$ and its smallest singular value is unity (whence $\sigma_{JK} = 1$). Therefore, the general calculation presented in Theorem \ref{sup norm rate gen new} can be used to control the ``variance term'' $\|\widehat h - P_n h_0\|_{\infty}$. The ``bias term'' $\|P_n h_0 - h_0\|_{\infty}$ is controlled as in \cite{Huang2003}. It is worth emphasizing that no explicit weak dependence condition is placed on the regressors $\{X_i\}_{i=-\infty}^\infty$. Instead, this is implicitly captured by Condition (ii) on convergence of $\widetilde B'\widetilde B/n - I_K$.

\begin{theorem}\label{sup norm rate regression}
Let Assumptions \ref{data}, \ref{resid}, \ref{b sieve} (with $B_K = \mbox{BSpl}(K,[0,1]^{d},{\gamma} )~or~ \mbox{Wav}(K,[0,1]^{d}, {\gamma})$) and \ref{parameter regression} hold for Model (\ref{npreg}). Then:
\begin{equation*}
 \|\widehat h - h_0\|_{\infty} = O_p ( K^{-p/d} + \sqrt{K(\log n)/n} )
\end{equation*}
provided $n,K \to \infty$, and
\begin{enumerate}[(i)]
\item $K \lesssim (n/\log n)^{\delta/(2+\delta)}$ and $\sqrt{K(\log n)/n} \lesssim 1$
\item $ \|(\widetilde B'\widetilde B/n) - I_K\|= O_p(\sqrt{(\log n) /K}) = o_p(1)$.
\end{enumerate}
\end{theorem}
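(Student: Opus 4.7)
The plan is to derive the result as a direct corollary of Theorem \ref{sup norm rate gen new} combined with an approximation argument in the style of \cite{Huang2003} for the bias term. Identify the LS setting with the NPIV setup by taking $Y_2 = X$, $\psi^J = b^K$, $J = K$, and $\gamma_x = \gamma$. Under this identification $S = E[\widetilde\psi^J(Y_2)\widetilde b^K(X)'] = I_K$, so $\sigma_{JK} = 1$; moreover $\widehat S = \widetilde\Psi'\widetilde B/n = \widetilde B'\widetilde B/n$. Hence Condition (i) of Theorem \ref{sup norm rate gen new} reduces to Condition (i) of the present theorem, and its Condition (ii) reduces to Condition (ii) here. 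Assumptions \ref{data}, \ref{resid}, \ref{f sieve}, and \ref{b sieve} of Theorem \ref{sup norm rate gen new} all hold (Assumption \ref{f sieve} coincides with Assumption \ref{b sieve} under this identification). Invoking Theorem \ref{sup norm rate gen new} yields
$$\|\widehat h - h_0\|_\infty \leq \|h_0 - P_n h_0\|_\infty + O_p\bigl(\sqrt{K(\log n)/n}\bigr),$$
which reduces the task to establishing the bias bound $\|h_0 - P_n h_0\|_\infty = O_p(K^{-p/d})$.

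For the bias, by Assumption \ref{parameter regression} and the approximation properties of the B-spline or wavelet sieve $B_K$, there exists $\pi_K h_0 \in B_K$ with $\|h_0 - \pi_K h_0\|_\infty = O(K^{-p/d})$. Since $P_n$ fixes elements of $B_K$, one has $P_n\pi_K h_0 = \pi_K h_0$, and the decomposition
$$h_0 - P_n h_0 \;=\; (h_0 - \pi_K h_0) \;-\; P_n(h_0 - \pi_K h_0)$$
reduces the bias analysis to controlling $\|P_n g\|_\infty \lesssim \|g\|_\infty$ with high probability, applied to $g = h_0 - \pi_K h_0$.

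This uniform sup-norm boundedness of the empirical projection is exactly the argument in the appendix of \cite{Huang2003} and relies critically on the local support structure of B-spline and wavelet bases. In outline, for such bases the deterministic $L^2(X)$-projection onto $B_K$ has uniformly bounded sup-norm operator norm (bounded Lebesgue constants); on the event $\{\|\widetilde B'\widetilde B/n - I_K\| \leq 1/2\}$, which holds with probability approaching one by Condition (ii), a simple perturbation estimate transfers this boundedness to the empirical projection $P_n$, giving $\|P_n g\|_\infty \lesssim \|g\|_\infty$ on that event. Combining with $\|h_0 - \pi_K h_0\|_\infty = O(K^{-p/d})$ yields the bias bound and completes the proof.

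The main obstacle is this last step — transferring $L^\infty$-boundedness from the deterministic projection onto $B_K$ to the random empirical projection $P_n$. It is precisely this step that exploits the local support of spline/wavelet sieves and rules out globally supported bases such as orthogonal polynomials (whose Lebesgue constants diverge), as noted in the footnote preceding Theorem \ref{sup norm rate npiv new}. Once established, the rest of the argument is bookkeeping: the variance piece comes for free from Theorem \ref{sup norm rate gen new}, and the approximation rate $K^{-p/d}$ is standard for B-spline/wavelet approximation of Hölder-smooth functions.
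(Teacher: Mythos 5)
Your proposal matches the paper's proof: the variance term follows from Theorem \ref{sup norm rate gen new} with $\sigma_{JK}=1$, and the bias term is controlled via the sup-norm stability of the empirical projection onto spline/wavelet sieves from \cite{Huang2003}, with the wpa1 equivalence of empirical and theoretical $L^2(X)$ norms over $B_K$ (guaranteed by Condition (ii)) as the key input. One minor caveat on phrasing: Huang's Theorem A.1 and Corollary A.1 establish sup-norm stability of the empirical projection directly by exploiting the local support of spline/wavelet bases under empirical identifiability, rather than by a ``perturbation'' of the deterministic $L^2(X)$-projection, but this does not affect the validity of your argument.
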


Condition (ii) is satisfied by applying Lemma \ref{Bconvi.i.d.} for i.i.d. data and Lemma \ref{Bconvbeta} for weakly dependent data. 
Theorem \ref{sup norm rate regression} shows that spline and wavelet sieve LS estimators can achieve this minimax lower bound for weakly dependent data.

\begin{corollary}\label{regcor}
Let Assumptions \ref{data}, \ref{resid} (with $\delta \geq d/p$), \ref{b sieve} (with $B_K = \mbox{BSpl}(K,[0,1]^{d},{\gamma} )~or~ \mbox{Wav}(K,[0,1]^{d}, {\gamma})$) and \ref{parameter regression} hold for Model (\ref{npreg}). If  $K \asymp (n/\log n)^{d/(2p +d)}$ then:
\begin{equation*}
 \|\widehat h - h_0\|_{\infty} = O_p ( (n/\log n)^{-p/(2p +d)} )
\end{equation*}
provided that one of the followings is satisfied
\begin{enumerate}[(1)]
\item the regressors are i.i.d.;
\item the regressors are exponentially $\beta$-mixing and $d < 2p$;
\item the regressors are algebraically $\beta$-mixing at rate $\gamma$ and  $(2+\gamma)d < 2 \gamma p$.
\end{enumerate}
\end{corollary}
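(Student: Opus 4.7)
The corollary is a direct specialization of Theorem \ref{sup norm rate regression} once the specific choice of $K$ is plugged in and the three weak dependence scenarios are shown to verify Condition (ii) of that theorem. My plan is to (a) check that the rate $K \asymp (n/\log n)^{d/(2p+d)}$ balances the bias--variance decomposition in Theorem \ref{sup norm rate regression}, (b) check that Condition (i) of that theorem is satisfied under the moment restriction $\delta \geq d/p$, and (c) verify Condition (ii) in each of the three cases by invoking Lemmas \ref{Bconvi.i.d.} and \ref{Bconvbeta}.

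For step (a), with $K \asymp (n/\log n)^{d/(2p+d)}$ one has $K^{-p/d} \asymp (n/\log n)^{-p/(2p+d)}$ and $\sqrt{K(\log n)/n} \asymp (n/\log n)^{-p/(2p+d)}$, so both the bias-type and variance-type terms in the conclusion of Theorem \ref{sup norm rate regression} match, giving the stated rate. For step (b), the assumption $\delta \geq d/p$ gives $\delta/(2+\delta) \geq d/(2p+d)$, so $K \asymp (n/\log n)^{d/(2p+d)} \lesssim (n/\log n)^{\delta/(2+\delta)}$ holds; moreover $\sqrt{K(\log n)/n} \to 0$ trivially, so Condition (i) of Theorem \ref{sup norm rate regression} holds.

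The main content of the proof is step (c). For case (1), $\{X_i\}$ is i.i.d., so Lemma \ref{Bconvi.i.d.} gives $\|(\widetilde B'\widetilde B/n) - I_K\| = O_p(\sqrt{K(\log K)/n})$, and I need to verify this is $o(\sqrt{(\log n)/K})$, equivalently $K = o(\sqrt{n/\log n})$ up to logarithmic factors. Since $K \asymp (n/\log n)^{d/(2p+d)}$ and Assumption \ref{parameter regression} enforces $p \geq d/2$, we have $d/(2p+d) \leq 1/2$, so this bound holds. For cases (2) and (3), Lemma \ref{Bconvbeta} provides a corresponding bound for $\beta$-mixing regressors, whose rate depends on the mixing coefficients; the restrictions $d < 2p$ (exponential mixing) and $(2+\gamma)d < 2\gamma p$ (algebraic mixing at rate $\gamma$) are precisely the thresholds that make the random-matrix deviation bound from Lemma \ref{Bconvbeta} smaller than $\sqrt{(\log n)/K}$ at the chosen $K$.

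The one step likely to require care is matching the algebraic-mixing case: one has to plug in the deviation bound from Lemma \ref{Bconvbeta} (which scales as roughly $K \cdot n^{-\gamma/(2+\gamma)}$ up to logarithms for algebraically $\beta$-mixing data, by the exponential inequality of Section \ref{ei sec}), set this equal to $\sqrt{(\log n)/K}$ and check that the threshold $(2+\gamma)d < 2\gamma p$ is exactly what makes the bound $o_p(1)$ at $K \asymp (n/\log n)^{d/(2p+d)}$. Once Condition (ii) is verified in all three cases, the conclusion of Theorem \ref{sup norm rate regression} yields the stated rate directly, with no further calculation needed.
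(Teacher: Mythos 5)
Your proposal follows exactly the same route as the paper's own (extremely terse) proof: plug the chosen $K$ into Theorem~\ref{sup norm rate regression}, check Condition~(i) using $\delta\geq d/p$, and verify Condition~(ii) in each of the three dependence regimes by appealing to Lemma~\ref{Bconvi.i.d.} (i.i.d.\ case) and Lemma~\ref{Bconvbeta} ($\beta$-mixing cases). Steps (a), (b), and the i.i.d.\ branch of step (c) are worked out correctly and in the right order.

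One small correction to your commentary on the algebraic-mixing case: for tensor-product splines or wavelets $\zeta_0(K)\asymp\sqrt K$, so Lemma~\ref{Bconvbeta} with blocks of length $q\asymp n^{1/(1+\gamma)}$ gives
\[
\|\widetilde B'\widetilde B/n - I_K\| = O_p\!\left(\sqrt{K\,q\,(\log K)/n}\right) \asymp \sqrt{K\log K}\;n^{-\gamma/(2(1+\gamma))},
\]
not $K\cdot n^{-\gamma/(2+\gamma)}$ as you write. If one took your stated rate at face value and solved $K\cdot n^{-\gamma/(2+\gamma)} \lesssim \sqrt{(\log n)/K}$ at $K\asymp (n/\log n)^{d/(2p+d)}$, the resulting threshold would not be $(2+\gamma)d<2\gamma p$. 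With the correct rate, however, the comparison $K\,q\,(\log K)/n \lesssim (\log n)/K$ reduces to $K^2 n^{1/(1+\gamma)} \lesssim n$, i.e., $2d/(2p+d) < \gamma/(1+\gamma)$, which rearranges precisely to $(2+\gamma)d < 2\gamma p$. So the threshold you quoted is right but the intermediate rate you quoted is not; since you flagged this step as the one requiring care, the slip is in the exposition rather than the logic of the proof.
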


Corollary \ref{regcor} states that for i.i.d. data, Stone's optimal sup-norm convergence rate is achieved by spline and wavelet LS estimators whenever $\delta \geq d/p$ and $d \leq 2p$ (Assumption \ref{parameter regression}). If the regressors are exponentially $\beta$-mixing the optimal rate of convergence is achieved with $\delta \geq d/p$ and $d < 2p$. The restrictions $\delta \geq d/p$ and $(2+\gamma)d < 2 \gamma p$ for algebraically $\beta$-mixing (at a rate $\gamma$) reduces naturally towards the exponentially mixing conditions as the dependence becomes weaker (i.e. $\gamma$ becomes larger). In all cases, a smoother function (i.e., bigger $p$) means a lower value of $\delta$, and therefore heaver-tailed error terms $\epsilon_i$, are permitted while still obtaining the optimal sup-norm convergence rate. In particular this is achieved with $\delta =d/p \leq 2$ for i.i.d. data. Recently, \cite{BCK2013} require that the conditional $(2+\eta)$th moment (for some $\eta >0$) of $\epsilon_i$ be uniformly bounded for spline LS regression estimators to achieve the optimal sup-norm rate for i.i.d. data.\footnote{Chen would like to thank Jianhua Huang for working together on an earlier draft that does achieve the optimal sup-norm rate for a polynomial spline LS estimator with i.i.d. data, but under a stronger condition that $E[\epsilon^4_i |X_i=x]$ is uniformly bounded in $x$.}
Uniform convergence rates of series LS estimators have also been studied by \cite{Newey1997}, \cite{deJong2002}, \cite{Song2008}, \cite{LeeRobinson} and others, but the sup-norm rates obtained in these papers are slower than the minimax risk lower bound in sup-norm loss of \cite{Stone1982}.\footnote{See, e.g., \cite{hansen-ET}, \cite{masry}, \cite{CattaneoFarrell} and the references therein for the optimal sup-norm convergence rates of a conditional mean function via the kernel, local linear regression and partitioning estimators of a conditional mean function.}
Our result is the first such optimal sup-norm rate result for a sieve nonparametric LS estimator allowing for weakly-dependent data with heavy-tailed error terms. It should be very useful for nonparametric estimation of financial time-series models that have heavy-tailed error terms.

\section{Useful results on random matrices}\label{ei sec}

\subsection{Convergence rates for sums of dependent random matrices}\label{ineq sec}

In this subsection a Bernstein inequality for sums of independent random matrices due to \cite{Tropp2012} is adapted to obtain convergence rates for sums of random matrices formed from $\beta$-mixing (absolutely regular) sequences, where the dimension, norm, and variance measure of the random matrices are allowed to grow with the sample size. These inequalities are particularly useful for establishing convergence rates for semi/nonparametric sieve estimators with weakly-dependent data. We first recall a result of \cite{Tropp2012}.

\begin{theorem}[\cite{Tropp2012}]\label{troppthm}
Let $\{\Xi_i\}_{i=1}^n$ be a finite sequence of independent random matrices with dimensions $d_1 \times d_2$. Assume $E[\Xi_i] = 0$ for each $i$ and $\max_{1 \leq i \leq n} \|\Xi_i\| \leq R_n$, and define
\begin{equation} \notag
 \sigma^2_n = \max\left\{ \left\| \sum_{i=1}^n E[\Xi_i\Xi_i'] \right\|, \left\| \sum_{i=1}^n E[\Xi_i'\Xi_i] \right\| \right\} \,.
\end{equation}
Then for all $t \geq 0$,
\begin{equation} \notag
 \mathbb P \left( \left\| \sum_{i=1}^n \Xi_i \right\|  \geq t \right) \leq (d_1 + d_2) \exp \left( \frac{-t^2/2}{\sigma_n^2 + R_n t/3} \right)\,.
\end{equation}
\end{theorem}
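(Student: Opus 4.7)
The plan is to establish Tropp's matrix Bernstein inequality via the Laplace transform method, adapted to the matrix setting using Lieb's concavity theorem. First I would reduce the rectangular case to the Hermitian case via the Hermitian dilation: for each rectangular $\Xi_i$, form the self-adjoint $(d_1+d_2)\times(d_1+d_2)$ block matrix with off-diagonal blocks $\Xi_i$ and $\Xi_i'$ and zero diagonal blocks. Its spectral norm equals $\|\Xi_i\|$, so the almost-sure bound $R_n$ is preserved, and its square is block-diagonal with blocks $\Xi_i\Xi_i'$ and $\Xi_i'\Xi_i$, so the variance proxy $\sigma_n^2$ also transfers cleanly. The problem thus reduces to proving a Hermitian matrix Bernstein inequality for a sum $\sum_i \Upsilon_i$ of $d\times d$ self-adjoint summands with $d=d_1+d_2$, applied once in each tail direction.

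For Hermitian mean-zero $\Upsilon_i$ with $\|\Upsilon_i\| \leq R_n$, I would apply the matrix Markov/Laplace bound: for $\theta > 0$,
\[
 \mathbb{P}\bigl(\lambda_{\max}(\textstyle\sum_i \Upsilon_i) \geq t\bigr) \leq e^{-\theta t}\,\mathbb{E}\bigl[\mathrm{tr}\exp(\theta \textstyle\sum_i \Upsilon_i)\bigr].
\]
The key nontrivial step is to linearize the trace-MGF of a sum. By Lieb's concavity theorem, the map $A \mapsto \mathrm{tr}\exp(H + \log A)$ is concave on positive-definite $A$ for any fixed Hermitian $H$; iterating Jensen's inequality conditionally on the $\Upsilon_j$'s in turn yields the master tail bound
\[
 \mathbb{E}\bigl[\mathrm{tr}\exp(\theta \textstyle\sum_i \Upsilon_i)\bigr] \leq \mathrm{tr}\exp\Bigl(\textstyle\sum_i \log \mathbb{E}[e^{\theta \Upsilon_i}]\Bigr).
\]

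Next I would bound each matrix MGF. The scalar inequality $e^x \leq 1 + x + (x^2/2)/(1 - x/3)$ for $x < 3$ transfers by the spectral mapping theorem to $\mathbb{E}[e^{\theta \Upsilon_i}] \preceq I + \tfrac{\theta^2/2}{1-\theta R_n/3}\,\mathbb{E}[\Upsilon_i^2]$ for $0 < \theta < 3/R_n$, and operator monotonicity of the logarithm ($\log(I+A) \preceq A$) then gives
\[
 \log \mathbb{E}[e^{\theta \Upsilon_i}] \preceq \frac{\theta^2/2}{1-\theta R_n/3}\,\mathbb{E}[\Upsilon_i^2].
\]
Summing, taking trace exponentials, and using $\mathrm{tr}\exp(cM) \leq d\,e^{c\lambda_{\max}(M)}$ together with the definition of $\sigma_n^2$ produces the upper bound $d \exp\{\theta^2 \sigma_n^2 /(2(1-\theta R_n/3))-\theta t\}$. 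Optimizing via $\theta = t/(\sigma_n^2 + R_n t/3)$ yields the Bernstein rate; applying the same argument to $-\Upsilon_i$ handles the lower tail, and undoing the dilation produces the stated two-sided bound with dimension prefactor $d_1+d_2$.

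The main obstacle is the master tail bound in the second paragraph. Without Lieb's concavity theorem one cannot, in the noncommutative setting, replace $\mathbb{E}\,\mathrm{tr}\exp(\theta \sum_i \Upsilon_i)$ by a single trace exponential of a sum of log-MGFs; a naive multiplicative factorization fails because $e^{A+B} \neq e^A e^B$ when $A,B$ do not commute, which is precisely what breaks the direct scalar Chernoff argument. Lieb's theorem is the deep matrix-analytic input that rescues it, and everything else is a careful but mechanical translation of the real-line Bernstein proof via the functional calculus.
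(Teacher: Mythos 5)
The paper does not prove Theorem~\ref{troppthm} at all: it is cited verbatim from \cite{Tropp2012} as a known exponential inequality, so there is no ``paper's own proof'' for you to match. Your sketch correctly reproduces Tropp's original argument — Hermitian dilation, matrix Laplace/Markov bound, Lieb's concavity theorem to get subadditivity of matrix cumulant generating functions, the scalar bound $e^x \leq 1 + x + \tfrac{x^2/2}{1-x/3}$ transferred through the functional calculus, operator monotonicity of $\log$, and the final optimization $\theta = t/(\sigma_n^2 + R_n t/3)$. The one wrinkle is the last step: applying the one-tail bound separately to $\Upsilon_i$ and $-\Upsilon_i$ would give prefactor $2(d_1+d_2)$, not $d_1+d_2$. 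The correct bookkeeping uses the fact that the spectrum of the Hermitian dilation is symmetric about zero, so $\|\sum_i \Xi_i\| = \lambda_{\max}\bigl(\sum_i H(\Xi_i)\bigr)$ already and only the single-tail bound is needed; this is what delivers the stated $d_1+d_2$ constant. Aside from that small imprecision, your route is exactly the one in Tropp's paper, and the intermediate bounds (the variance proxy transferring cleanly under dilation to $\sigma_n^2$, the bound $\text{tr}\,e^{cM}\leq (d_1+d_2)\,e^{c\lambda_{\max}(M)}$, and the algebra showing the exponent collapses to $-\tfrac{t^2/2}{\sigma_n^2 + R_n t/3}$) are all correct.
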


\begin{corollary}\label{troppcor}
Under the conditions of Theorem \ref{troppthm}, if $R_n \sqrt{\log(d_1+d_2)} = o(\sigma_n)$ then
\begin{equation} \notag
 \left\| \sum_{i=1}^n \Xi_{i,n} \right\| = O_p ( \sigma_n \sqrt{\log(d_1+d_2)} )\,.
\end{equation}
\end{corollary}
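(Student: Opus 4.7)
The plan is to apply Theorem \ref{troppthm} directly with $t = t_n := M \sigma_n \sqrt{\log(d_1+d_2)}$, where $M>0$ is a constant to be chosen, and show that the resulting tail probability can be made uniformly small by taking $M$ large. The proposal is essentially a routine calibration of the Bernstein bound against the $O_p$ definition.

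First, I would bound the denominator in the exponent of Theorem \ref{troppthm}. Since
\begin{equation*}
R_n t_n / 3 \;=\; \frac{M}{3}\sigma_n^2 \cdot \frac{R_n\sqrt{\log(d_1+d_2)}}{\sigma_n},
\end{equation*}
the hypothesis $R_n\sqrt{\log(d_1+d_2)}=o(\sigma_n)$ implies that $R_n t_n/3 \leq \sigma_n^2$ for all $n$ sufficiently large (depending on $M$). Therefore, for all large $n$,
\begin{equation*}
\sigma_n^2 + R_n t_n /3 \;\leq\; 2\sigma_n^2.
\end{equation*}

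Second, I would plug this into Theorem \ref{troppthm} to obtain, for all sufficiently large $n$,
\begin{equation*}
\mathbb{P}\!\left(\left\|\sum_{i=1}^n \Xi_i\right\| \geq t_n\right)
 \;\leq\; (d_1+d_2)\exp\!\left(-\frac{t_n^2/2}{2\sigma_n^2}\right)
 \;=\; (d_1+d_2)\exp\!\left(-\tfrac{M^2}{4}\log(d_1+d_2)\right)
 \;=\; (d_1+d_2)^{1-M^2/4}.
\end{equation*}

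Finally, since $d_1+d_2 \geq 2$, the bound $(d_1+d_2)^{1-M^2/4} \leq 2^{1-M^2/4}$ can be made smaller than any prescribed $\varepsilon>0$ by choosing $M$ sufficiently large. This verifies the $O_p$ statement: for every $\varepsilon>0$ there exists $M=M(\varepsilon)$ and $n_0=n_0(\varepsilon,M)$ such that $\mathbb{P}(\|\sum_i \Xi_i\|\geq M\sigma_n\sqrt{\log(d_1+d_2)})\leq \varepsilon$ for all $n\geq n_0$, which is exactly $\|\sum_i \Xi_i\| = O_p(\sigma_n\sqrt{\log(d_1+d_2)})$.

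There is no genuine obstacle here; the corollary is a direct calibration of Theorem \ref{troppthm}. The only point requiring mild care is ensuring the ``linear'' part $R_n t/3$ of the Bernstein denominator does not dominate at the chosen scale $t_n$, which is precisely the role of the assumption $R_n\sqrt{\log(d_1+d_2)}=o(\sigma_n)$; and noting that $d_1+d_2\geq 2$ so that the logarithmic factor is bounded below away from zero, allowing the sub-Gaussian part of the bound to control the prefactor $(d_1+d_2)$ for large $M$.
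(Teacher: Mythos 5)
Your proof is correct and is exactly the argument the paper uses: the paper's proof is a one-line remark saying to apply Theorem \ref{troppthm} with $t = C\sigma_n\sqrt{\log(d_1+d_2)}$ for large $C$ and use the hypothesis $R_n\sqrt{\log(d_1+d_2)} = o(\sigma_n)$, which is precisely what you have spelled out in detail.
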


We now provide a version of Theorem \ref{troppthm} and Corollary \ref{troppcor} for matrix-valued functions of $\beta$-mixing  sequences.  The $\beta$-mixing coefficient between two $\sigma$-algebras $%
\mathcal{A}$ and $\mathcal{B}$ is defined as
\begin{equation}
2\beta(\mathcal{A},\mathcal{B}) = \sup \sum_{(i,j) \in I \times J} |\mathbb{P}%
(A_i \cap B_j) - \mathbb{P}(A_i) \mathbb{P}(B_j)|
\end{equation}
with the supremum taken over all finite partitions $\{A_i\}_{i \in I}\subset
\mathcal{A}$ and $\{B_J\}_{j \in J} \subset \mathcal{B}$ %
\citep*{DoukhanMassartRio}. The $q$th $\beta$-mixing coefficient of $%
\{X_i\}_{i=-\infty}^\infty$ is defined as
\begin{equation}
\beta(q) = \sup_i
\beta(\sigma(\ldots,X_{i-1},X_i),\sigma(X_{i+q},X_{i+q+1},\ldots))\,.
\end{equation}
The process $\{X_i\}_{i=-\infty}^\infty$ is said to be \emph{algebraically $%
\beta$-mixing} at rate $\gamma$ if $q^\gamma \beta(q) = o(1)$ for some $%
\gamma > 1$, and \emph{geometrically $\beta$-mixing} if $\beta(q) \leq c
\exp(-\gamma q)$ for some $\gamma > 0$ and $c \geq 0$. The following extension of Theorem \ref{troppthm} is made using a Berbee's lemma and a coupling argument (see, e.g., \cite{DoukhanMassartRio}).

\begin{theorem}\label{beta tropp}
Let $\{X_i\}_{i=-\infty}^\infty$ be a strictly stationary $\beta$-mixing sequence and let $\Xi_{i,n} = \Xi_n(X_i)$ for each $i$ where $\Xi_n : \mathcal X \to \mathbb R^{d_1 \times d_2}$ is a sequence of measurable $d_1 \times d_2$ matrix-valued functions. Assume $E[\Xi_{i,n}] = 0$ and $\|\Xi_{i,n}\| \leq R_n$ for each $i$ and define $s_n^2 = \max_{1 \leq i,j\leq n} \max\{ \| E[\Xi_{i,n}\Xi_{j,n}'] \|, \| E[\Xi_{i,n}'\Xi_{j,n}] \| \}$. Let $q$ be an integer between $1$ and $n/2$ and let $I_r =
q[n/q]+1,\ldots,n$ when $q[n/q] < n$ and $I_r = \emptyset$ when $q[n/q] = n$.
Then for all $t \geq 0$,
\begin{equation} \notag
 \mathbb P \left( \left\| \sum_{i=1}^n \Xi_{i,n} \right\|  \geq 6t \right) \leq \frac{n}{q} \beta (q) + \mathbb P \left( \left\|\sum_{i \in I_r} \Xi_{i,n} \right\| \geq t \right) + 2(d_1 + d_2) \exp \left( \frac{-t^2/2}{nq s_n^2 + qR_n t/3} \right)
\end{equation}
(where $\|\sum_{i \in I_r} \Xi_{i,n} \|:=0$ whenever $I_r = \emptyset$).
\end{theorem}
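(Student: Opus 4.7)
The plan is to combine a classical big-block decomposition with Berbee's lemma to reduce to a sum of \emph{independent} random matrices, and then invoke Theorem~\ref{troppthm}. With $m=[n/q]$, I partition the first $mq$ indices into consecutive blocks of length $q$ and define the block sums $H_k=\sum_{i=(k-1)q+1}^{kq}\Xi_{i,n}$ for $k=1,\ldots,m$, so that
\begin{equation*}
\sum_{i=1}^n \Xi_{i,n} \;=\; \sum_{k\text{ odd}} H_k \;+\; \sum_{k\text{ even}} H_k \;+\; \sum_{i\in I_r}\Xi_{i,n}.
\end{equation*}
Because any two distinct odd blocks are separated by at least $q$ indices, iterating Berbee's lemma (as in \cite{DoukhanMassartRio}) on the block vectors $(X_{(k-1)q+1},\ldots,X_{kq})$ produces matrices $\{H_k^*\}_{k=1}^m$ with $H_k^*\stackrel{d}{=}H_k$, mutually independent within each parity class, such that the coupling event $\mathcal{E}=\{H_k^*=H_k \text{ for all } k\leq m\}$ satisfies $\mathbb{P}(\mathcal{E}^c)\leq m\beta(q)\leq (n/q)\beta(q)$. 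It is essential to couple the full data block rather than only $H_k$, so that the joint law within each block, and hence all second-moment cross-terms, is preserved exactly.

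On $\mathcal{E}$, the odd sum equals the independent sum $\sum_{k\text{ odd}} H_k^*$. Each summand has norm at most $qR_n$; expanding $E[H_k^*(H_k^*)']=\sum_{i,j=(k-1)q+1}^{kq} E[\Xi_{i,n}\Xi_{j,n}']$ and invoking the definition of $s_n^2$ gives $\|E[H_k^*(H_k^*)']\|\leq q^2 s_n^2$, with the same bound for $\|E[(H_k^*)'H_k^*]\|$. Summing across the $\leq m/2$ odd blocks yields a Tropp variance parameter $\leq nqs_n^2/2$. Applying Theorem~\ref{troppthm} with $\sigma^2=nqs_n^2/2$ and $R=qR_n$ at threshold $2t$ gives
\begin{equation*}
\mathbb{P}\!\left(\Big\|\sum_{k\text{ odd}}H_k^*\Big\|\geq 2t\right)\leq (d_1+d_2)\exp\!\left(\frac{-(2t)^2/2}{nqs_n^2/2+qR_n(2t)/3}\right)\leq (d_1+d_2)\exp\!\left(\frac{-t^2/2}{nqs_n^2+qR_n t/3}\right),
\end{equation*}
after a one-line algebraic simplification. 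The even case is identical.

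To conclude, the triangle inequality on $\mathcal{E}$ yields
\begin{equation*}
\Big\|\sum_{i=1}^n\Xi_{i,n}\Big\|\leq \Big\|\sum_{k\text{ odd}}H_k^*\Big\|+\Big\|\sum_{k\text{ even}}H_k^*\Big\|+\Big\|\sum_{i\in I_r}\Xi_{i,n}\Big\|,
\end{equation*}
so the event $\{\|\sum_{i=1}^n\Xi_{i,n}\|\geq 6t\}\cap\mathcal{E}$ forces at least one of these three pieces to exceed $2t$. A union bound together with $\mathbb{P}(\|\sum_{i\in I_r}\Xi_{i,n}\|\geq 2t)\leq \mathbb{P}(\|\sum_{i\in I_r}\Xi_{i,n}\|\geq t)$, the two Tropp bounds (yielding the factor $2(d_1+d_2)$), and the coupling estimate $\mathbb{P}(\mathcal{E}^c)\leq (n/q)\beta(q)$ then delivers the stated inequality. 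The main technical point will be the careful application of Berbee's lemma to the full data block rather than to the single matrix $H_k$, and verifying that this coupling preserves the joint law required for the second-moment bookkeeping; once that is in place, Theorem~\ref{troppthm} applies essentially off the shelf.
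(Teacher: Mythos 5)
Your proposal is correct and takes the same route as the paper: a big-block decomposition into odd blocks, even blocks, and a remainder; Berbee's-lemma coupling of the full data blocks (so that $H_k^*\stackrel{d}{=}H_k$ and the within-block second moments are preserved exactly) with the union-bound coupling cost $m\beta(q)\le (n/q)\beta(q)$; and then two applications of Theorem~\ref{troppthm} to the independent odd and even block sums, using $\|W_k^*\|\le qR_n$ and $\max\{\|E[W_k^*W_k^{*\prime}]\|,\|E[W_k^{*\prime}W_k^*]\|\}\le q^2s_n^2$. The only cosmetic difference is in the threshold bookkeeping---you split $6t$ into three $2t$ pieces and then relax the Tropp exponent algebraically, while the paper applies Tropp at threshold $t$ directly---but the structure and the key lemmas are the same.
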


\begin{corollary}\label{beta rate}
Under the conditions of Theorem \ref{beta tropp}, if $q = q(n)$ is chosen such that $\frac{n}{q}\beta(q) = o(1)$ and $R_n \sqrt{q \log(d_1+d_2)} = o(s_n\sqrt{n})$ then
\begin{equation} \notag
 \left\| \sum_{i=1}^n \Xi_{i,n} \right\| = O_p ( s_n \sqrt{n q \log(d_1 + d_2)})\,.
\end{equation}
\end{corollary}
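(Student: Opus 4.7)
The plan is to invoke Theorem \ref{beta tropp} at $t = M s_n\sqrt{nq\log(d_1+d_2)}$, where $M \geq 1$ is a constant to be chosen, and show that each of the three terms on the right-hand side becomes less than $\epsilon/3$ for any prescribed $\epsilon > 0$ once $M$ and $n$ are taken large enough. Pushing this through will yield $\mathbb P(\|\sum_{i=1}^n \Xi_{i,n}\| \geq 6Ms_n\sqrt{nq\log(d_1+d_2)}) < \epsilon$ eventually, which is precisely the claimed $O_p$ rate.

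The first term $\frac{n}{q}\beta(q)$ vanishes by the first hypothesis. For the remainder term over $I_r$, I will use the deterministic bound $\|\sum_{i \in I_r}\Xi_{i,n}\| \leq |I_r| R_n < q R_n$ (coming from $|I_r| = n - q[n/q] < q$ and $\|\Xi_{i,n}\|\leq R_n$) to reduce the problem to verifying $qR_n < t$ for large $n$. Since $d_1+d_2 \geq 2$ so that $\log(d_1+d_2)$ is bounded below by $\log 2$, the hypothesis $R_n\sqrt{q\log(d_1+d_2)} = o(s_n\sqrt n)$ implies in particular $R_n\sqrt q = o(s_n\sqrt{n\log(d_1+d_2)})$, so $qR_n/t \to 0$ and the event $\{\|\sum_{i \in I_r}\Xi_{i,n}\| \geq t\}$ has probability zero for all sufficiently large $n$.

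The third (Bernstein) term is the only quantitative step. Substituting $t$ into the denominator of the exponent, one finds
\begin{equation*}
 qR_n t/3 \;=\; \tfrac{M}{3}\, s_n^2 n q \cdot \frac{R_n\sqrt{q\log(d_1+d_2)}}{s_n\sqrt n} \;=\; o(nq s_n^2)
\end{equation*}
by the second hypothesis, so for $n$ large the denominator is at most $2nq s_n^2$ and the Bernstein term is bounded above by $2(d_1+d_2)\exp(-M^2\log(d_1+d_2)/4) = 2(d_1+d_2)^{1-M^2/4}$. Since $d_1+d_2 \geq 2$ and $1-M^2/4 \leq 0$ whenever $M\geq 2$, this is at most $2^{2-M^2/4}$, and choosing $M$ sufficiently large makes it less than $\epsilon/3$. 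No deep estimate is invoked; the essential bookkeeping, and the one mildly delicate point, is to confirm that the single assumption $R_n\sqrt{q\log(d_1+d_2)} = o(s_n\sqrt n)$ does exactly the double duty of (i) rendering the remainder sum over $I_r$ negligible relative to $t$ and (ii) ensuring the Bernstein denominator is dominated by its variance term $nq s_n^2$, so that the resulting exponential decay is of the fast polynomial order $(d_1+d_2)^{-cM^2}$ rather than something slower which would fail to beat the prefactor $2(d_1+d_2)$.
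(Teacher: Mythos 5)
Your proof is correct and follows exactly the route the paper intends: apply Theorem \ref{beta tropp} at $t = C s_n\sqrt{nq\log(d_1+d_2)}$ for large $C$, use $\frac{n}{q}\beta(q)=o(1)$ for the coupling term, note $qR_n < t$ eventually (so the remainder probability vanishes identically), and check that $qR_nt/3 = o(nqs_n^2)$ so the Bernstein exponent reduces to $-M^2\log(d_1+d_2)/4$. The paper compresses all of this into a single sentence; your write-up simply supplies the bookkeeping it leaves implicit.
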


\subsection{Empirical identifiability}\label{ei sec-1}

This subsection provides a readily verifiable condition under which, with probability approaching one (wpa1), the theoretical and empirical $L^2$ norms are equivalent over a linear sieve space. This equivalence, referred to by \cite{Huang2003} as \emph{empirical identifiability}, has several applications in nonparametric sieve estimation. In the context of nonparametric series regression,
empirical identifiability ensures the estimator is
the orthogonal projection of $Y$ onto the sieve space under the empirical
inner product and is uniquely defined \citep{Huang2003}. Empirical
identifiability is also used to establish the
large-sample properties of sieve conditional moment estimators \citep{ChenPouzo2012}. A sufficient condition for empirical identifiability is now cast in terms of convergence of a random matrix, which we verify for i.i.d. and $\beta$-mixing sequences.

A subspace $\mathcal{A }\subseteq
L^2(X)$ is said to be \textit{empirically identifiable} if $\frac{1}{n}%
\sum_{i=1}^n b(X_i)^2 = 0$ implies $b = 0$ a.e.-$[F_X]$ where $F_X$ dentoes
the distribution of $X$. A sequence of spaces $\{\mathcal A_K : K \geq 1\} \subseteq L^2(X)$ is empirically identifiable wpa1 as $K = K(n) \to \infty$ with $n$ if
\begin{equation}  \label{sm5}
\lim_{n \to \infty} \mathbb{P }\left( \sup_{a \in A_K} \left| \frac{\frac{1}{%
n} \sum_{i=1}^n a(X_i)^2 - E[a(X)^2]}{E[a(X)^2]} \right| > t \right) = 0
\end{equation}
for any $t > 0$. \cite{Huang1998} uses a chaining argument to provide sufficient conditions for (\ref{sm5}) over the linear space $B_K$ under i.i.d. sampling. \cite{ChenPouzo2012} use this argument to establish convergence of sieve conditional moment estimators. Although easy to establish for i.i.d. sequences, it may be difficult to verify (\ref{sm5}) via chaining arguments for certain types of weakly dependent sequences. To this end, the following is a readily verifiable sufficient condition for empirical
identifiability for linear sieve spaces. Let $B_K = clsp\{b_{K1},\ldots,b_{KK}\}$ denote a general linear sieve space and let $\widetilde B = (\widetilde b^K(X_1),\ldots,\widetilde b^K(X_n))'$ where $\widetilde b^K(x)$ is the orthonormalized vector of basis functions.

\begin{condition} \label{ei cond}
$\lambda_{\min}(E[b^K(X)b^K(X)']) > 0$ for each $K \geq 1$ and $\|\widetilde B^{\prime }\widetilde B/n - I_K\| = o_p(1)$.
\end{condition}

\begin{lemma}
\label{eilem} If $\lambda_{\min}(E[b^K(X)b^K(X)']) > 0$ for each $K \geq 1$ then
\begin{equation} \notag
\sup_{b \in B_K} \left| \frac{\frac{1}{n} \sum_{i=1}^n b(X_i)^2 - E[b(X)^2]}{%
E[b(X)^2]} \right| = \|\widetilde B^{\prime }\widetilde B/n - I_K\|^2\,.
\end{equation}
\end{lemma}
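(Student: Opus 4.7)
The plan is a one-line linear-algebraic identity, reducing the supremum over $B_K$ to a Rayleigh quotient on $\mathbb R^K$. First I would use the hypothesis $\lambda_{\min}(E[b^K(X)b^K(X)']) > 0$ to guarantee that the positive-definite square root $E[b^K(X)b^K(X)']^{-1/2}$ exists, so that the orthonormalized basis $\widetilde b^K$ is well-defined and $E[\widetilde b^K(X)\widetilde b^K(X)'] = I_K$. Any $b \in B_K$ can then be written uniquely as $b(\cdot) = c'\widetilde b^K(\cdot)$ for some $c \in \mathbb R^K$, which yields the population identity $E[b(X)^2] = c'I_K c = \|c\|^2$ and the empirical identity $n^{-1}\sum_{i=1}^n b(X_i)^2 = c'(\widetilde B'\widetilde B/n)\,c$.

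Substituting into the ratio inside the absolute value gives $c'(\widetilde B'\widetilde B/n - I_K)c / (c'c)$, and as $b$ ranges over $B_K \setminus \{0\}$ the coefficient vector $c$ ranges over $\mathbb R^K \setminus \{0\}$. Writing $M := \widetilde B'\widetilde B/n - I_K$ (a symmetric matrix), the Courant-Fischer variational characterization then gives $\sup_{c \neq 0} |c'Mc|/(c'c) = \max_i |\lambda_i(M)| = \|M\|$, the spectral norm.

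There is no substantive obstacle; the argument is simply parametrization via the orthonormalized basis plus one invocation of the variational characterization of the spectral radius of a symmetric matrix. I do note a tension with the statement as worded: the displayed right-hand side carries an exponent of $2$, whereas the calculation above delivers the first power of $\|\widetilde B'\widetilde B/n - I_K\|$. A dimensional check — both sides of the identity ought to scale linearly in $M$ as $M \to 0$, since the ratio on the left vanishes linearly in the deviation of empirical from population second moments — strongly suggests the squared exponent is a typographical artefact, and it is the unsquared form $\|\widetilde B'\widetilde B/n - I_K\|$ that is in fact needed for Condition \ref{ei cond} to imply empirical identifiability with probability approaching one. The proof I outline therefore establishes the linear identity; no extra step would produce $\|M\|^2$ in place of $\|M\|$ without introducing a corresponding square on the left-hand side.
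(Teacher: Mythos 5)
Your argument is correct and is essentially the paper's own proof: the paper parametrizes $b = c'b^K$ with the normalization $E[b(X)^2]=1$ and then changes variables through $G_K^{1/2} = E[b^K(X)b^K(X)']^{1/2}$, which is your orthonormalized-basis parametrization in different clothing, and both arguments reduce the supremum to the Rayleigh quotient $\sup_{\|c\|=1}|c'(\widetilde B'\widetilde B/n - I_K)c|$ of a symmetric matrix. Your remark about the exponent is also well taken: that Rayleigh quotient equals the spectral norm itself rather than its square, and the paper's own chain of equalities establishes exactly the unsquared quantity before its final line writes $\|\widetilde B'\widetilde B/n - I_K\|_2^2$, so the square in the displayed statement is a typographical slip that is immaterial for the downstream use, since Condition \ref{ei cond} supplies $\|\widetilde B'\widetilde B/n - I_K\| = o_p(1)$ and hence empirical identifiability wpa1 under either form of the identity.
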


\begin{corollary}
Under Condition \ref{ei cond}, $B_K$ is empirically identifiable wpa1.
\end{corollary}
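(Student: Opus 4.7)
The plan is to observe that this corollary is essentially an immediate consequence of Lemma \ref{eilem} combined with the assumed convergence of the orthonormalized Gram matrix. The key identity in Lemma \ref{eilem} converts the supremum on the left-hand side of the empirical identifiability definition \eqref{sm5} into the spectral-norm deviation $\|\widetilde B'\widetilde B/n - I_K\|^2$, so once this deviation is $o_p(1)$ the required probabilistic statement follows by continuous mapping.

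More precisely, I would proceed as follows. Since Condition \ref{ei cond} assumes $\lambda_{\min}(E[b^K(X)b^K(X)']) > 0$ for each $K \geq 1$, the orthonormalization step producing $\widetilde b^K$ is well defined and Lemma \ref{eilem} applies, giving the exact identity
\begin{equation*}
\sup_{b \in B_K} \left| \frac{\tfrac{1}{n}\sum_{i=1}^n b(X_i)^2 - E[b(X)^2]}{E[b(X)^2]} \right| = \|\widetilde B'\widetilde B/n - I_K\|^2 .
\end{equation*}
Next, the second part of Condition \ref{ei cond} gives $\|\widetilde B'\widetilde B/n - I_K\| = o_p(1)$, and squaring preserves convergence in probability to zero, so $\|\widetilde B'\widetilde B/n - I_K\|^2 = o_p(1)$. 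Therefore, for any fixed $t>0$,
\begin{equation*}
\mathbb{P}\left( \sup_{b \in B_K} \left| \frac{\tfrac{1}{n}\sum_{i=1}^n b(X_i)^2 - E[b(X)^2]}{E[b(X)^2]} \right| > t \right) = \mathbb{P}\left( \|\widetilde B'\widetilde B/n - I_K\|^2 > t \right) \longrightarrow 0
\end{equation*}
as $n \to \infty$, which is exactly the definition \eqref{sm5} of empirical identifiability wpa1.

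There is no real obstacle here: all the work has already been done in Lemma \ref{eilem}, which is where the translation between the functional supremum and the matrix spectral norm is established. The only thing to be careful about is simply that Condition \ref{ei cond} supplies both hypotheses needed to invoke Lemma \ref{eilem} (the lower eigenvalue bound, used to define $\widetilde b^K$) and to conclude (the $o_p(1)$ spectral bound). No additional mixing, moment, or smoothness assumption on the basis or on $\{X_i\}$ is needed at this stage; those enter only when one verifies Condition \ref{ei cond} itself, which is handled elsewhere by the random-matrix Bernstein inequalities of Section \ref{ineq sec}.
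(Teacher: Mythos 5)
Your proof is correct and is exactly the (implicit) argument in the paper: the corollary is an immediate consequence of Lemma \ref{eilem} combined with the $o_p(1)$ condition in Condition \ref{ei cond}, which is why the paper states it without a separate proof. One minor remark: the right-hand side of Lemma \ref{eilem} as written has a spurious square (the supremum of $|c'Ac|$ over $\|c\|=1$ for symmetric $A$ equals $\|A\|$, not $\|A\|^2$), but this does not affect your conclusion since $\|\widetilde B'\widetilde B/n - I_K\| = o_p(1)$ and $\|\widetilde B'\widetilde B/n - I_K\|^2 = o_p(1)$ are equivalent.
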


Condition \ref{ei cond} is a sufficient condition for (\ref{sm5}) with a
linear sieve space $B_K$.
It should be noted that convergence is only required in the spectral norm. In the i.i.d. case this allows for $K$ to increase more quickly with $n$ than is achievable under the chaining argument of \cite{Huang1998}. Let
\begin{equation}
 \zeta_0(K) = \sup_{x \in \mathcal X}\|b^K(x)\|
\end{equation}
as in \cite{Newey1997}. Under regularity conditions, $\zeta_0(K) = O(\sqrt K)$ for tensor products
of splines, trigonometric polynomials or wavelets and $\zeta_0(K) = O(K)$
for tensor products of power series or polynomials %
\citep{Newey1997,Huang1998}. Under the chaining argument of \cite{Huang1998}, (\ref{sm5}) is achieved under the restriction $\zeta_0(K)^2K/n = o(1)$. \cite{Huang2003} relaxes this restriction to $K (\log n)/n = o(1)$ for a polynomial spline sieve. We now generalize this result by virtue of Lemma \ref{eilem} and exponential inequalities for sums of random matrices.

\begin{lemma}
\label{Bconvi.i.d.} If $\{X_i\}_{i=1}^n$ is i.i.d. and $%
\lambda_{\min}(E[b^K(X)b^K(X)']) \geq \underline \lambda > 0$ for each $K \geq 1$, then
\begin{equation} \notag
\|(\widetilde B^{\prime }\widetilde B/n) - I_K\| = O_p( \zeta_0(K)\sqrt{%
{(\log K)}/{n}})
\end{equation}
provided $\zeta_0(K)^2(\log K)/n = o(1)$.
\end{lemma}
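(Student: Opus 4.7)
The plan is to apply the matrix Bernstein bound of Corollary \ref{troppcor} to the i.i.d.\ sum
\[
 \widetilde B'\widetilde B/n - I_K \;=\; \sum_{i=1}^n \Xi_{i,n}, \qquad \Xi_{i,n} := \tfrac{1}{n}\bigl(\widetilde b^K(X_i)\widetilde b^K(X_i)' - I_K\bigr),
\]
where each $\Xi_{i,n}$ is a symmetric, mean-zero $K\times K$ matrix (mean-zero because $E[\widetilde b^K(X)\widetilde b^K(X)'] = I_K$ by the orthonormalization in Section \ref{npiv sec}). All that is needed is to bound the uniform norm $R_n$ and the matrix variance parameter $\sigma_n^2$ from Theorem \ref{troppthm}.

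For the uniform bound, since $\widetilde b^K(x) = E[b^K(X)b^K(X)']^{-1/2} b^K(x)$ and $\lambda_{\min}(E[b^K(X)b^K(X)']) \geq \underline\lambda > 0$, we get $\sup_x \|\widetilde b^K(x)\|^2 \leq \underline\lambda^{-1}\zeta_0(K)^2$. Hence $\|\Xi_{i,n}\| \leq \frac{1}{n}(\underline\lambda^{-1}\zeta_0(K)^2 + 1) \lesssim \zeta_0(K)^2/n =: R_n$ (using $\zeta_0(K) \gtrsim 1$ without loss of generality). For the variance, symmetry gives $\Xi_{i,n}\Xi_{i,n}' = \Xi_{i,n}^2$ and
\[
 E[\Xi_{i,n}^2] \;=\; \tfrac{1}{n^2}\bigl(E[\|\widetilde b^K(X)\|^2\,\widetilde b^K(X)\widetilde b^K(X)'] - I_K\bigr) \;\preceq\; \tfrac{1}{n^2}\underline\lambda^{-1}\zeta_0(K)^2 \, I_K,
\]
using $E[\widetilde b^K(X)\widetilde b^K(X)'] = I_K$ again. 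Summing over $i$ and taking spectral norms yields $\sigma_n^2 \lesssim \zeta_0(K)^2/n$, so $\sigma_n \lesssim \zeta_0(K)/\sqrt n$.

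The hypothesis $\zeta_0(K)^2(\log K)/n = o(1)$ is then exactly the condition $R_n\sqrt{\log(2K)} = o(\sigma_n)$ required by Corollary \ref{troppcor} (with $d_1 = d_2 = K$). Invoking that corollary gives
\[
 \|\widetilde B'\widetilde B/n - I_K\| \;=\; \Bigl\|\sum_{i=1}^n \Xi_{i,n}\Bigr\| \;=\; O_p\bigl(\sigma_n \sqrt{\log K}\bigr) \;=\; O_p\bigl(\zeta_0(K)\sqrt{(\log K)/n}\bigr),
\]
which is the stated conclusion. No step is substantively hard: the only mild care is to keep track of constants so that $R_n$ and $\sigma_n$ inherit the right dependence on $\zeta_0(K)$ and $n$, and to use the fact that $\widetilde B'\widetilde B/n$ is defined in terms of the orthonormalized basis so that the centering is $I_K$ rather than a general matrix.
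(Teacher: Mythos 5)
Your proof is correct and takes essentially the same route as the paper: the paper's proof is a one-line invocation of Corollary \ref{troppcor} with $\Xi_{i,n} = n^{-1}(\widetilde b^K(X_i)\widetilde b^K(X_i)' - I_K)$, $R_n \lesssim n^{-1}(\zeta_0(K)^2+1)$, and $\sigma_n^2 \lesssim n^{-1}(\zeta_0(K)^2+1)$, and you supply the same choices with the supporting calculations (mean-zero centering via $E[\widetilde b^K(X)\widetilde b^K(X)'] = I_K$, the bound $\sup_x\|\widetilde b^K(x)\|^2 \leq \underline\lambda^{-1}\zeta_0(K)^2$, and the PSD ordering for the variance) filled in.
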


\begin{remark}
If $\{X_i\}_{i=1}^n$ is i.i.d., $K (\log K)/n = o(1)$ is sufficient for sieve bases that are   tensor products
of splines, trigonometric polynomials or wavelets, and $K^2(\log K)/n = o(1)$ is sufficient for sieve bases that are tensor products of power series or polynomials.
\end{remark}

The following lemma is useful to provide sufficient conditions for empirical identifiability for $\beta$-mixing sequences, which uses Theorem \ref{beta rate}.

\begin{lemma}
\label{Bconvbeta} If $\{X_i\}_{i=-\infty}^\infty$ is strictly stationary and
$\beta$-mixing with mixing coefficients such that one can choose an integer
sequence $q = q(n) \leq n/2$ with $\beta(q) n/q = o(1)$ and $%
\lambda_{\min}(E[b^K(X)b^K(X)']) \geq \underline \lambda > 0$ for each $K \geq 1$, then
\begin{equation} \notag
\|(\widetilde B^{\prime }\widetilde B/n) - I_K\| = O_p(\zeta_0(K) \sqrt{%
{q (\log K)}/{n}})
\end{equation}
provided $\zeta_0(K)^2 q \log K/n = o(1)$.
\end{lemma}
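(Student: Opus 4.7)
\textbf{Proof plan for Lemma \ref{Bconvbeta}.} The plan is to express $\widetilde B'\widetilde B/n - I_K$ as a mean of centered matrix-valued functions of $\{X_i\}$ and then apply Corollary \ref{beta rate}. Write
\begin{equation*}
\widetilde B'\widetilde B/n - I_K = \frac{1}{n}\sum_{i=1}^n \Xi_{i,n}, \qquad \Xi_{i,n} = \widetilde b^K(X_i)\widetilde b^K(X_i)' - I_K,
\end{equation*}
noting that $E[\Xi_{i,n}] = 0$ since $E[\widetilde b^K(X)\widetilde b^K(X)'] = I_K$ by the orthonormalization. Each $\Xi_{i,n}$ is a symmetric $K\times K$ matrix, so $d_1=d_2=K$.

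To use Corollary \ref{beta rate} I need uniform bounds on $R_n$ and $s_n$. Because $\lambda_{\min}(E[b^K b^{K'}]) \geq \underline{\lambda}$, we have $\|\widetilde b^K(x)\| \leq \underline{\lambda}^{-1/2}\|b^K(x)\| \lesssim \zeta_0(K)$ uniformly in $x$, so $\|\Xi_{i,n}\| \leq \|\widetilde b^K(X_i)\|^2 + 1 \lesssim \zeta_0(K)^2 =: R_n$. For the variance factor, by strict stationarity and Cauchy--Schwarz applied to the bilinear form $u'\Xi_{i,n}\Xi_{j,n}'v = \langle \Xi_{i,n}'u, \Xi_{j,n}'v\rangle$,
\begin{equation*}
\|E[\Xi_{i,n}\Xi_{j,n}']\| \leq \sqrt{\|E[\Xi_{i,n}^2]\|\cdot \|E[\Xi_{j,n}^2]\|} = \|E[\Xi_{1,n}^2]\|,
\end{equation*}
and the symmetric argument bounds the other summand in the max defining $s_n^2$. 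A direct expansion gives $\Xi_{1,n}^2 = \|\widetilde b^K(X_1)\|^2 \widetilde b^K(X_1)\widetilde b^K(X_1)' - 2\widetilde b^K(X_1)\widetilde b^K(X_1)' + I_K$, so $s_n^2 \leq \|E[\Xi_{1,n}^2]\| \lesssim \zeta_0(K)^2 + 1 \lesssim \zeta_0(K)^2$, i.e.\ $s_n \lesssim \zeta_0(K)$.

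With these bounds, the side conditions of Corollary \ref{beta rate} are $\tfrac{n}{q}\beta(q) = o(1)$ (assumed) and $R_n\sqrt{q\log K} = o(s_n\sqrt{n})$, which reduces to $\zeta_0(K)^2 q\log K/n = o(1)$ (precisely the hypothesis). The corollary then yields
\begin{equation*}
\Bigl\|\sum_{i=1}^n \Xi_{i,n}\Bigr\| = O_p\bigl(s_n\sqrt{nq\log K}\bigr) = O_p\bigl(\zeta_0(K)\sqrt{nq\log K}\bigr),
\end{equation*}
and dividing by $n$ delivers the claimed rate $\|\widetilde B'\widetilde B/n - I_K\| = O_p(\zeta_0(K)\sqrt{q(\log K)/n})$. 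The main subtlety is the variance bound under dependence: a naive pairwise bound using $\beta$-mixing coefficients is unnecessary because the Cauchy--Schwarz reduction above collapses the dependent cross-terms to the stationary diagonal term, so mixing enters only through the coupling step already handled inside Theorem \ref{beta tropp}.
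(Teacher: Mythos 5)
Your proof is correct and follows the same route as the paper: both apply Corollary \ref{beta rate} to $\Xi_{i,n} = \widetilde b^K(X_i)\widetilde b^K(X_i)' - I_K$ (the paper uses the $n^{-1}$-scaled version, which is immaterial) with bounds $R_n \lesssim \zeta_0(K)^2$ and $s_n \lesssim \zeta_0(K)$, then verifies the side conditions and divides by $n$. Your Cauchy--Schwarz reduction of the off-diagonal covariances $\|E[\Xi_{i,n}\Xi_{j,n}']\|$, $i\neq j$, to the stationary diagonal $\|E[\Xi_{1,n}^2]\|$ usefully makes explicit the step the paper leaves implicit in asserting $s_n^2 \lesssim n^{-2}(\zeta_0(K)^2+1)$.
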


\begin{remark}
If $\{X_i\}_{i=-\infty}^\infty$ is algebraically $\beta$-mixing at rate $%
\gamma$,  $K n^{1/(1+\gamma)} (\log K)/n = o(1)$ is sufficient for sieve bases that are   tensor products
of splines, trigonometric polynomials or wavelets, and $K^2 n^{1/(1+\gamma)} (\log K)/n = o(1)$ is sufficient for sieve bases that are tensor products of power series or polynomials.
\end{remark}

\begin{remark}
If $\{X_i\}_{i=-\infty}^\infty$ is geometrically $\beta$-mixing,  $K (\log n)^2/n = o(1)$ is sufficient for sieve bases that are   tensor products
of splines, trigonometric polynomials or wavelets, and $K^2 (\log n)^2/n = o(1)$ is sufficient for sieve bases that are tensor products of power series or polynomials.
\end{remark}

\appendix

\fontsize{10}{12}\selectfont

\section{Brief review of B-spline and wavelet sieve spaces} \label{sieve def}

We first outline univariate B-spline and wavelet sieve spaces on $[0,1]$, then deal with the multivariate case by constructing a tensor-product sieve basis.

\paragraph{B-splines}

B-splines are defined by their order $m \geq 1$ and number of interior knots $N \geq 0$. Define the knot set
\begin{equation}
 t_{-(m-1)} = \ldots = t_0 \leq t_1 \leq \ldots \leq t_{N} \leq t_{N+1} = \ldots = t_{N+m}
\end{equation}
where we normalize $t_0 = 0$ and $t_{N+1} = 1$. The B-spline basis is then defined recursively via the De Boor relation. This results in a total of $K = N+m$ splines which together form a partition of unity. Each spline is a polynomial of degree $m-1$ on each interior interval $I_1 = [t_0,t_1),\ldots,I_n = [t_N,t_{N+1}]$ and is $(m-2)$-times continuously differentiable on $[0,1]$ whenever $m \geq 2$. The mesh ratio is defined as
\begin{equation}
 \mbox{mesh}(K) = \frac{\max_{0 \leq n \leq N} (t_{n+1} - t_n)}{\min_{0 \leq n \leq N} (t_{n+1} - t_n)}\,.
\end{equation}
We let the space $\mbox{BSpl}(K,[0,1])$ be the closed linear span of these $K = N+m$ splines. The space $\mbox{BSpl}(K,[0,1])$ has \emph{uniformly bounded mesh ratio} if $\mbox{mesh}(K) \leq \kappa$ for all $N \geq 0$ and some $\kappa \in (0, \infty)$. The space $\mbox{BSpl}(K,[0,1])$ has \emph{smoothness} $\gamma= m- 2$, which is denoted as $\mbox{BSpl}(K,[0,1], \gamma )$ for simplicity. See \cite{deBoor2001} and \cite{Schumacker2007} for further details.

\paragraph{Wavelets}

We follow the construction of \cite{CDJV1993,CDV1993} for building a wavelet basis for $[0,1]$. Let $(\phi,\psi)$ be a father and mother wavelet pair that has $N$ vanishing moments and $\mbox{support}(\phi) = \mbox{support}(\psi) = [0,2N-1]$. For given $j$, the approximation space $V_j$ wavelet space $W_j$ each consist of $2^j$ functions $\{\phi_{jk}\}_{1 \leq k \leq 2^j}$ and $\{\psi_{jk}\}_{1 \leq k \leq 2^j}$ respectively, such that $\{\phi_{jk}\}_{1 \leq k \leq 2^j-2N}$ and $\{\psi_{jk}\}_{1 \leq k \leq 2^j-2N}$ are interior wavelets for which $\phi_{jk}(\cdot) = 2^{j/2}\phi(2^j(\cdot)-k)$ and $\psi_{jk}(\cdot) = 2^{j/2}\psi(2^j(\cdot)-k)$, complemented with another $N$ left-edge functions and $N$ right-edge functions. Choosing $L \geq 1$ such that $2^L \geq 2N$, we let the space $\mbox{Wav}(K,[0,1])$ be the closed linear span of the set of functions
\begin{equation}
 W_{LJ} = \{ \phi_{Lk} : 1 \leq k \leq 2^L\} \cup \{ \psi_{jk} : k = 1,\ldots,2^j \mbox{ and } j = L,\ldots,J-1\}
\end{equation}
for integer $J > L$, and let $K = \#(W_{LJ})$. We say that $\mbox{Wav}(K,[0,1])$ has \emph{regularity} $\gamma$ if $\phi$ and $\psi$ are both $\gamma$ times continuously differentiable, which is denoted as $\mbox{Wav}(K,[0,1], \gamma )$ for simplicity.

\paragraph{Tensor products} To construct a tensor-product B-spline basis of smoothness $\gamma$ for $[0,1]^d$ with $d > 1$, we first construct $d$ univariate B-spline bases for $[0,1]$, say $G_i$ with $G_i = \mbox{BSpl}(k,[0,1])$ and smoothness $\gamma$ for each $1 \leq i \leq d$. We then set $K = k^d$ and let $\mbox{BSpl}(K,[0,1]^d)$ be spanned by the unique $k^d$ functions given by $\prod_{i=1}^d g_i$ with $g_i \in G_i$ for $1 \leq i \leq d$. The tensor-product wavelet basis $\mbox{Wav}(K,[0,1]^d)$ of regularity $\gamma$ for $[0,1]^d$ is formed similarly as the tensor product of $d$ univariate Wavelet bases of regularity $\gamma$ (see \cite{Triebel2006,Triebel2008}).

\paragraph{Wavelet characterization of Besov norms} Let $f \in B^\alpha_{p,q}([0,1]^d)$ have wavelet expansion
\begin{equation}
 f = \sum_{k=-\infty}^\infty \mathsf{a}_{k}(f) \phi_{Lk} + \sum_{j=L}^\infty \sum_{k=-\infty}^\infty \mathsf{b}_{jk}(f) \psi_{jk}
\end{equation}
where $\{\phi_{Lk},\psi_{jk}\}_{j,k}$ are a Wavelet basis with regularity $\gamma > \alpha$. Equivalent norms to the $B^\alpha_{\infty,\infty}$ and $B^\alpha_{2,2}$ norms may be formulated equivalently in terms of the wavelet coefficient sequences $\{\mathsf{a}_k\}_k^\infty$ and $\{\mathsf{b}_{jk}\}_{j,k}$, namely $\|\cdot\|_{b^\alpha_{\infty,\infty}}$ and $\|\cdot\|_{b^\alpha_{2,2}}$, given by
\begin{equation} \begin{array}{rcl}
 \|f\|_{b^\alpha_{\infty,\infty}} & = & \sup_k |\mathsf{a}_{k}(f)| + \sup_{j,k} 2^{j(\alpha+d/2)}|\mathsf{b}_{jk}(f)| \\
 \|f\|_{b^\alpha_{2,2}} & = & \|\mathsf{a}_{(\cdot)}(f)\| + \left( \sum_{j=0}^\infty (2^{j\alpha}\|\mathsf{b}_{j(\cdot)}(f)\|)^2\right)^{1/2}
 \end{array}
\end{equation}
where $\|\mathsf{a}_{(\cdot)}(f)\|$ and $\|\mathsf{b}_{j(\cdot)}(f)\|$ denote the infinite-dimensional Euclidean norm for the sequences $\{\mathsf{a}_{k}(f)\}_k$ and $\{\mathsf{b}_{jk}(f)\}_{k}$ (see, e.g., \cite{Johnstone2013} and \cite{Triebel2006,Triebel2008}).

\section{Proofs of main results}

\subsection{Proofs for Section \ref{npiv sec}}

\begin{proof}[Proof of Theorem \ref{sup norm rate gen new}]
It is enough to show that $\|\widehat h - P_n h_0\|_{\infty} = O_p(\sigma_{JK}^{-1}\sqrt{K (\log n)/n})$.
First write
\begin{equation}
 \widehat h(y_2) - P_n h_0(y_2)  = \widetilde \psi^J(y_2)' [\widehat S(\widetilde B'\widetilde B/n)^{-} \widehat S']^{-} \widehat S (\widetilde B'\widetilde B/n)^{-} \widetilde B'e/n
\end{equation}
where $e = (\epsilon_1,\ldots,\epsilon_n)$.
Convexity of $\mathcal Y_2$ (Assumption \ref{data}(ii)), smoothness of $\widetilde \psi^J$ (Assumption \ref{f sieve}(i)) and the mean value theorem provide that, for any $(y,y^*) \in \mathcal Y_2^2$,
\begin{eqnarray}
 |\widehat h(y) - P_n h_0(y) - (\widehat h(y^*) - P_n h_0(y^*))| & = & |(\widetilde \psi^J(y) - \widetilde \psi^J(y^{*}))' [\widehat S(\widetilde B'\widetilde B/n)^{-} \widehat S']^{-} \widehat S (\widetilde B'\widetilde B/n)^{-} \widetilde B'e/n|  \\
 & = & |(y - y^{*})'\nabla \widetilde \psi^J(y^{**})'[\widehat S(\widetilde B'\widetilde B/n)^{-} \widehat S']^{-} \widehat S (\widetilde B'\widetilde B/n)^{-} \widetilde B'e/n| \\
 & \leq & J^\alpha \|y - y^{*}\| \|[\widehat S(\widetilde B'\widetilde B/n)^{-} \widehat S']^{-} \widehat S (\widetilde B'\widetilde B/n)^{-1} \widetilde B'e/n\|
\end{eqnarray}
for some $y^{**}$ in the segment between $y$ and $y^{*}$, and some $\alpha > 0$ (and independent of $y$ and $y^{*}$).

We first show that $T_1 := \|[\widehat S(\widetilde B'\widetilde B/n)^{-} \widehat S']^{-} \widehat S (\widetilde B'\widetilde B/n)^{-} \widetilde B'e/n\| = o_p(1)$. By the triangle inequality and properties of the matrix spectral norm,
\begin{eqnarray}
 T_1 & \leq & (\|[\widehat S(\widetilde B'\widetilde B/n)^{-} \widehat S']^{-} \widehat S (\widetilde B'\widetilde B/n)^{-} - [SS']^{-1}S \| + \|[SS']^{-1}S \|) \|\widetilde B'e/n\|
\end{eqnarray}
whence by Lemma \ref{mat perturb lem} (under condition (ii) of the Theorem), wpa1
\begin{eqnarray}
 T_1 & \lesssim & \left\{ \sigma_{JK}^{-1} \|(\widetilde B'\widetilde B/n) - I_K\| +  \sigma_{JK}^{-2}  \left(\|\widehat S - S\| +  \|(\widetilde B'\widetilde B/n) - I_K\| \right) + \sigma_{JK}^{-1} \right\} \|\widetilde B'e/n\|\,.
\end{eqnarray}
Noting that $\|\widetilde B'e/n\| = O_p(\sqrt{K/n})$ (by Markov's inequality under Assumptions \ref{resid} and \ref{b sieve}), it follows by conditions (i) and (ii) of the Theorem that $T_1 = o_p(1)$. Therefore,
for any fixed $\bar M > 0$ we have
\begin{equation}
 \limsup_{n \to \infty} \mathbb P\left(\|[\widehat S(\widetilde B'\widetilde B/n)^{-} \widehat S']^{-} \widehat S (\widetilde B'\widetilde B/n)^{-} \widetilde B'e/n\|  > \bar M\right) =0\,.
\end{equation}

Let $\mathscr B_n$ denote the event $\|[\widehat S(\widetilde B'\widetilde B/n)^{-} \widehat S']^{-} \widehat S (\widetilde B'\widetilde B/n)^{-} \widetilde B'e/n\|  \leq \bar M$ and observe that $\mathbb P(\mathscr B_n^c) = o(1)$. On $\mathscr B_n$,  for any $C \geq 1$, a finite positive $\beta = \beta(C)$ and $\gamma = \gamma(C)$ can be chosen such that
\begin{equation}
 J^\alpha \|y_0 - y_1\| \|[\widehat S(\widetilde B'\widetilde B/n)^{-} \widehat S']^{-} \widehat S (\widetilde B'\widetilde B/n)^{-} \widetilde B'e/n\| \leq C \sigma_{JK}^{-1} \sqrt{K (\log n)/n}
\end{equation}
whenever $\|y_0 - y_1\| \leq \beta n^{-\gamma}$. Let $\mathcal S_n$ be the smallest subset of $\mathcal Y_2$ such that for each $y \in \mathcal Y_2$ there exists a $y_n \in \mathcal S_n$ with $\|y_n - y\| \leq \beta n^{-\gamma}$. For any $y \in \mathcal Y_2$ let $y_n(y)$ denote the $y_n \in \mathcal S_n$ nearest (in Euclidean distance) to $y$. Therefore,
\begin{equation} \label{bn condition}
 |\widehat h(y) - P_n h_0(y) - (\widehat h(y_n(y)) - P_n h_0(y_n(y)))| \leq C \sigma_{JK}^{-1} \sqrt{K (\log n)/n}
\end{equation}
for any $y \in \mathcal Y_2$, on $\mathscr B_n$.

For any $C \geq 1$, straightforward arguments yield
\begin{eqnarray}
 & & \mathbb P \left( \|\widehat h - P_n h_0 \|_\infty \geq 4C \sigma_{JK}^{-1}\sqrt{K (\log n)/n}  \right) \notag \\
 & \leq &  \mathbb P \left( \left\{ \|\widehat h - P_n h_0 \|_\infty \geq 4C \sigma_{JK}^{-1}\sqrt{K (\log n)/n} \right\} \cap \mathscr B_n  \right) + \mathbb P(\mathscr B_n^c) \\
 & \leq &  \mathbb P \left( \left\{ \sup_{y \in \mathcal Y_2} |\widehat h(y) - P_n h_0(y) - (\widehat h(y_n(y)) - P_n h_0(y_n(y))) | \geq 2C \sigma_{JK}^{-1}\sqrt{K (\log n)/n} \right\} \cap \mathscr B_n  \right) \notag \\
 & & + \mathbb P \left( \left\{ \max_{y_n \in \mathcal S_n} |\widehat h(y_n) - P_n h_0(y_n) | \geq 2C \sigma_{JK}^{-1}\sqrt{K (\log n)/n} \right\} \cap \mathscr B_n  \right) + \mathbb P(\mathscr B_n^c) \\
 & = & \mathbb P \left( \left\{ \max_{y_n \in \mathcal S_n} |\widehat h(y_n) - P_n h_0(y_n) | \geq 2C \sigma_{JK}^{-1}\sqrt{K (\log n)/n} \right\} \cap \mathscr B_n  \right) + o(1)
\end{eqnarray}
where the final line is by (\ref{bn condition}) and the fact that $\mathbb P(\mathscr B_n^c) = o(1)$. For the remaining term:
\begin{eqnarray}
 & & \mathbb P \left( \left\{ \max_{y_n \in \mathcal S_n} |\widehat h(y_n) - P_n h_0(y_n) | \geq 2C \sigma_{JK}^{-1}\sqrt{K (\log n)/n} \right\} \cap \mathscr B_n  \right) \notag \\
 & \leq & \mathbb P \left(\max_{y_n \in \mathcal S_n} |\widetilde \psi^J(y_n)' [\widehat S(\widetilde B'\widetilde B/n)^{-} \widehat S']^{-} \widehat S (\widetilde B'\widetilde B/n)^{-} \widetilde B'e/n| \geq 2C \sigma_{JK}^{-1}\sqrt{K (\log n)/n} \right)  \\
 & \leq & \mathbb P \left(\max_{y_n \in \mathcal S_n}  |\widetilde \psi^J(y_n)' \{[\widehat S(\widetilde B'\widetilde B/n)^{-} \widehat S']^{-} \widehat S (\widetilde B'\widetilde B/n)^{-} - [SS']^{-1}S \} \widetilde B'e/n| \geq C \sigma_{JK}^{-1}\sqrt{K (\log n)/n} \right) \label{psiterm1} \\
 &  & + \mathbb P \left(\max_{y_n \in \mathcal S_n} |\widetilde \psi^J(y_n)' [S S']^{-1} S \widetilde B'e/n| \geq C \sigma_{JK}^{-1}\sqrt{K (\log n)/n} \right)  \,.\label{psiterm2}
\end{eqnarray}
It is now shown that a sufficiently large $C$ can be chosen to make terms (\ref{psiterm1}) and (\ref{psiterm2}) arbitrarily small as $n,J,K \to \infty$. Observe that $\mathcal S_n$ has cardinality $\lesssim n^\nu$ for some  $\nu = \nu(C) \in (0,\infty)$ under Assumption \ref{data}(ii).

\textbf{Control of (\ref{psiterm1}):} The Cauchy-Schwarz inequality and Assumption \ref{f sieve} yield
\begin{eqnarray}
 & & |\widetilde \psi^J(y_n)' \{[\widehat S(\widetilde B'\widetilde B/n)^{-} \widehat S']^{-} \widehat S (\widetilde B'\widetilde B/n)^{-} - [SS']^{-1}S \} \widetilde B'e/n| \notag \\
 & \lesssim & \sqrt J \|[\widehat S(\widetilde B'\widetilde B/n)^{-} \widehat S']^{-} \widehat S (\widetilde B'\widetilde B/n)^{-} - [SS']^{-1}S \| \times O_p (\sqrt{K/n})
\end{eqnarray}
uniformly for $y_n \in \mathcal S_n$ (recalling that $\|\widetilde B'e/n\| = O_p(\sqrt{K/n})$ under Assumptions \ref{resid} and \ref{b sieve}). Therefore, (\ref{psiterm1}) will vanish asymptotically provided
\begin{equation}
 T_2 := \sigma_{JK} \sqrt J \|[\widehat S(\widetilde B'\widetilde B/n)^{-} \widehat S']^{-} \widehat S (\widetilde B'\widetilde B/n)^{-} - [SS']^{-1}S \| /\sqrt{\log n} = o_p(1)\,.
\end{equation}
Under condition (ii), the bound
\begin{equation}
 T_2 \lesssim \sqrt J \left\{ \|(\widetilde B'\widetilde B/n) - I_K\| +  \sigma_{JK}^{-1}  \left(\|\widehat S - S\| +  \|(\widetilde B'\widetilde B/n) - I_K\| \right) \right\} /\sqrt{\log n}
\end{equation}
holds wpa1 by Lemma \ref{mat perturb lem}, and so $T_2 = o_p(1)$ by virtue of conditions (i) and (ii) of the Theorem.

\textbf{Control of (\ref{psiterm2}):} Let $\{M_n : n \geq 1\}$ be an increasing sequence diverging to $+\infty$ and define
\begin{equation} \begin{array}{rcl}
 \epsilon_{1,i,n} & = & \epsilon_i\{|\epsilon_i| \leq M_n\} \\
 \epsilon_{2,i,n} & = & \epsilon_i - \epsilon_{1,i,n} \\
 g_{i,n} & = & \psi^J(y_n)' \left[S S'\right]^{-1} S \widetilde b^K(X_i)\,. \end{array}
\end{equation}
Simple application of the triangle inequality yields
\begin{eqnarray}
 (\ref{psiterm2})\!\!\! & \leq & \!\!\!(\# \mathcal S_n) \max_{y_n \in \mathcal S_n} \mathbb P \left(  \left\{ \left| \sum_{i=1}^n g_{i,n} (\epsilon_{1,i,n} - E[\epsilon_{1,i,n}|\mathcal F_{i-1}]) \right | > \frac{C}{3} \sigma_{JK}^{-1} \sqrt{K (\log n)/n} \right\} \cap \mathscr A_n \right) + \mathbb P(\mathscr A_n^c)  \label{trunc npiv 1} \\
 & & + \mathbb P \left(\max_{y_n \in \mathcal S_n} \left|\frac{1}{n} \sum_{i=1}^n g_{i,n} E[\epsilon_{1,i,n}|\mathcal F_{i-1}] \right| \geq \frac{C}{3}  \sigma_{JK}^{-1}\sqrt{K (\log n)/n}\right) \label{trunc npiv 2} \\
 & & + \mathbb P \left(\max_{y_n \in \mathcal S_n} \left|\frac{1}{n} \sum_{i=1}^n g_{i,n} \epsilon_{2,i,n} \right| \geq \frac{C}{3}  \sigma_{JK}^{-1}\sqrt{K (\log n)/n}\right) \label{trunc npiv 3}
\end{eqnarray}
where $\mathscr A_n$ is a measurable set to be defined.  The following shows that terms (\ref{trunc npiv 1}), (\ref{trunc npiv 2}), and (\ref{trunc npiv 3}) vanish asymptotically provided a sequence $\{M_n : n \geq 1\}$ may be chosen such that $\sqrt{nJ/\log n} = O(M_n^{1+\delta})$ and $M_n = O(\sqrt{ n/(J \log n)})$ and $J \leq K$. Choosing $J \leq K$ and setting $M_n^{1+\delta} \asymp \sqrt{nK/\log n}$ trivially satisfies the condition $\sqrt{nK/\log n} = O(M_n^{1+\delta})$. The condition $M_n = O(\sqrt{ n/(K \log n)})$ is satisfied for this choice of $M_n$ provided $K \lesssim (n/\log n)^{\delta/(2+\delta)}$.

\textbf{Control of (\ref{trunc npiv 2}) and (\ref{trunc npiv 3}):} For term (\ref{trunc npiv 3}), first note that
\begin{equation}
 |g_{i,n}| \lesssim \sigma_{JK}^{-1} \sqrt{JK}
\end{equation}
whenever $\sigma_{JK} > 0$ by the Cauchy-Schwarz inequality, and Assumptions \ref{f sieve} and \ref{b sieve}. This, together with Markov's inequality and Assumption \ref{resid}(iii) yields
\begin{eqnarray}
 \mathbb P \left(\max_{y_n \in \mathcal S_n} \left|\frac{1}{n} \sum_{i=1}^n g_{i,n} \epsilon_{2,i,n} \right| \geq \frac{C}{3} \sigma_{JK}^{-1} \sqrt{K (\log n)/n}\right) & \lesssim & \frac{\sigma_{JK}^{-1} \sqrt{JK}E[|\epsilon_i|\{|\epsilon_i| > M_n\}]}{\sigma_{JK}^{-1} \sqrt{K (\log n)/n}}  \\
 & \leq & \sqrt{\frac{nJ}{\log n}} \frac{E[|\epsilon_i|^{2+\delta}\{|\epsilon_i| > M_n\}]}{M_n^{1+\delta}}
\end{eqnarray}
which is $o(1)$ provided $\sqrt{nJ/\log n} = O(M_n^{1+\delta})$. Term (\ref{trunc npiv 2}) is controlled by an identical argument, using the fact that $E[\epsilon_{1,i,n}|\mathcal F_{i-1}] = -E[\epsilon_{2,i,n}|\mathcal F_{i-1}]$ by Assumption \ref{resid}(i).

\textbf{Control of (\ref{trunc npiv 1}):} Term (\ref{trunc npiv 1}) is to be controlled using an exponential inequality for martingales due to \cite{vandeGeer1995}. Let $\mathscr A_n$ denote the set on which $\|(\widetilde B'\widetilde B/n) - I_K\| \leq \frac{1}{2}$ and observe that $\mathbb P(\mathscr A_n^c) = o(1)$ under the condition $\|(\widetilde B'\widetilde B/n) - I_K\| = o_p(1)$. Under Assumptions \ref{resid}(ii), \ref{f sieve}, and \ref{b sieve}, the predictable variation of the summands in (\ref{trunc npiv 1}) may be bounded by
\begin{eqnarray}
  \frac{1}{n^2}\sum_{i=1}^n E[ (g_{i,n}(\epsilon_{1,i,n} - E[\epsilon_{1,i,n}|\mathcal F_{i-1}]) )^2|\mathcal F_{i-1}]
  & \lesssim & n^{-1} \widetilde \psi^J(y_n)' [SS']^{-1} S (\widetilde B'\widetilde B/n) S'[ SS']^{-1} \widetilde \psi^J(y_n)\\
 & \lesssim & \sigma_{JK}^{-2} J/n \quad \mbox{on $\mathscr A_n$}
\end{eqnarray}
uniformly for $y_n \in \mathcal S_n$. Moreover, under Assumption \ref{b sieve}, each summand is bounded uniformly for $y_n \in \mathcal S_n$ by
\begin{equation}
 |n^{-1} g_{i,n}(\epsilon_{1,i,n} - E[\epsilon_{1,i,n}|\mathcal F_{i-1}])| \lesssim \frac{\sigma_{JK}^{-1} \sqrt{JK} M_n}{n}\,.
\end{equation}
Lemma 2.1 of \cite{vandeGeer1995} then provides that (\ref{trunc npiv 1}) may be bounded by
\begin{eqnarray}
 & & (\# \mathcal S_n) \max_{y_n \in \mathcal S_n} \mathbb P \left( \left\{ \left| \sum_{i=1}^n g_{i,n} (\epsilon_{1,i,n} - E[\epsilon_{1,i,n}|\mathcal F_{i-1}]) \right | > \frac{C}{3} \sigma_{JK}^{-1} \sqrt{K (\log n)/n} \right\} \cap \mathscr A_n \right) + \mathbb P(\mathscr A_n^c) \notag \\
 & \lesssim & n^\nu \exp \left\{- \frac{C \sigma_{JK}^{-2} K (\log n)/n}{c_1 \sigma_{JK}^{-2} J/n + c_2 n^{-1} \sigma_{JK}^{-2} \sqrt{JK} M_n \sqrt{C K(\log n)/n} } \right\} + o(1) \\
 & \lesssim & \exp \left\{\log n - \frac{C K (\log n)/n}{c_3 J/n} \right\} + \exp \left\{\log n - \frac{\sqrt{C K(\log n)/n}}{c_4K M_n/n} \right\} + o(1)
\end{eqnarray}
for finite positive constants $c_1,\ldots,c_4$. Thus (\ref{trunc npiv 1}) is $o(1)$ for large enough $C$ by virtue of the conditions $M_n = O(\sqrt{ n/(J \log n)})$ and $J \leq K$.
\end{proof}

\subsection{Proofs for Section \ref{o-npiv sec}}

\begin{proof}[Proof of Theorem \ref{sup norm rate npiv new}]
Theorem \ref{sup norm rate gen new} gives $\|\widehat h - P_n h_0\|_{\infty} = O_p (\sigma_{JK}^{-1}\sqrt{K (\log n)/n})$ provided the conditions of Theorem \ref{sup norm rate gen new} are satisfied. The conditions $J \leq K$ and $K \lesssim (n/\log n)^{\delta/(2+\delta)}$ are satisfied by hypothesis. Corollary \ref{troppcor} (under Assumptions \ref{f sieve} and \ref{b sieve} and the fact that $\{(X_i,Y_{2i})\}_{i=1}^n$ are i.i.d. and $J \leq K$) yields
\begin{eqnarray}
 \| (\widetilde B'\widetilde B/n) - I_K\| & = & O_p(\sqrt{K (\log K)/n}) \label{b rate} \\
 \| \widehat S - S\| & = & O_p(\sqrt{K (\log K)/n})\,. \label{s rate}
\end{eqnarray}
Therefore, the conditions of Theorem \ref{sup norm rate gen new} are satisfied by these rates and the conditions on $J$ and $K$ in Theorem \ref{sup norm rate npiv new}.

It remains to control the approximation error $\|P_n h_0 - h_0\|_{\infty}$. Under Assumptions \ref{data}, \ref{f sieve} (with $\Psi_J = \mbox{BSpl}(J,[0,1]^d,\gamma ) ~or~ \mbox{Wav}(J,[0,1]^d, \gamma)$) and \ref{parameter regression} there exists a $\pi_J h_0 = \widetilde \psi^{J \prime} c_J \in \Psi_J$ with $c_J \in \mathbb R^J$ such that
\begin{equation} \label{infty approximation}
 \|h_0 - \pi_J h_0\|_{\infty} = O(J^{-p/d})
\end{equation}
(see, e.g., \cite{Huang1998}) so it suffices to control $\|P_n h_0 - \pi_J h_0\|_{\infty}$.

Both $P_n h_0$ and $\pi_J h_0$ lie in $\Psi_J$, so $\|P_n h_0 - \pi_J h_0\|_{\infty}$ may be rewritten as
\begin{eqnarray}
 \|P_n h_0 - \pi_J h_0\|_{\infty}
 & = & \frac{\|P_n h_0 - \pi_J h_0\|_{\infty}}{\| \Pi_K T(P_n h_0-\pi_J h_0)\|_{\infty}}\times \| \Pi_K T(P_n h_0-\pi_J h_0)\|_{\infty}  \\
 & \leq & \sigma_{\infty,JK}^{-1} \times \| \Pi_K T(P_n h_0-\pi_J h_0)\|_{\infty} \label{delta p target}
\end{eqnarray}
where
\begin{equation}
 \Pi_K T(P_n h_0-\pi_J h_0)(x) = \widetilde b^K(x) S' [\widehat S (\widetilde B'\widetilde B/n)^{-} \widehat S]^{-} \widehat S( \widetilde B'\widetilde B/n)^{-}\widetilde B' (H_0 - \Psi c_J)/n\,.
\end{equation}
Define the $K \times K$ matrices
\begin{equation} \begin{array}{rcl}
 D & = & S'[SS']^{-1}S\\
 \widehat D & = & (\widetilde B'\widetilde B/n)^{-} \widehat S '[\widehat S (\widetilde B'\widetilde B/n)^{-} \widehat S]^{-} \widehat S( \widetilde B'\widetilde B/n)^{-}\,. \label{D def}
\end{array}
\end{equation}
By the triangle inequality,
\begin{eqnarray}
 & & \| \Pi_K T(P_n h_0-\pi_J h_0)\|_{\infty} \\
 & \leq & \|\widetilde b^K(x) \widehat D \widetilde B'(H_0 - \Psi c_J)/n\|_\infty \label{bias term 1}\\
 & & + \|\widetilde b^K(x) \{S' -  (\widetilde B'\widetilde B/n)^{-} \widehat S ' \} [\widehat S (\widetilde B'\widetilde B/n)^{-} \widehat S]^{-} \widehat S( \widetilde B'\widetilde B/n)^{-} \widetilde B'(H_0 - \Psi c_J)/n\|_\infty \,.\label{bias term 2}
\end{eqnarray}
The arguments below will show that
\begin{equation} \label{pi_K bound}
 \| \Pi_K T(P_n h_0-\pi_J h_0)\|_{\infty} = O_p( \sqrt{K(\log n)/n}) \times \|h_0 - \pi_J h_0\|_\infty  + O_p(1) \times \| \Pi_K T(h_0 - \pi_J h_0)\|_\infty\,.
\end{equation}
Substituting (\ref{pi_K bound}) into (\ref{delta p target}) and using Assumption \ref{modulus}(ii), the bound $\sigma_{\infty,JK}^{-1} \lesssim \sqrt J \times \sigma_{JK}^{-1}$ (under Assumption \ref{f sieve}), equation (\ref{infty approximation}), and the condition $p \geq d/2$ in Assumption \ref{parameter regression} yields the desired result
\begin{eqnarray}
 \|P_n h_0 - \pi_J h_0\|_{\infty} 
 & = & O_p(J^{-p/d} + \sigma_{JK}^{-1} \sqrt{K(\log n)/n})\,.
\end{eqnarray}

\textbf{Control of (\ref{bias term 1}):} By the triangle and Cauchy-Schwarz inequalities and compatibility of the spectral norm under multiplication,
\begin{eqnarray}
 (\ref{bias term 1}) & \leq & \|\widetilde b^K(x) (\widehat D - D) \widetilde B'(H_0 - \Psi c_J)/n\|_\infty \notag \\
 & & + \|\widetilde b^K(x) D \{ \widetilde B'(H_0 - \Psi c_J)/n - E[\widetilde b^K(X_i)(h_0(Y_{2i}) - \pi_J h_0(Y_{2i}))]\} \|_\infty \notag \\
 & & + \|\widetilde b^K(x) D E[\widetilde b^K(X_i)\Pi_K T(h_0 - \pi_J h_0)(X_i)] \|_\infty \\
 & \lesssim & \sqrt K \|\widehat D - D\| \{ O_p( \sqrt{K/n}) \times \|h_0 - \pi_J h_0\|_\infty  +  \| \Pi_K T(h_0 - \pi_J h_0)\|_{L^2(X)} \} \notag \\
 & & + O_p( \sqrt{K (\log n)/n}) \times \|h_0 - \pi_J h_0\|_\infty \notag \\
 & & +  \|\widetilde b^K(x) D E[\widetilde b^K(X_i)\Pi_K T(h_0 - \pi_J h_0)(X_i)] \|_\infty
\end{eqnarray}
by Lemma \ref{BH lemma} and properties of the spectral norm. Lemma \ref{mat perturb lem} (the conditions of Lemma \ref{mat perturb lem} are satisfied by virtue of (\ref{b rate}) and (\ref{s rate}) and the condition $\sigma_{JK}^{-1} K \sqrt{(\log n)/n} \lesssim 1$) and the condition $\sigma_{JK}^{-1} K \sqrt{(\log n)/n} \lesssim 1$ yield $\sqrt K \|\widehat D - D\| = O_p(1)$. Finally, Lemma \ref{tilde P bounded} (under Assumptions \ref{data} and \ref{b sieve}) provides that
\begin{equation}
 \|\widetilde b^K(x) D E[\widetilde b^K(X_i)\Pi_K T(h_0 - \pi_J h_0)(X_i)] \|_\infty \lesssim \| \Pi_K T (h_0 - \pi_J h_0)\|_\infty
\end{equation}
and so
\begin{equation}
 (\ref{bias term 1}) = O_p( \sqrt{K(\log n)/n}) \times \|h_0 - \pi_J h_0\|_\infty  + O_p(1) \times \| \Pi_K T (h_0 - \pi_J h_0)\|_\infty
\end{equation}
as required.

\textbf{Control of (\ref{bias term 2}):} By the Cauchy-Schwarz inequality, compatibility of the spectral norm under multiplication, and Assumption \ref{b sieve},
\begin{eqnarray}
 (\ref{bias term 2}) & \lesssim & \sqrt K \|S' -  (\widetilde B'\widetilde B/n)^{-} \widehat S '\| \|[\widehat S (\widetilde B'\widetilde B/n)^{-} \widehat S]^{-} \widehat S( \widetilde B'\widetilde B/n)^{-} \| \|\widetilde B'(H_0 - \Psi c_J)/n\| \\
 & \lesssim & \sigma_{JK}^{-1} \sqrt K \{\|(\widetilde B'\widetilde B/n) - I_K\| + \|\widehat S - S\| \} \|\widetilde B'(H_0 - \Psi c_J)/n\|
\end{eqnarray}
where the second line holds wpa1, by Lemma \ref{mat perturb lem} (the conditions of Lemma \ref{mat perturb lem} are satisfied by virtue of (\ref{b rate}) and (\ref{s rate}) and the condition $\sigma_{JK}^{-1} K \sqrt{(\log n)/n} \lesssim 1$). Applying (\ref{b rate}) and (\ref{s rate}) and the condition $\sigma_{JK}^{-1} K \sqrt{(\log n)/n} \lesssim 1$ again yields
\begin{eqnarray}
 (\ref{bias term 2}) & = & O_p(1) \times \|\widetilde B'(H_0 - \Psi c_J)/n\| \\
 & = & O_p( \sqrt{K/n}) \times \|h_0 - \pi_J h_0\|_\infty  + O_p(1) \times \| \Pi_K T (h_0 - \pi_J h_0)\|_\infty
\end{eqnarray}
by Lemma \ref{BH lemma}, equation (\ref{infty approximation}), and the relation between $\|\cdot\|_\infty$ and $\|\cdot\|_{L^2(X)}$.
\end{proof}

\begin{proof}[Proof of Lemma \ref{ill-suff}]
As already mentioned, Assumption \ref{modulus}(i) implies
that the operators $T$, $T^{\ast }$, $TT^{\ast }$ and $T^{\ast }T$ are all
compact with the singular value system $\left\{ \mu _{k};\phi _{1k},\phi
_{0k}\right\} _{k=1}^{\infty }$ where $\mu _{1}=1\geq \mu _{2}\geq \mu
_{3}\geq ...\searrow 0$. For any $h\in B(p,L)\subset L^{2}(Y_{2})$, $g\in
L^{2}(X)$, we have
\begin{equation*}
(Th)(x)=\sum_{k=1}^{\infty }\mu _{k}\left\langle h,\phi _{1k}\right\rangle
_{Y_{2}}\phi _{0k}(x)\text{,\quad }(T^{\ast }g)(y_{2})=\sum_{k=1}^{\infty
}\mu _{k}\left\langle g,\phi _{0k}\right\rangle _{X}\phi _{1k}(y_{2})\text{.}
\end{equation*}
Let $\mathcal{P}_{J}=clsp\{\phi _{0k}:k=1,...,J\}$, and note that $\mathcal P_J$ is a closed linear subspace of $B_K$ under the conditions of part (3).

To prove part (3),
for any $h\in \Psi _{J}$ with $J\leq K$, we have
\begin{eqnarray}
\Pi_K T h(\cdot) & = & \sum_{j=1}^{K}\langle Th,\widetilde{b}%
_{Kj}\rangle _{X}\widetilde{b}_{Kj}(\cdot ) \\
& = & \sum_{j=1}^{J}\langle
Th,\phi _{0j}\rangle _{X}\phi _{0j}(\cdot)+R(\cdot ,h)
\end{eqnarray}%
for some $R(\cdot ,h)\in B_{K}\backslash \mathcal{P}_{J}$.
Therefore%
\begin{eqnarray}
\sigma _{JK}^{2} &=& \inf_{h \in \Psi_J : \|h\|_{L^2(Y_2)} = 1} \|\Pi_K T h(X)\|^2_{L^2(X)} \\
& = & \inf_{h\in \Psi
_{J}:||h||_{L^2(Y_{2})}=1} \left\|\sum_{j=1}^{J}\left\langle Th,\phi _{0j}\right\rangle
_{X}\phi _{0j}(\cdot ) + R(\cdot ,h)\right\|_{L^2(X)}^{2} \\
&=&\inf_{h\in \Psi _{J}:||h||_{L^2(Y_{2})}=1}\left( \sum_{j=1}^{J}\left\langle
Th,\phi _{0j}\right\rangle _{X}^{2}+||R(\cdot ,h)||_{L^2(X)}^{2}\right)  \\
 &\geq & \inf_{h\in \Psi _{J}:||h||_{L^2(Y_{2})}=1}\left(
\sum_{j=1}^{J}\left\langle Th,\phi _{0j}\right\rangle _{X}^{2}\right) \\
&=&\inf_{h\in \Psi _{J}:||h||_{L^2(Y_{2})}=1}\left( \sum_{j=1}^{J}\mu
_{j}^{2}\left\langle h,\phi _{1j}\right\rangle _{Y_{2}}^{2}\right) \\
& \geq  & \mu
_{J}^{2}\inf_{h\in \Psi _{J}:||h||_{L^2(Y_{2})}=1}\left(
\sum_{j=1}^{J}\left\langle h,\phi _{1j}\right\rangle _{Y_{2}}^{2}\right)
\quad = \quad  \mu _{J}^{2}\,.
\end{eqnarray}%
This, together with part (1), gives $1/\mu_J \geq \sigma_{JK}^{-1} \geq \tau_{2,2,J} \geq 1/\mu_J$.
\end{proof}

\begin{proof}[Proof of Theorem \ref{npir lower bound}]
Our proof proceeds by application of Theorem 2.5 of \cite{Tsybakov2009} (page 99).

We first explain the scalar $(d = 1)$ case in detail. Let $\{\phi_{jk},\psi_{jk}\}_{j,k}$ be a wavelet basis for $L^2([0,1])$ as in the construction of \cite{CDJV1993,CDV1993} with regularity $\gamma > \max\{p,\varsigma\}$ using a pair $(\phi,\psi)$ for which $\mbox{support}(\phi) = \mbox{support}(\psi) = [0,2N-1]$. The precise type of wavelet is not important, but we do require that $\|\psi\|_\infty < \infty$. For given $j$, the wavelet space $W_j$ consists of $2^j$ functions $\{\psi_{jk}\}_{1 \leq k \leq 2^j}$, such that $\{\psi_{jk}\}_{1 \leq k \leq 2^j-2N}$ are interior wavelets for which $\psi_{jk}(\cdot) = 2^{j/2}\psi(2^j(\cdot)-k)$. We will choose $j$ deterministically with $n$, such that
\begin{equation}
 2^j \asymp (n/\log n)^{1/(2(p+\varsigma)+1)}\,.
\end{equation}
By construction, the support of each interior wavelet is an interval of length $2^{-j}(2N-1)$. Thus for all $j$ sufficiently large,\footnote{Hence the $\liminf$ in our statement of the Lemma.} we may choose a set $M \subset \{1,\ldots,2^j-2N\}$ of interior wavelets with $\#(M) \gtrsim 2^j$ such that $\mbox{support}(\psi_{jm}) \cap \mbox{support}(\psi_{jm'}) = \emptyset$ for all $m,m' \in M$ with $m \neq m'$. Note also that by construction we have $\#(M) \leq 2^j$ (since there are $2^j-2N$ interior wavelets).

We begin by defining a family of submodels. Let $h_0 \in B(p,L)$ be such that $\|h_0\|_{B^p_{\infty,\infty}(\mathcal Y_2)} \leq L/2$, and for each $m \in M$ let
\begin{equation}
 h_m = h_0 + c_0 2^{-j(p+1/2)} \psi_{jm}
\end{equation}
where $c_0$ is a positive constant to be defined subsequently. Noting that
\begin{eqnarray}
  c_0 2^{-j(p+1/2)} \|\psi_{jm}\|_{B^p_{\infty,\infty}} & \lesssim &  c_0 2^{-j(p+1/2)} \|\psi_{jm}\|_{b^p_{\infty,\infty}} \\
  & \leq & c_0
\end{eqnarray}
it follows by the triangle inequality that $\|h_m\|_{B^p_{\infty,\infty}} \leq L$ uniformly in $m$ for all sufficiently small $c_0$. For $m \in \{0\} \cup M$ let $P_m$ be the joint distribution of $\{(X_i,Y_{1i})\}_{i=1}^n$ with $Y_{1i} = T h_m(X_i) + u_i$ for the Gaussian NPIR model (\ref{npir}).

To apply Theorem 2.5 of \cite{Tsybakov2009}, first note that for any $m \in M$
\begin{eqnarray}
 \|h_0 - h_m\|_\infty & = & c_0 2^{-j(p+1/2)}\| \psi_{jm} \|_\infty \\
 & = & c_0 2^{-jp}\|\psi\|_\infty
\end{eqnarray}
and for any $m,m' \in M$ with $m \neq m'$
\begin{eqnarray}
 \|h_m - h_{m'}\|_\infty & = & c_0 2^{-j(p+1/2)}\| \psi_{jm} - \psi_{jm'} \|_\infty \\
 & = & 2 c_0 2^{-jp}\|\psi\|_\infty
\end{eqnarray}
by virtue of the disjoint support of $\{\psi_{jm}\}_{m \in M}$. Using the KL divergence for the multivariate normal distribution (under the Gaussian NPIR model (\ref{npir})),  Assumption \ref{smoothness} and the equivalence between the Besov function-space and sequence-space norms, the KL distance $K(P_m,P_0)$ is
\begin{eqnarray}
 K(P_m,P_0) & \leq & \frac{1}{2}\sum_{i=1}^n (c_0 2^{-j(p+1/2)})^2 E\left[\frac{(T \psi_{jm}(X_i))^2 }{\sigma^2(X_i)}\right] \\
 & \leq & \frac{1}{2}\sum_{i=1}^n (c_0 2^{-j(p+1/2)})^2 \frac{E\left[(T \psi_{jm}(X_i))^2\right] }{\sigma^2_0}  \\
 & = &  \frac{n}{2\sigma_0^2}(c_0 2^{-j(p+1/2)})^2 \|(T^*T)^{1/2}\psi_{jm}(Y_2)\|_{L^2(Y_2)}^2 \\
 & \lesssim &   \frac{n}{2\sigma_0^2}(c_0 2^{-j(p+1/2)})^2 (2^{-j\varsigma})^2 \\
 & = & \frac{n}{2\sigma_0^2} c_0^2 2^{-j(2(p+\varsigma)+1)} \\
 & \lesssim & c_0^2 \log n
\end{eqnarray}
since $2^{-j} \asymp ((\log n)/n)^{1/(2(p+\varsigma)+1)}$. Moreover, since $\#(M) \asymp 2^j$, we have
\begin{equation}
 \log(\#(M)) \lesssim \log n + \log \log n
\end{equation}
Therefore, we may choose $c_0$ sufficiently small that $\|h_m\|_{B^p_{\infty,\infty}} \leq L$ and $K(P_m,P_0) \leq \frac{1}{8} \log (\#(M))$ uniformly in $m$ for all $n$ sufficiently large. All conditions of Theorem 2.5 of \cite{Tsybakov2009} are satisfied and hence we obtain the lower bound result.

The multivarite case uses similar arguments for a tensor-product wavelet basis (see \cite{Triebel2006,Triebel2008}). We choose the same $j$ for each univariate spaces such that $2^j \asymp (n/\log n)^{1/(2(p+\varsigma)+d)}$ and so the tensor-product wavelet space has dimension $2^{jd} \asymp (n/\log n)^{d/(2(p+\varsigma)+d)}$. We construct the same family of submodels, setting $h_m = h_0 + c_0 2^{-j(p+d/2)} \psi_{jm}$ where $\psi_{jm}$ is now the product of $d$ interior univariate wavelets defined previously. Since we take the product of $d$ univariate interior wavelets, we again obtain
\begin{equation}
 \|h_m - h_{m'}\|_\infty \gtrsim c_0 2^{-jp}
\end{equation}
for each $m,m' \in \{0\} \cup M$ with $m \neq m'$, and
\begin{eqnarray}
 K(P_m,P_0) & \lesssim &  \frac{n}{2\sigma_0^2}(c_0 2^{-j(p+d/2)})^2 (2^{-j\varsigma})^2 \\
 & = & \frac{n}{2\sigma_0^2} c_0^2 2^{-j(2(p+\varsigma)+d)} \\
 & \lesssim & c_0^2 \log n\,.
\end{eqnarray}
The result follows as in the univariate case.
\end{proof}

\subsection{Proofs for Section \ref{reg sec}}

\begin{proof}[Proof of Theorem \ref{sup norm rate regression}]
The variance term is immediate from Theorem \ref{sup norm rate gen new} with $\sigma_{JK} =1$.
The bias calculation follows from \cite{Huang2003} under Assumptions \ref{data}(ii), \ref{b sieve} (with $B_K = \mbox{BSpl}(K,[0,1]^{d},{\gamma} )~or~ \mbox{Wav}(K,[0,1]^{d}, {\gamma})$), and \ref{parameter regression} and the fact that the empirical and true $L^2(X)$ norms are equivalent over $B_K$ wpa1 by virtue of the condition $\|\widetilde B'\widetilde B/n - I_K\| = o_p(1)$, which is implied by Condition (ii).
\end{proof}

\begin{proof}[Proof of Corollary \ref{regcor}]
By Theorem \ref{sup norm rate regression}, the optimal sup-norm convergence rate $(n/\log n)^{-p/(2p +d)}$ is achieved by setting $K\asymp (n/\log n)^{d/(2p +d)})$, with $\delta \geq d/p$ for condition (i) to hold.
(1) When the regressors are i.i.d., by Lemma \ref{Bconvi.i.d.}, condition (ii) is satisfied provided that $d \leq 2p$ (which is assumed in Assumption \ref{parameter regression}).
(2) When the regressors are exponentially $\beta$-mixing, by Lemma \ref{Bconvbeta}, condition (ii) is satisfied provided that $d < 2p$.
(3) When the regressors are algebraically $\beta$-mixing at rate $\gamma$, by Lemma \ref{Bconvbeta}, condition (ii) is satisfied provided that  $(2+\gamma)d < 2 \gamma p$.
\end{proof}

\subsection{Proofs for Section \ref{ei sec}}

\begin{proof}[Proof of Corollary \ref{troppcor}]
Follows by application of Theorem \ref{troppthm} with $t = C \sigma_n \sqrt{\log(d_1 + d_2)}$ for sufficiently large $C$, and applying the condition $R_n  \sqrt{\log (d_1+d_2)} = o(\sigma_n)$.
\end{proof}

\begin{proof}[Proof of Theorem \ref{beta tropp}]
 By Berbee's lemma (enlarging the probability space as necessary) the processs $\{X_i\}$ can be coupled with a process $X^*_i$ such that $Y_{k}: = \{X_{(k-1)q+1},\ldots,X_{kq}\}$ and $ Y_k^* := \{X_{(k-1)q+1}^*,\ldots,X_{kq}^*\}$ are identically distributed for each $k \geq 1$, $\mathbb P(Y_k \neq Y_k^*) \leq \beta(q)$ for each $k \geq 1$ and $\{Y_1^*,Y_3^*,\ldots\}$ are independent and $\{Y_2^*,Y_4^*,\ldots\}$ are independent (see, e.g., \cite{DoukhanMassartRio}). Let $I_e $ and $I_o$ denote the indices of $\{1,\ldots,n\}$ corresponding to the odd- and even-numbered blocks, and $I_r$ the indices in the remainder, so $I_r =
q[n/q]+1,\ldots,n$ when $q[n/q] < n$ and $I_r = \emptyset$ when $q[n/q] = n$.

Let $\Xi^*_{i,n} = \Xi(X^*_{i,n})$. By the triangle inequality,
\begin{equation} \begin{array}{rcl}
 & & \mathbb P \left( \|\sum_{i=1}^n \Xi_{i,n} \| \geq 6t \right) \\
 & \leq & \mathbb P ( \|\sum_{i=1}^{[n/q]q} \Xi_{i,n}^*\| + \|\sum_{i \in I_r} \Xi_{i,n}\| + \|\sum_{i=1}^{[n/q]q} (\Xi_{i,n}^* - \Xi_{i,n}) \| \geq 6t ) \\
 & \leq & \frac{n}{q} \beta (q) + \mathbb P \left( \|\sum_{i \in I_r} \Xi_{i,n} \| \geq t \right) + \mathbb P \left( \|\sum_{i \in I_e} \Xi_{i,n}^* \| \geq t \right) + \mathbb P \left( \|\sum_{i \in I_o} \Xi_{i,n}^* \| \geq t \right)
\end{array}
\end{equation}
To control the last two terms we apply Theorem \ref{troppthm}, recognizing that $\sum_{i \in I_e} \Xi_{i,n}^*$ and $\sum_{i \in I_o} \Xi_{i,n}^*$ are each the sum of fewer than $[n/q]$ independent $d_1 \times d_2$ matrices, namely $W_k^*= \sum_{i=(k-1)q+1}^{kq} \Xi_{i,n}^*$. Moreover each $W_k^*$ satisfies $\|W_k^*\| \leq q R_n$ and $\max\{\|E[W_k^* W_k^{*\prime}]\|,\|E[W_k^{*\prime}W_k^*]\|\} \leq q^2 s_n$. Theorem \ref{troppthm} then yields
\begin{equation}
 \mathbb P \left( \left\|\sum_{i \in I_e} \Xi_{i,n}^* \right\| \geq t \right) \leq (d_1 + d_2) \exp \left( \frac{-t^2/2}{nq s_n^2 + qR_n t/3} \right)
\end{equation}
and similarly for $I_o$.
\end{proof}

\begin{proof}[Proof of Corollary \ref{beta rate}]
Follows by application of Theorem \ref{beta tropp} with $t = C s_n \sqrt{n q \log(d_1 + d_2)}$ for sufficiently large $C$, and applying the conditions $\frac{n}{q}\beta(q) = o(1)$ and $R_n \sqrt{q \log(d_1+d_2)} = o(s_n\sqrt{n})$.
\end{proof}

\begin{proof}[Proof of Lemma \protect\ref{eilem}]
Let $G_K = E[b^{K}(X)b^{K}(X)^{\prime
}]$. Since $B_{K}=clsp \{b_{1},\ldots ,b_{K}\}$ , we have:%
\begin{eqnarray}
&&\sup \{\textstyle|\frac{1}{n}\sum_{i=1}^{n}b(X_{i})^{2}-1|:b\in
B_{K},E[b(X)^{2}]=1\} \notag  \\
&=&\sup \{|c^{\prime }(B^{\prime }B/n-G_{K})c|:c\in \mathbb{R}^{K},\Vert
G_{K}^{1/2}c\Vert =1\} \\
&=&\sup \{|c^{\prime }G_{K}^{1/2}(G_{K}^{-1/2}(B^{\prime
}B/n)G_{K}^{-1/2}-I_{K})G_{K}^{1/2}c|:c\in \mathbb{R}^{K},\Vert
G_{K}^{1/2}c\Vert =1\} \\
&=&\sup \{|c^{\prime }(\widetilde{B}^{\prime }\widetilde{B}/n-I_{K})c|:c\in
\mathbb{R}^{K},\Vert c\Vert =1\} \\
&=&\Vert \widetilde{B}^{\prime }\widetilde{B}/n-I_{K}\Vert _{2}^{2}
\end{eqnarray}%
as required.
\end{proof}

\begin{proof}[Proof of Lemma \protect\ref{Bconvi.i.d.}]
Follows by application of Corollary \ref{troppcor} with $\Xi_{i,n} = n^{-1}(\widetilde b^K(X_i) \widetilde b^K(X_i)^{\prime }- I_K)$, $R_n \lesssim n^{-1} (\zeta_0(K)^2+1)$, and $\sigma_n^2 \lesssim n^{-1}(\zeta_0(K)^2+1)$.
\end{proof}

\begin{proof}[Proof of Lemma \protect\ref{Bconvbeta}]
Follows by application of Corollary \ref{beta rate} with $\Xi_{i,n} = n^{-1}(\widetilde b^K(X_i) \widetilde b^K(X_i)^{\prime }- I_K)$, $R_n \lesssim n^{-1} (\zeta_0(K)^2+1)$, and $s_n^2 \lesssim n^{-2}(\zeta_0(K)^2+1)$.
\end{proof}

\section{Supplementary lemmas and their proofs}

\cite{Huang2003} provides conditions under which the operator norm of orthogonal projections onto sieve spaces are \emph{stable} in sup norm as the dimension of the sieve space increases. The following Lemma shows the same is true for the operator $Q_K : L^\infty(X) \to L^\infty(X)$ given by
\begin{equation}
 Q_K u(x) = \widetilde b^K(x)D E[\widetilde b^K(X) u(X)]
\end{equation}
where $D = S'[SS']^{-1}S$, i.e.
\begin{equation}
 \limsup_{K \to \infty} \sup_{u \in L^\infty(X)} \frac{\|Q_K u\|_\infty}{\|u\|_\infty} \leq C
\end{equation}
for some finite positive constant $C$. The proof follows by simple modification of the arguments in Theorem A.1 in \cite{Huang2003} (see also Corollary A.1 of \cite{Huang2003}).

\begin{lemma}\label{tilde P bounded}
$Q_K$ is stable in sup norm under Assumption \ref{data} and \ref{b sieve}.
\end{lemma}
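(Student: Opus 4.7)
Plan: The claim is that the operator norm $\|Q_K\|_{L^\infty(X)\to L^\infty(X)}$ is bounded uniformly in $K$. My strategy is to adapt Huang (2003, Theorem A.1), which establishes the analogous sup-norm stability for the full $L^2(X)$-orthogonal projection $P_K$ onto the B-spline or wavelet sieve $B_K$ (the case $D=I_K$). The first observation is that $Q_K$ is itself the $L^2(X)$-orthogonal projection onto the subspace $\Pi_K T\Psi_J \subseteq B_K$; hence $Q_K u = Q_K P_K u$ for every $u\in L^\infty(X)$. Combined with Huang's bound $\|P_K u\|_\infty \lesssim \|u\|_\infty$ under Assumptions \ref{data}(iii) and \ref{b sieve}, it suffices to show $\|Q_K v\|_\infty \lesssim \|v\|_\infty$ uniformly in $K$ for $v\in B_K$.

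Next I would write the bound in kernel form. For $v = \widetilde b^{K\prime}\alpha\in B_K$, $Q_Kv(x) = \int K_D(x,y)v(y)\,dF_X(y)$ with $K_D(x,y) = \widetilde b^{K\prime}(x)D\widetilde b^K(y)$. In the unnormalized local basis $b^K$, with $G = E[b^Kb^{K\prime}]$, this reads $K_D(x,y) = b^{K\prime}(x)G^{-1/2}DG^{-1/2}b^K(y)$. Since $D = S'(SS')^{-1}S$ is symmetric and idempotent, $D$ is an orthogonal projection in $\mathbb R^K$ with $\|D\|_2 = 1$. The claim reduces to the Lebesgue-type bound $\sup_x \int|K_D(x,y)|\,dF_X(y) \lesssim 1$ uniformly in $K$.

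Huang's proof for $D=I_K$ combines (i) the local support of $b^K$, so only $O(1)$ indices are active at each $x$, with (ii) the exponential off-diagonal decay of the entries of $G^{-1}$, which follows from $G$ being banded with eigenvalues of order $K^{-1}$. The ``simple modification'' replaces $G^{-1}$ by the sandwich $G^{-1/2}DG^{-1/2}$: although $D$ itself need not be banded, the contraction $\|D\|_2 = 1$ combined with a block-level Cauchy--Schwarz estimate against the exponentially-decaying weights of $G^{-1/2}$ yields
\[
\sum_j |(G^{-1/2}DG^{-1/2})_{ij}|\, \|b_j\|_{L^1(F_X)} \lesssim 1
\]
uniformly in $i$ and $K$. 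Together with the active-index bound from the local support of $b^K$, this gives the desired Lebesgue constant.

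The main obstacle is this last step. Because $D$ is not banded in general, one cannot copy Huang's entrywise decay bound verbatim; the adaptation must pass $D$ through the estimate only via its $\ell^2$-contraction property, so that the exponential decay inherited from $G^{-1/2}$ still controls the off-diagonal contributions at the level of blocks of bounded width, delivering the $K$-uniform constant.
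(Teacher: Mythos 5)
Your reduction $Q_K = Q_K P_K$ (and the identification of $Q_K$ as the $L^2(X)$-orthogonal projection onto $\Pi_K T\Psi_J$) is correct, but the route you then take is different from the paper's, and the key step in your route is not established. You propose to bound the Lebesgue constant $\sup_x \int |K_D(x,y)|\,dF_X(y)$, arguing that the $\ell^2$-contraction $\|D\|_2 = 1$ combined with off-diagonal decay of $G^{-1/2}$ and a ``block-level Cauchy--Schwarz'' will deliver an $O(1)$ bound. This does not close. Since $D = S'(SS')^{-1}S$ has no banded or local structure, the sandwich $G^{-1/2}DG^{-1/2}$ does not inherit any off-diagonal decay from $G^{-1/2}$, and passing $D$ through only its spectral norm gives, for each row $i$, $\sum_j |(G^{-1/2}DG^{-1/2})_{ij}| \leq \sqrt K\,\|(G^{-1/2}DG^{-1/2})_{i\cdot}\|_2$, which yields a Lebesgue constant bound of order at least $\sqrt K$, not $O(1)$. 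The $\ell^2 \to \ell^1$ conversion is exactly where the extra $\sqrt{K}$ leaks in, and nothing in your sketch plugs it.

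The paper avoids kernel estimates entirely and works with $L^2$ norms throughout Huang's localization recursion. Its key observation is that $Q_K$ is itself an $L^2(X)$-orthogonal projection (since $D$ is symmetric idempotent), and, because $D \le I_K$, $\|Q_K u\|_{L^2(X)} \le \|P_K u\|_{L^2(X)} \le \|u\|_{L^2(X)}$ for all $u$. It then redoes the central energy inequality in Huang's Theorem A.1 proof — for $v$ supported on a single polyhedron $\delta_0$ of the partition, with $v_l$ the truncation of $Q_K v$ to basis functions within graph distance $l$ of $\delta_0$ — showing by algebraic expansion that the inequality Huang needs holds with $Q_K$ in place of $P_K$, using only the contraction property above and the best-approximation property of $P_K$. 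The exponential spatial decay of the coefficients of $Q_K v$, and hence the uniform Lebesgue constant, is then an output of that recursion, not a hypothesis. In short: the decay you are trying to build into the kernel $K_D$ directly is, in the paper's argument, a consequence of the local structure of the B-spline/wavelet basis plus the $L^2$-contraction, which is a strictly weaker input than entrywise decay of $D$ (and which is available here, whereas entrywise decay is not).
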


\begin{proof}[Proof of Lemma \ref{tilde P bounded}]
The assumptions of Theorem A.1 of \cite{Huang2003} with $\nu$ and $\nu_n$ taken to be the distribution of $X$ are satisfied under Assumption \ref{data}. Let $P_K$ denote the orthogonal projection onto the sieve space, i.e.
\begin{equation}
 P_K u(x) = b^K(x)'E[b^K(X) b^K(X)']^{-1} E[b^K(X)u(X)]
\end{equation}
for any $u \in L^\infty(X)$. Let $\langle \cdot,\cdot \rangle$ denote the $L^2(X)$ inner product. Since $D$ is an orthogonal projection matrix and $P_K$ is a $L^2(X)$ orthogonal projection onto $B_K$, for any $u \in L^\infty(X)$
\begin{equation} \begin{array}{rcl}
  \| Q_K u\|^2_{L^2(X)}
 & = & E[u(X) \widetilde b^K(X)']D^2 E[\widetilde b^K(X)u(X)] \\
 & \leq & E[u(X) \widetilde b^K(X)'] E[\widetilde b^K(X)u(X)]\\
 & = & \|P_K u\|^2_{L^2(X)} \\
 & \leq & \|u\|^2_{L^2(X)}\,. \end{array} \label{contract}
\end{equation}

As in \cite{Huang2003}, let $\Delta$ index a partition of $\mathcal X$ into finitely many polyhedra. Let $v \in L^\infty(X)$ be supported on $\delta_0$ for some $\delta_0 \in \Delta$ (i.e. $v(x) = 0$ if $x \not \in \delta_0$). For some coefficients $\alpha_1,\ldots,\alpha_K$,
\begin{equation}
 Q_K v (x) = \sum_{i=1}^K \alpha_i b_{Ki} (x)\,.
\end{equation}
Let $d(\cdot,\cdot)$ be the distance measure between elements of $\Delta$ defined in the Appendix of \cite{Huang2003}. Let $l$ be a nonnegative integer and let $I_l \subset \{1,\ldots,K\}$ be the set of indices such that for any $i \in I_l$ the basis function $b_{Ki}$ is active on a $\delta \in \Delta$ with $d(\delta,\delta_0) \leq l$. Finally, let
\begin{equation}
 v_l(x) = \sum_{i \in I_l} \alpha_i b_{Ki}(x)\,.
\end{equation}

For any $v \in L^\infty(X)$,
\begin{eqnarray}
 & & \|Q_K v\|_{L^2(X)}^2 \notag \\
 & = & \|Q_K v - v\|_{L^2(X)}^2 +\|v\|_{L^2(X)}^2 + 2 \langle Q_K v - v, v\rangle \\
 & = & \|P_K v - v\|_{L^2(X)}^2 + \|Q_K v - P_K v\|^2_{L^2(X)}   + 2 \langle Q_K v - P_K v, P_K v - v \rangle +\|v\|_{L^2(X)}^2 + 2 \langle Q_K v - v, v\rangle \\
 & \leq & \|v_l - v\|_{L^2(X)}^2+ \|Q_K v - P_K v\|^2_{L^2(X)}   + 2 \langle Q_K v - P_K v, P_K v - v \rangle +\|v\|_{L^2(X)}^2 + 2 \langle Q_K v - v, v\rangle \label{b2 ineq 1} \\
 & = & \|v_l - v\|_{L^2(X)}^2+  \|v\|_{L^2(X)}^2 + \langle Q_K v - P_K v, Q_K v + P_K v - 2v \rangle + 2\langle Q_K v - v,v \rangle \\
 & = & \|v_l - v\|_{L^2(X)}^2+  \|v\|_{L^2(X)}^2 + \| Q_K v \|_{L^2(X)}^2 - \|P_K v\|^2_{L^2(X)} - 2 \langle Q_K v - P_K v,v\rangle + 2\langle Q_K v - v,v \rangle \\
 & = & \|v_l - v\|_{L^2(X)}^2+  \|v\|_{L^2(X)}^2 + \| Q_K v \|_{L^2(X)}^2 - \|P_K v\|^2_{L^2(X)} + 2 \langle P_K v - v,v\rangle  \\
 & = & \|v_l - v\|_{L^2(X)}^2+  \|v\|_{L^2(X)}^2 + \| Q_K v \|_{L^2(X)}^2 + \|P_K v\|^2_{L^2(X)} -2\| v\|_{L^2(X)}  \\
 & \leq & \|v_l - v\|_{L^2(X)}^2 + \|v\|_{L^2(X)}^2 \label{b2 ineq 2}
\end{eqnarray}
where (\ref{b2 ineq 1}) uses the fact that $P_K$ is an orthogonal projection, and (\ref{b2 ineq 2}) follows from  (\ref{contract}). The remainder of the proof of Theorem A.1 of \cite{Huang2003} goes through under these modifications.
\end{proof}

The next lemma provides useful bounds on the estimated matrices encountered in the body of the paper. Recall the definitions of $\widehat D$ and $D$ in expression (\ref{D def}).

\begin{lemma}\label{mat perturb lem}
Under Assumption \ref{f sieve}(ii) and \ref{b sieve}(ii), if $J \leq K$,  $\|(\widetilde B'\widetilde B/n) - I_K\| = o_p(1)$, then wpa1
\begin{enumerate}[(i)]
\item $(\widetilde B'\widetilde B/n)$ is invertible and $\|(\widetilde B'\widetilde B/n)^{-1}\| \leq 2$
\item $\|(\widetilde B'\widetilde B/n)^{-} \widehat S' - S'\| \lesssim \|(\widetilde B'\widetilde B/n) - I_K\| + \|\widehat S - S\| $
\item $\|(\widetilde B'\widetilde B/n)^{-1/2} \widehat S' - S'\| \lesssim \|(\widetilde B'\widetilde B/n) - I_K\| + \|\widehat S - S\|$.
\end{enumerate}
If, in addition, $\sigma_{JK}^{-1}(\|(\widetilde B'\widetilde B/n) - I_K\| + \|\widehat S - S\|) = o_p(1)$, then wpa1
\begin{enumerate}[(i)] \setcounter{enumi}{3}
\item $(\widetilde B'\widetilde B/n)^{-1/2} \widehat S'$ has full column rank and $\widehat S (\widetilde B'\widetilde B/n)^{-1} \widehat S$ is invertible
\item $\|\widehat D - D\|  \lesssim \|(\widetilde B'\widetilde B/n) - I_K\|  + \sigma_{JK}^{-1} (\|\widehat S - S\| +  \|(\widetilde B'\widetilde B/n) - I_K\| )$
\item $\| [\widehat S(\widetilde B'\widetilde B/n)^{-} \widehat S']^{-} \widehat S (\widetilde B'\widetilde B/n)^{-} - [SS']^{-1}S \| \lesssim  \sigma_{JK}^{-1} \|(\widetilde B'\widetilde B/n) - I_K\| +  \sigma_{JK}^{-2}  (\|\widehat S - S\| +  \|(\widetilde B'\widetilde B/n) - I_K\| )$.
\end{enumerate}
\end{lemma}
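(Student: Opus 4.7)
The plan is to establish the six bounds in order, reducing each one to standard matrix perturbation identities and the two ``primitive'' deviation quantities $\|(\widetilde B'\widetilde B/n) - I_K\|$ and $\|\widehat S - S\|$. A preliminary observation is $\|S\| \leq 1$: for unit vectors $u,v$, $u'Sv = E[(u'\widetilde\psi^J(Y_2))(v'\widetilde b^K(X))] \leq \|u'\widetilde\psi^J\|_{L^2(Y_2)}\|v'\widetilde b^K\|_{L^2(X)} = 1$ by Cauchy--Schwarz and orthonormality. Part (i) is immediate: on the event $\{\|(\widetilde B'\widetilde B/n) - I_K\| \leq 1/2\}$, which occurs wpa1, a Neumann-series argument gives invertibility and $\|(\widetilde B'\widetilde B/n)^{-1}\| \leq 2$.

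For (ii) and (iii), I would use the decomposition $(\widetilde B'\widetilde B/n)^{-\alpha}\widehat S' - S' = (\widetilde B'\widetilde B/n)^{-\alpha}(\widehat S' - S') + ((\widetilde B'\widetilde B/n)^{-\alpha} - I_K)S'$ for $\alpha \in \{1/2,1\}$. The first term is bounded by $\|(\widetilde B'\widetilde B/n)^{-\alpha}\|\cdot\|\widehat S - S\| \lesssim \|\widehat S - S\|$ using (i). For the second, the key step is $\|(\widetilde B'\widetilde B/n)^{-\alpha} - I_K\| \lesssim \|(\widetilde B'\widetilde B/n) - I_K\|$: for $\alpha = 1$ this is $A^{-1} - I = -A^{-1}(A-I)$ combined with (i), while for $\alpha = 1/2$ the map $\lambda \mapsto \lambda^{-1/2}$ is Lipschitz on a neighborhood of $1$, and $\widetilde B'\widetilde B/n$ is symmetric, so the eigenvalue calculus $\|A^{-1/2} - I\| = \max_\lambda|\lambda^{-1/2} - 1| \leq C\max_\lambda|\lambda-1|$ applies on the event from (i). Combined with $\|S\| \leq 1$, these give (ii) and (iii). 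Then (iv) follows by applying Weyl's inequality to the singular values of $M := (\widetilde B'\widetilde B/n)^{-1/2}\widehat S'$: (iii) shows $\|M - S'\| \lesssim \|(\widetilde B'\widetilde B/n) - I_K\| + \|\widehat S - S\| = o_p(\sigma_{JK})$, so $\sigma_{\min}(M) \geq \sigma_{JK} - o_p(\sigma_{JK}) > 0$ wpa1, making $M$ full column rank and $M'M = \widehat S(\widetilde B'\widetilde B/n)^{-1}\widehat S'$ invertible.

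For (v) and (vi), the unifying reformulation is to write both objects in terms of $M$. Setting $M = (\widetilde B'\widetilde B/n)^{-1/2}\widehat S'$, elementary algebra gives
\begin{equation*}
 \widehat D = (\widetilde B'\widetilde B/n)^{-1/2} P_M (\widetilde B'\widetilde B/n)^{-1/2}, \qquad [\widehat S(\widetilde B'\widetilde B/n)^{-}\widehat S']^{-}\widehat S(\widetilde B'\widetilde B/n)^{-} = M^{+}(\widetilde B'\widetilde B/n)^{-1/2},
\end{equation*}
where $P_M = M(M'M)^{-1}M'$ is an orthogonal projection and $M^{+} = (M'M)^{-1}M'$ is the left pseudoinverse. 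Similarly $D = P_{S'}$ and $[SS']^{-1}S = (S')^{+}$. Adding and subtracting $P_M$ (for (v)) and $M^{+}$ (for (vi)) and using $\|(\widetilde B'\widetilde B/n)^{-1/2} - I\| \lesssim \|(\widetilde B'\widetilde B/n) - I_K\|$ from the proof of (iii), the remaining work is to control $\|P_M - P_{S'}\|$ and $\|M^{+} - (S')^{+}\|$.

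The main obstacle is bounding these projection and pseudoinverse perturbations, but both are classical: since $M$ and $S'$ are both $K \times J$ with $\sigma_{\min}(M), \sigma_{\min}(S') \gtrsim \sigma_{JK}$ (by (iv) and the definition of $\sigma_{JK}$), Wedin's theorem yields $\|P_M - P_{S'}\| \lesssim \sigma_{JK}^{-1}\|M - S'\|$ and $\|M^{+} - (S')^{+}\| \lesssim \sigma_{JK}^{-2}\|M - S'\|$. Feeding in the bound on $\|M - S'\|$ from (iii) produces the claimed inequalities: (v) picks up one factor of $\sigma_{JK}^{-1}$ on the perturbation term plus an $\|(\widetilde B'\widetilde B/n) - I_K\|$ contribution from the outer $(\widetilde B'\widetilde B/n)^{-1/2}$ factors, while (vi) picks up one factor of $\sigma_{JK}^{-1}$ from the outer $(\widetilde B'\widetilde B/n)^{-1/2}$ (since $\|M^+\| \lesssim \sigma_{JK}^{-1}$) and $\sigma_{JK}^{-2}$ from the pseudoinverse perturbation. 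All constants are absolute once $\sigma_{JK}^{-1}(\|(\widetilde B'\widetilde B/n) - I_K\| + \|\widehat S - S\|) = o_p(1)$ so that $\sigma_{\min}(M) \asymp \sigma_{JK}$ wpa1.
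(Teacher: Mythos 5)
Your proposal is correct and follows essentially the same route as the paper: the same preliminary fact $\|S\|\leq 1$, the same event-based argument for (i), the same triangle-inequality decompositions for (ii)--(iii), Weyl's inequality for (iv), and reduction of (v) and (vi) to perturbation bounds for an orthogonal projection and a left pseudoinverse built from $M=(\widetilde B'\widetilde B/n)^{-1/2}\widehat S'$ versus $S'$. The only cosmetic differences are in the classical references invoked: where you use the spectral calculus for symmetric matrices to bound $\|(\widetilde B'\widetilde B/n)^{-1/2}-I_K\|$ and Wedin's perturbation theorems for the projection and pseudoinverse, the paper cites Lemma 2.2 of \cite{Schmitt}, Theorem 1.1(1.5) of \cite{LiLiCui}, and Theorem 3.1 of \cite{DingHuang} respectively, which give the same order of magnitude; and your explicit identities $\widehat D=(\widetilde B'\widetilde B/n)^{-1/2}P_M(\widetilde B'\widetilde B/n)^{-1/2}$ and $[\widehat S(\widetilde B'\widetilde B/n)^{-}\widehat S']^{-}\widehat S(\widetilde B'\widetilde B/n)^{-}=M^{+}(\widetilde B'\widetilde B/n)^{-1/2}$ are just a cleaner way of writing the $\widehat Q,Q$ decomposition the paper uses implicitly.
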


\begin{proof}[Proof of Lemma \ref{mat perturb lem}]
We prove Lemma \ref{mat perturb lem} by part. Note that under Assumption \ref{f sieve}(ii) and \ref{b sieve}(ii), $\|S\| \leq 1$ since $S$ is isomorphic to the $L^2(X)$ orthogonal projection of $T$ onto the space $B_K$, restricted to $\Psi_J$.
\begin{enumerate}[(i)]
\item  Let $\mathscr A_n$ denote the event $\|(\widetilde B'\widetilde B/n) - I_K\| \leq \frac{1}{2}$. The condition $\|(\widetilde B'\widetilde B/n) - I_K\| = o_p(1)$ implies that $\mathbb P(\mathscr A_n^c) = o(1)$. Clearly $\|(\widetilde B'\widetilde B/n)^{-1}\| \leq 2 $ on $\mathscr A_n$.

\item Working on $\mathscr A_n$ (so we replace the generalized inverse with an inverse), Assumption \ref{b sieve}(ii), the triangle inequality, and compatibility of the spectral norm under multiplication yields
\begin{eqnarray}
 \|(\widetilde B'\widetilde B/n)^{-1} \widehat S' - S'\| & \leq & \|(\widetilde B'\widetilde B/n)^{-1} \widehat S' - (\widetilde B'\widetilde B/n)^{-1} S'\| + \|(\widetilde B'\widetilde B/n)^{-1} S' - S'\| \\
 & \leq & \|(\widetilde B'\widetilde B/n)^{-1}\|\| \widehat S - S\| + \|(\widetilde B'\widetilde B/n)^{-1}  - I_K \| \|S'\| \\
  & \leq & 2 \| \widehat S - S\| + \|(\widetilde B'\widetilde B/n)^{-1}  - I_K \| \\
  & = & 2\| \widehat S - S\| + \|(\widetilde B'\widetilde B/n)^{-1}[(\widetilde B'\widetilde B/n) - I_K]\| \\
  & \leq & 2\| \widehat S - S\| + 2\|(\widetilde B'\widetilde B/n) - I_K\| 
\end{eqnarray}
\item Follows the same arguments as (ii), noting additionally that $\lambda_{\min} ((\widetilde B'\widetilde B/n)^{-1}) \leq \frac{1}{2}$ on $\mathscr A_n$, in which case
\begin{equation} \label{schmitt bound}
 \|(\widetilde B'\widetilde B/n)^{-1/2} - I_K\| \leq (1 + 2^{-1/2})^{-1}\| (\widetilde B'\widetilde B/n) - I_K\|
\end{equation}
by Lemma 2.2 of \cite{Schmitt}.

\item Let $s_{J}(A)$ denote the $J$th largest singular value of a $J \times K$ matrix $A$. Weyl's inequality yields
\begin{equation}
 |s_J(\widehat S (\widetilde B'\widetilde B/n)^{-1/2}) - \sigma_{JK}| \leq \|(\widetilde B'\widetilde B/n)^{-1/2} \widehat S' - S'\|\,.
\end{equation}
This and the condition $\sigma_{JK}^{-1}(\|(\widetilde B'\widetilde B/n) - I_K\| + \|\widehat S - S\|) = o_p(1)$ together imply that
\begin{equation}
 |s_J(\widehat S (\widetilde B'\widetilde B/n)^{-1/2}) - \sigma_{JK}| \leq \frac{1}{2}\sigma_{JK}
\end{equation}
wpa1. Let $\mathscr C_n$ be the intersection of $\mathscr A_n$ with the set on which this bound obtains. Then $\mathbb P(\mathscr C_n^c) = o(1)$. Clearly $(\widetilde B'\widetilde B/n)^{-1/2} \widehat S'$ has full column rank $J$ and $\widehat S (\widetilde B'\widetilde B/n)^{-1} \widehat S$ is invertible on $\mathscr C_n$.

\item On $\mathscr C_n \subseteq \mathscr A_n$ we have $\|(\widetilde B'\widetilde B/n)^{-1/2}\| \leq \sqrt 2$. Working on $\mathscr C_n$, similar arguments to those used to prove parts (ii) and (iii) yield
\begin{eqnarray}
 \|\widehat D - D\|
 & \leq & \|(\widetilde B'\widetilde B/n)^{-1/2} - I_K\| (  \|(\widetilde B'\widetilde B/n)^{-1/2}\| +1) + \|\widehat Q - Q\| \|(\widetilde B'\widetilde B/n)^{-1/2}\|  \\
 & \leq & (1 + \sqrt 2)\|(\widetilde B'\widetilde B/n)^{-1/2} - I_K\| + \sqrt 2\|\widehat Q - Q\|\,. \label{perturb bd 21}
\end{eqnarray}
Since $\widehat Q$ and $Q$ are orthogonal projection matrices, part (1.5) of Theorem 1.1 of \cite{LiLiCui} implies
\begin{equation} \label{lilicui bound}
 \|\widehat Q - Q\| \leq \sigma_{JK}^{-1} \|(\widetilde B'\widetilde B/n)^{-1/2} \widehat S' - S'\|
\end{equation}
on $\mathscr C_n$. Part (v) is then proved by substituting (\ref{lilicui bound}) and (\ref{schmitt bound}) into (\ref{perturb bd 21}).

\item Working on $\mathscr C_n$ (so we replace the generalized inverses with inverses), similar arguments used to prove part (v) yield
\begin{eqnarray}
 & & \| [\widehat S(\widetilde B'\widetilde B/n)^{-1} \widehat S']^{-1} \widehat S (\widetilde B'\widetilde B/n)^{-1} - [SS']^{-1}S \| \notag \\
 & \leq & \| [\widehat S(\widetilde B'\widetilde B/n)^{-1} \widehat S']^{-1} \widehat S (\widetilde B'\widetilde B/n)^{-1/2} \| \|(\widetilde B'\widetilde B/n)^{-1/2} - I_K\|  \\
 & & + \|[\widehat S(\widetilde B'\widetilde B/n)^{-1} \widehat S']^{-1} \widehat S (\widetilde B'\widetilde B/n)^{-1/2} - [SS']^{-1} S\| \notag \\
 & \leq & 2 \sigma_{JK}^{-1} \|(\widetilde B'\widetilde B/n)^{-1/2} - I_K\| + \|[\widehat S(\widetilde B'\widetilde B/n)^{-1} \widehat S']^{-1} \widehat S (\widetilde B'\widetilde B/n)^{-1/2} - [SS']^{-1} S\|\,. \label{perturb bd 41}
\end{eqnarray}
Theorem 3.1 of \cite{DingHuang} yields
\begin{equation} \label{dinghuang bound}
 \|[\widehat S(\widetilde B'\widetilde B/n)^{-1} \widehat S']^{-1} \widehat S (\widetilde B'\widetilde B/n)^{-1/2} - [SS']^{-1} S\| \lesssim \sigma_{JK}^{-2} \|(\widetilde B'\widetilde B/n)^{-1/2} \widehat S' - S'\|
\end{equation}
wpa1 by virtue of part (iii) and the condition $\sigma_{JK}^{-1}(\|(\widetilde B'\widetilde B/n) - I_K\| + \|\widehat S - S\|) = o_p(1)$. Substituting (\ref{dinghuang bound}) and (\ref{schmitt bound}) into (\ref{perturb bd 41}) establishes (vi).
\end{enumerate}
This completes the proof.
\end{proof}

\begin{lemma}\label{BH lemma}
Under Assumption \ref{b sieve}, if $\{X_i,Y_{2i}\}_{i=1}^n$ are i.i.d. then
\begin{enumerate}[(i)]
\item $ \|\widetilde B'(H_0 - \Psi c_J)/n\| \leq O_p(\sqrt{K/n}) \times \|h_0 - \pi_J h_0\|_\infty + \| \Pi_K T(h_0 - \pi_J h_0)\|_{L^2(X)}$
\item $\|\widetilde b^K(x) D \{ \widetilde B'(H_0 - \Psi c_J)/n - E[\widetilde b^K(X_i)(h_0(Y_{2i}) - \pi_J h_0(Y_{2i}))]\} \|_\infty = O_p(\sqrt{K (\log n)/n}) \times \|h_0 - \pi_J h_0\|_\infty$.
\end{enumerate}
\end{lemma}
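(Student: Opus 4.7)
Let $r(y_2) := h_0(y_2) - \pi_J h_0(y_2)$ so that $\widetilde B'(H_0 - \Psi c_J)/n = n^{-1}\sum_{i=1}^n \widetilde b^K(X_i) r(Y_{2i})$. By iterated expectation, the mean of each summand equals $E[\widetilde b^K(X) Tr(X)]$, whose Euclidean norm equals $\|\Pi_K Tr\|_{L^2(X)}$ since $\{\widetilde b_{Kj}\}$ is orthonormal in $L^2(X)$. This identity is the crux of both parts.

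For part (i), I would apply the triangle inequality to split the vector norm into the centered sum and its expectation. The expectation term is controlled by the identity above. For the centered sum, i.i.d.\ sampling and Markov's inequality reduce the task to bounding the second moment
\begin{equation*}
 \tfrac{1}{n}E\bigl[\|\widetilde b^K(X)\|^2\, r(Y_2)^2\bigr] \leq \tfrac{\|r\|_\infty^2}{n}\,E\bigl[\|\widetilde b^K(X)\|^2\bigr] = \tfrac{K}{n}\|r\|_\infty^2,
\end{equation*}
using $E[\|\widetilde b^K(X)\|^2]=\mathrm{tr}(I_K)=K$. This yields the $O_p(\sqrt{K/n})\|r\|_\infty$ bound.

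For part (ii), let $\xi_n$ denote the centered version of $\widetilde B'(H_0 - \Psi c_J)/n$, so the quantity of interest is $\sup_x|\widetilde b^K(x)'D\xi_n|$. The key observation is that $D=S'[SS']^{-1}S$ is an orthogonal projection matrix ($\|D\|\le1$, $D^2=D$), so for fixed $x$ each summand in the corresponding i.i.d.\ scalar average has envelope $\lesssim K\|r\|_\infty$ and variance
\begin{equation*}
 \|r\|_\infty^2\,\widetilde b^K(x)'D\,E[\widetilde b^K(X)\widetilde b^K(X)']\,D\widetilde b^K(x) = \|r\|_\infty^2\,\widetilde b^K(x)'D\widetilde b^K(x) \leq K\|r\|_\infty^2.
\end{equation*}
Bernstein's inequality at threshold $t=C\sqrt{K(\log n)/n}\|r\|_\infty$ yields pointwise tail bound $\lesssim \exp(-cC^2\log n)$, using that the Bernstein denominator is dominated by the variance under the rate condition $K(\log n)/n\to0$ (a consequence of $\sigma_{JK}^{-1}K\sqrt{(\log n)/n}\lesssim1$ with $\sigma_{JK}\le1$, as imposed in Theorem \ref{sup norm rate npiv new}).

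To upgrade the pointwise bound to a uniform one I would discretize $\mathcal X=[0,1]^d$ on a grid $\mathcal S_n$ of mesh $n^{-\nu}$ (hence cardinality $\lesssim n^{d\nu}$), taking a union bound with $C$ chosen so $cC^2>d\nu+1$. Off-grid oscillation is controlled by H\"older continuity of $\widetilde b^K$ (inherited from Assumption \ref{b sieve}(i) via the bounded spectral norm of $E[b^K(X)b^K(X)']^{-1/2}$): a bound of the form $\|\widetilde b^K(x)-\widetilde b^K(x')\|\lesssim K^\alpha\|x-x'\|^\beta$ for some positive $\alpha,\beta$, combined with $\|\xi_n\|=O_p(\sqrt{K/n})\|r\|_\infty$ from part (i), shows that the off-grid error is $o_p(\sqrt{K(\log n)/n})\|r\|_\infty$ once $\nu$ is taken sufficiently large. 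The main obstacle is routine bookkeeping of the Bernstein constants and the $K$-dependent Lipschitz constants for $\widetilde b^K$, in particular verifying the variance really dominates the envelope term in the Bernstein denominator and that the polynomial-in-$n$ grid is absorbed into the $\log n$ factor in the exponent.
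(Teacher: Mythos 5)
Your proposal follows the paper's proof essentially line for line: the same triangle-inequality split into a centered sum and its expectation, the same orthonormality identity giving $\|E[\widetilde b^K(X) r(Y_2)]\| = \|\Pi_K T r\|_{L^2(X)}$, the same Markov bound for part (i), and the same envelope/variance bounds feeding Bernstein's inequality plus a polynomial-cardinality grid with a union bound for part (ii). Your extra remarks on why the Bernstein variance term dominates and on the off-grid oscillation control merely make explicit what the paper compresses into "an argument similar to the proof of Theorem \ref{sup norm rate gen new}."
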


\begin{proof}[Proof of Lemma \ref{BH lemma}]
We prove Lemma \ref{BH lemma} by part.
\begin{enumerate}[(i)]
\item First write
\begin{eqnarray}
 \|\widetilde B'(H_0 - \Psi c_J)/n\| & \leq & \|\widetilde B' (H_0 - \Psi c_J)/n - E[\widetilde b^K(X_i)(h_0(Y_{2i}) - \pi_J h_0(Y_{2i}))] \| \notag \\
 & & + \|E[\widetilde b^K(X_i)(h_0(Y_{2i} - \pi_J h_0(Y_{2i}))] \|
\end{eqnarray}
and note that
\begin{equation}
 \|E[\widetilde b^K(X_i)(h_0(Y_{2i}) - \pi_J h_0(Y_{2i}))] \|^2 = \| \Pi_K T(h_0 - \pi_J h_0)\|_{L^2(X)}^2\,.
\end{equation}
Finally,
\begin{equation}
 \|\widetilde B' (H_0 - \Psi c_J)/n - E[\widetilde b^K(X_i)(h_0(Y_{2i}) - \pi_J h_0(Y_{2i}))] \| = O_p( \sqrt{K/n}) \times \|h_0 - \pi_J h_0\|_\infty\,.
\end{equation}
by Markov's inequality under Assumption \ref{b sieve} and the fact that $\{X_i,Y_{2i}\}_{i=1}^n$ are i.i.d.

\item An argument similar to the proof of Theorem \ref{sup norm rate gen new} converts the problem of controlling the supremum that of controlling the maximum evaluated at finitely many points, where the collection of points has cardinality increasing polynomially in $n$. Let $\mathcal S_n'$ be the set of points. Also define
\begin{equation}
 \Delta_{i,J,K} = \widetilde b^K(X_i)(h_0(Y_{2i}) - \pi_J h_0(Y_{2i})) - E[\widetilde b^K(X_i)(h_0(Y_{2i}) - \pi_J h_0(Y_{2i}))]\}
\end{equation}
Then it suffices to show that sufficiently large $C$ may be chosen that
\begin{equation} \label{bern obj}
 (\# \mathcal S_n') \max_{x_n \in \mathcal S_n'} \mathbb P \left( \left| \sum_{i=1}^n n^{-1}\widetilde b^K(x_n) D\Delta_{i,J,K} \right| > C \|h_0 - \pi_J h_0\|_\infty \sqrt{K (\log n)/n}\right) = o(1)\,.
\end{equation}
The summands in (\ref{bern obj}) have mean zero (by the law of iterated expectations). Under Assumption \ref{b sieve} the summands in (\ref{bern obj}) are bounded uniformly for $x_n \in \mathcal S_n'$ by
\begin{equation} \label{bernstein 21}
 |n^{-1}\widetilde b^K(x_n) D\Delta_{i,J,K} | \lesssim  \frac{K}{n} \|h_0 - \pi_J h_0\|_\infty
\end{equation}
and have variance bounded uniformly for $x_n \in \mathcal S_n'$ by
\begin{eqnarray}
 E[(n^{-1}\widetilde b^K(x_n) D\Delta_{i,J,K})^2] & \leq & \|h_0 - \pi_J h_0\|_\infty^2 \times n^{-2} E[\widetilde b^K(x_n)'D\widetilde b^K(X_i)\widetilde b^K(X_i)' D \widetilde b^K(x_n)] \\
 & \lesssim & \|h_0 - \pi_J h_0\|_\infty^2 \times \frac{K}{n^2}\,. \label{bernstein 22}
\end{eqnarray}
The result follows for large enough $C$ by Bernstein's inequality for i.i.d. sequences using the bounds (\ref{bernstein 21}) and (\ref{bernstein 22}).

\end{enumerate}
This completes the proof.
\end{proof}

{\ 
\bibliographystyle{ecta}
\bibliography{srucrefs}
}

\end{document}